\documentclass[10pt,a4paper]{amsart}

\usepackage[utf8]{inputenc}
\usepackage[T1]{fontenc} %
\usepackage{amscd}
\usepackage{amsmath}
\usepackage{amsthm}
\usepackage{amsfonts}
\usepackage{amssymb}
\usepackage{graphicx}
\usepackage[usenames,dvipsnames,svgnames,table]{xcolor}
\usepackage{color}
\usepackage{float}
\usepackage[all]{xy}

\usepackage[english]{babel}
\usepackage{color}
\usepackage{fancyhdr} \setlength{\headheight}{15pt}

\pagestyle{fancy}

\lhead[Pablo Angulo-Ardoy]{} \rhead[]{\titulin}

\fancypagestyle{plain}{ %

 \fancyhf{} %

}

\newtheorem{theorem}{Theorem}[section]
\newtheorem{prop}[theorem]{Proposition}

\newtheorem{lem}[theorem]{Lemma}
\newtheorem{dfn}[theorem]{Definition}
\newtheorem*{dfn*}{Definition}

\newtheorem{conjecture}[theorem]{Conjecture}
\numberwithin{equation}{section}

\newtheorem{maintheorem}{MAIN THEOREM}

\def\RR{\mathbb{R}}

\def\Hnuno{\mathcal{H}^{n-1}}

\definecolor{grey}{rgb}{0.75,0.75,0.75}
\definecolor{orange}{rgb}{1.0,0.5,0.5}
\definecolor{brown}{rgb}{0.5,0.25,0.0}
\definecolor{pink}{rgb}{1.0,0.5,0.5}
\definecolor{green}{rgb}{0.0,0.533,0.133}
\definecolor{darkred}{rgb}{0.3,0.0,0.0}
\newcommand{\strong}[1]{\textbf{#1}}

\newcommand{\nin}{\not\in}

\newcommand{\tmop}[1]{\ensuremath{\operatorname{#1}}}

\newenvironment{itemizedot}{\begin{itemize}

	}
	{ \end{itemize}}
\newenvironment{itemizeminus}{\begin{itemize}

	}
	{\end{itemize}}

\newenvironment{remark}{\medskip\noindent{\strong{Remark.}}}
\newcommand{\Tone}{T_{p_1}M_1 }

\newcommand{\T}{T_p M}
\newcommand{\e}{\exp_p}
\newcommand{\cleaveq}{q=e_1(x_1)=e_1(x_2)}
\def\NC{\mathcal{NC}}
\newcommand{\SAtwo}{\mathcal{SA}_2}
\begin{document}
\title{Linking curves, sutured manifolds and the Ambrose conjecture for generic 3-manifolds}
\def\titulin{Linking curves, sutured manifolds and the Ambrose conjecture}

\author{Pablo Angulo Ardoy}
\address{ Department of Mathematics, Universidad Aut\'onoma de Madrid}
\email{pablo.angulo@uam.es}
\thanks{The author was partially supported by research grant ERC 301179, and by INEM}

\begin{abstract}
We present a new strategy for proving the {\emph{Ambrose conjecture}}, a global version of the Cartan local lemma.
We introduce the concepts of linking curves, unequivocal sets and sutured manifolds, and show that any sutured manifold satisfies the Ambrose conjecture.
We then prove that the set of sutured Riemannian manifolds contains a residual set of the metrics on a given smooth manifold of dimension $3$.
\end{abstract}

\maketitle

\section{Introduction}

Let ($M_{1} ,g_{1}$) and ($M_{2} ,g_{2}$) be two complete Riemannian manifolds
{\emph{of the same dimension}}, with selected points $p_{1} \in M_{1}$ and
$p_{2} \in M_{2}$. 
Any linear map $L:T_{p_{1}} M_{1} \rightarrow T_{p_{2}} M_{2}$ induces a map between the pointed manifolds ($M_{1} ,p_{1}$) and ($M_{2} ,p_{2}$): 
$\varphi = \exp_{p_2} \circ L \circ ( \exp_{p_1} |_{O_{1}})^{-1}$
defined in $\varphi(O_1)$, for any domain $O_{1} \subset T_{p_{1}} M_{1}$ such that
$e_1|_{O_{1}}$ is injective (for example, if $\exp_{p_1}O_{1}$ is a normal neighborhood of
$p_{1}$).

A classical theorem of E. Cartan {\cite{Cartan51}} identifies a situation where this map is an isometry.

For $x\in T_{p_{1}} M_{1}$, let $\gamma_{1}$ be the geodesic on $M_1$ defined in the interval $[0,1]$, starting at $p_{1}$ with initial speed vector $x$ and $\gamma_{2}$ be the geodesic on $M_2$ starting at $p_{2}$ with initial speed $L(x)$.

Let $P_{\gamma}:T_{p_i}M_i\rightarrow T_{\gamma_i(1)}M_i$ denote parallel transport along a curve $\gamma$.

\begin{dfn}\label{def: parallel translation of curvature}
  The curvature tensors of $(M_{1},p_{1} )$ and $(M_{2} ,p_{2} )$ are $L$-related if and only if for any $x\in T_{p_1}M_1$:

\begin{equation}\label{eqn: parallel transport of curvature along geodesics agrees}
P_{\gamma_1}^\ast(R_{\gamma_1(1)}) = L^\ast P_{\gamma_2}^\ast(R_{\gamma_2(1)}) 
\end{equation}
\end{dfn}

In the definition, $P_{\gamma_i}^\ast(R_{\gamma_i(1)})$ is the pull back of the $(0,4)$ curvature tensor at $\gamma_i(1)\in M_{i}$ by the linear isometry $P_{\gamma_i}$, for $i=1,2$
$$
P_{\gamma_i}^\ast(R_{\gamma_i(1)})(v_1,v_2,v_3,v_4) = R_{\gamma_i(1)}\big(P_{\gamma_i}(v_{1}) ,P_{\gamma_i}(v_{2}) , P_{\gamma_i}(v_{3}), P_{\gamma_i}(v_4)\big)
$$
for any four vectors $v_{1} ,v_{2} , v_{3}, v_{4}$ in $T_{p_{i}} M_{i}$, and $L^\ast$ is used to carry the tensor $P_{\gamma_2}^\ast(R_{\gamma_2(1)})$ from $p_2\in M_2$ to $p_1\in M_1$.

The usual way to express that the curvature tensors of $(M_{1}, p_{1} )$ and $(M_{2}, p_{2} )$ are $L$-related is to say that the {\emph{parallel translation of curvature along corresponding geodesics}} on $M_{1}$ and $M_{2}$ coincides.
This certainly holds if $L$ is the differential of a global isometry between $M_1$ and $M_2$.

\begin{theorem}\label{Cartan's theorem}
  If the curvature tensors of $(M_{1} ,p_{1} )$ and $(M_{2} ,p_{2} )$ are $L$-related, and $\exp_{1}|_{O_{1}}$ is injective for some domain $O_{1} \subset T_{p_{1}} M_{1}$, then $\varphi = \exp_{2} \circ L \circ ( \exp_{1} |_{O_{1}} )^{-1}$ is an isometric immersion.
\end{theorem}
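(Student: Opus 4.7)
The natural approach is via Jacobi fields. Fix $x \in O_1$, set $q = \exp_{p_1}(x)$, and let $\gamma_1,\gamma_2$ be the geodesics as in the statement (so $\gamma_1(1)=q$ and $\gamma_2(1) = \varphi(q)$). To show $\varphi$ is an isometric immersion, I need to prove that $d\varphi_q: T_qM_1 \to T_{\varphi(q)}M_2$ is a linear isometry for every such $q$. Because $\exp_{p_1}|_{O_1}$ is injective (and, to get an immersion, must be a local diffeomorphism at $x$), every $v \in T_qM_1$ arises as $J_1(1)$ for a unique Jacobi field $J_1$ along $\gamma_1$ with $J_1(0)=0$; concretely, choose $w \in T_{p_1}M_1$ with $d(\exp_{p_1})_x(w)=v$ and take $J_1$ to be the variation field of $\gamma_1^s(t) = \exp_{p_1}(t(x+sw))$. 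Since $\varphi \circ \gamma_1^s(t) = \exp_{p_2}(tL(x+sw)) =: \gamma_2^s(t)$, differentiating at $s=0,t=1$ shows $d\varphi_q(v) = J_2(1)$, where $J_2$ is the analogous Jacobi field along $\gamma_2$ with $J_2(0)=0$ and initial derivative $L(w)$ (with $L$ understood to be a linear isometry — the hypothesis needed for the conclusion).

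The plan is then to compare $\|J_1(1)\|$ and $\|J_2(1)\|$ coordinatewise. First I would fix an orthonormal basis $\{e_i\}$ of $T_{p_1}M_1$ and parallel-transport it along $\gamma_1$ to get an orthonormal frame $\{E_1^i(t)\}$; simultaneously, $\{L(e_i)\}$ is an orthonormal basis of $T_{p_2}M_2$ which I parallel-transport along $\gamma_2$ to a frame $\{E_2^i(t)\}$. Writing $J_k(t) = \sum_i a_k^i(t) E_k^i(t)$, the Jacobi equation becomes the linear ODE
\begin{equation*}
(a_k^i)''(t) + \sum_{j} R_k^{ij}(t)\, a_k^j(t) = 0,
\end{equation*}
where the coefficients $R_k^{ij}(t)$ are precisely the components of $P_{\gamma_k|_{[0,t]}}^\ast(R_{\gamma_k(t)})(\cdot,\dot\gamma_k(0),\cdot,\dot\gamma_k(0))$ expressed in the frame $\{e_i\}$ (respectively $\{L(e_i)\}$). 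Equation~\eqref{eqn: parallel transport of curvature along geodesics agrees} applied to the rescaled geodesics $s\mapsto \gamma_k(st)$ identifies these coefficients exactly: $R_1^{ij}(t) = R_2^{ij}(t)$ for all $t \in [0,1]$. Since the initial data $a_k^i(0)=0$ and $(a_k^i)'(0) = \langle w, e_i\rangle$ (the same for $k=1,2$ because $L$ is a linear isometry), uniqueness for linear ODEs yields $a_1^i \equiv a_2^i$. Hence $\|J_1(t)\|^2 = \sum_i (a_1^i(t))^2 = \sum_i (a_2^i(t))^2 = \|J_2(t)\|^2$, and specializing to $t=1$ gives $\|d\varphi_q(v)\| = \|v\|$.

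The main technical point to take care of is the correct packaging of the $L$-relation hypothesis so that it directly supplies the ODE coefficient identity along the \emph{entire} geodesic, not just at its endpoint; this is why it is convenient to reparametrize so that the relation~\eqref{eqn: parallel transport of curvature along geodesics agrees} is read at every intermediate time. Once that bookkeeping is in place the argument is essentially a uniqueness statement for a linear ODE. The rest — that $d\varphi$ is defined on all of $T_qM_1$ and depends smoothly on $q$, so $\varphi$ is an immersion — follows because $(\exp_{p_1}|_{O_1})^{-1}$, $L$, and $\exp_{p_2}$ are all smooth.
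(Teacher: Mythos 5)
Your argument is correct and is exactly the argument the paper invokes: the paper's proof consists of a single citation to Lemma 1.35 of Cheeger--Ebin, whose proof is precisely this Jacobi-field comparison in parallel orthonormal frames, reduced to uniqueness for the resulting linear ODE system. You have also correctly handled the one bookkeeping subtlety (applying the $L$-relation to the rescaled vectors $tx$ so that the ODE coefficients $R_1^{ij}(t)=R_2^{ij}(t)$ agree for all $t$, not just at $t=1$), which is exactly how the cited proof proceeds.
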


\begin{proof}
  The proof of lemma 1.35 in \cite{Cheeger Ebin} works for any domain $O$ such that $\exp_{1}|_{O}$ is injective.
\end{proof}

In 1956 (see \cite{Ambrose}), W. Ambrose proved a global version of the above theorem, but with stronger hypothesis.

A \emph{broken geodesic} is the concatenation of a finite amount of geodesic segments.
The Ambrose's theorem states that if the parallel translation of curvature along \emph{broken geodesics} on $M_{1}$ and $M_{2}$ coincide, and both manifolds are \emph{simply connected},
then the above construction gives a global isometry $\varphi:M_1\rightarrow M_2$ whose differential at $p_1$ is $L$.
It is enough if the hypothesis holds for broken geodesics with only one ``{\emph{elbow}}'' (the reader can find more details in \cite{Cheeger Ebin}).

In \cite{Hicks}, in 1959, the result of W. Ambrose was generalized to parallel transport for affine connections; in \cite{Blumenthal Hebda}, in 1987, to Cartan connections; and in \cite{Pawel Reckziegel}, in 2002, to manifolds of different dimensions.

The Ambrose's theorem has found applications to inverse problems (see \cite{Lassas_etc} and \cite{Kurilev Lassas Uhlmann}).

Ambrose also posed the following conjecture:

\begin{conjecture}[\textsc{Ambrose Conjecture}]\label{ambrose conjecture original version}
  Let ($M_{1} ,p_{1}$) and ($M_{2} ,p_{2}$) be two simply-connected Riemannian manifolds with $L$-related curvature.

  Then there is a global isometry whose tangent at $p_{1}$ is $L$.
\end{conjecture}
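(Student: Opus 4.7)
The natural plan is to build $\varphi:M_1\to M_2$ by the Cartan recipe along radial geodesics from $p_1$: for $q=\exp_{p_1}(x)$, set $\varphi(q)=\exp_{p_2}(Lx)$. Theorem \ref{Cartan's theorem} immediately gives that $\varphi$ is a well-defined isometric immersion on any domain where $\exp_{p_1}$ is injective, in particular on a normal neighborhood of $p_1$, with differential $L$ at $p_1$. The global construction would then consist of extending $\varphi$ coherently across the tangent cut locus in $T_{p_1}M_1$; once that is done, running the same construction with $L^{-1}$ produces an inverse, upgrading $\varphi$ from an isometric immersion to a genuine global isometry.

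The crux is the behaviour of $\varphi$ at a \emph{cleaving point}, a point $q\in M_1$ that is the image under $\exp_{p_1}$ of two distinct tangent vectors $x_1,x_2\in T_{p_1}M_1$. Well-definedness requires $\exp_{p_2}(Lx_1)=\exp_{p_2}(Lx_2)$, and for the differentials to match one needs the parallel transports of curvature along the two corresponding geodesic pairs to agree at $q$. Ambrose's original theorem handles this by strengthening the hypothesis to parallel transport along broken geodesics with one elbow, at which point Cartan's theorem applied in a normal neighborhood containing the elbow forces compatibility. The entire content of the conjecture is that simple connectedness of $M_1$ should already do this work, via a monodromy argument: given two paths from $p_1$ to $q$, a homotopy between them can be subdivided into pieces contained in normal neighborhoods of auxiliary base points, in each of which Cartan's theorem applies, and one patches the resulting local identifications. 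I would try to carry this out by following the endpoint in $M_2$ along such a homotopy and showing that it varies continuously and locally constantly, using simple connectedness to rule out global monodromy.

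The hard part, and the reason the conjecture has resisted proof for decades, is precisely the infinitesimal behaviour at cleaving points. The $L$-related hypothesis controls the Jacobi equation along each individual geodesic, and therefore the derivatives of $\exp_{p_i}$ along that geodesic, but it does not a priori relate the data carried to $q$ along two essentially different minimizing geodesics: the two branches of $\exp_{p_1}$ meeting at $q$ are not linked by any Cartan-type normal neighborhood, so the patching step in the monodromy argument has no direct input to work with. My expectation is that the infinitesimal cleaving compatibility is the sole genuine obstacle, and that any viable attack must introduce auxiliary geometric structures on the cut locus that translate the $L$-relatedness hypothesis into cleaving compatibility. This is exactly the route taken in the present paper, where linking curves and the sutured condition are introduced to encode just enough structure on the cut locus for the monodromy argument to close up; a proof of the conjecture in full generality would presumably require showing either that such structures are always present for a simply connected complete Riemannian manifold, or that they can be replaced by a limiting argument valid without genericity hypotheses. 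I do not see how to remove the genericity in dimensions higher than what this paper treats, and I would regard that removal as the principal obstacle to proving the conjecture as stated.
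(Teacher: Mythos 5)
This statement is a \emph{conjecture}, and the paper offers no proof of it in the generality stated; it proves the conjecture only under the additional hypothesis that $(M_1,p_1)$ is sutured (Main Theorem \ref{main theorem ambrose}) and then shows that sutured metrics form a residual set in dimension $3$ (Main Theorem \ref{main theorem generic}). Your proposal is likewise not a proof, and to your credit you say so explicitly. So the honest verdict is: there is a gap, the gap is the entire content of the conjecture, and you have correctly located it.

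Your diagnosis of where the difficulty sits is accurate and matches the paper's framing. The Cartan construction $\varphi=\exp_{p_2}\circ L\circ(\exp_{p_1}|_{O})^{-1}$ works off the cut locus; since $\mathrm{Cut}\setminus\mathrm{Cleave}$ has codimension at least $2$, the whole problem reduces to showing $e_2(x_1)=e_2(x_2)$ (and the matching of differentials, i.e.\ $I_{x_1}=I_{x_2}$) at cleave points $q=e_1(x_1)=e_1(x_2)$. That is exactly the reduction in Section \ref{subsection: Hebda's approach}, and the paper's machinery of linking curves, tree-formed curves and Lemma \ref{lem: si e_1 o Y es tree formed, sus extremos tienen la misma I_Y(.)} exists precisely to translate the $L$-relatedness hypothesis into this cleave-point compatibility. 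Two refinements to your account are worth noting. First, the paper's global patching is not literally a homotopy/monodromy argument over $M_1$: it constructs a synthesis manifold $M$ as a quotient of $V_1$ by the linking relation, proves the projections $\pi_i$ are local isometries (Theorem \ref{thm: weak synthesis}), and only then invokes completeness (via the sutured hypothesis and Proposition \ref{d is distance-decreasing}) to upgrade them to coverings, with simple connectedness entering only at the very last step. Second, the obstacle in higher dimensions is sharper than you suggest: Theorem \ref{thm: any manifold has a metric without cutLC} shows that the dimension-$2$ device of cut-locus linking curves provably fails for an open set of metrics in dimension $3$, so the linking curves must be allowed to leave the tangent cut locus and pass through the conjugate locus -- which is what forces the genericity hypotheses you identify as the remaining barrier.
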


Ambrose himself was able to prove the conjecture if all the data is analytic.
In 1987, in the paper \cite{Hebda}, James Hebda proved that the conjecture was true for surfaces that satisfy a certain regularity hypothesis, that he was able to prove true in 1994 in \cite{Hebda94}. J.I. Itoh also proved the regularity hypothesis independently in \cite{Itoh96}.
The latest advance came in 2010, after we had started our research on the Ambrose conjecture, when James Hebda proved in \cite{Hebda10} that the conjecture holds if $M_1$ is a \emph{heterogeneous manifold}. Such manifolds are generic.

The strategy of James Hebda in \cite{Hebda} can be rephrased in the following terms: in any Riemannian surface, for any cleave point $q$, there is always a \emph{cut locus linking curve} (see definition \ref{def: cut locus linking curve}) that joins the two minimizing geodesics that reach $q$.
We prove in Theorem \ref{thm: any manifold has a metric without cutLC} that this strategy does not carry over to higher dimension, and present a new strategy towards a proof for the Ambrose conjecture in dimension greater than $2$.

We refer the reader to section \ref{section: unequivocal y linked} for the terms used in the following definition:

\begin{dfn}\label{dfn: sutured manifold}
 A pointed manifold $(M_0,p_0)$ is \strong{sutured} (resp. \emph{strongly sutured}) if and only if for any $x\in T_{p_0}M_0$, there is an unequivocal $y\in T_{p_0}M_0$ with $\|y\|\leq\|x\|$ that is \emph{linked} to $x$ (resp. \emph{strongly linked}).
\end{dfn}

\begin{maintheorem} \label{main theorem ambrose}
 The Ambrose conjecture holds if $(M_{1} ,p_{1} )$ is a sutured manifold.
\end{maintheorem}

Our conjecture is that all manifolds are strongly sutured, but in this paper we only prove it for manifolds whose exponential map has some generic transversality properties (see definition \ref{the set of generic metrics}):

\begin{maintheorem} \label{main theorem generic}
 The set of \emph{strongly sutured} Riemannian metrics on a $3$ dimensional differentiable manifold $M$ contains a residual set of metrics.
\end{maintheorem}

The proof of Main Theorem \ref{main theorem generic} involves several technical difficulties but it is also quite constructive: we build linking curves using the \emph{linking curve algorithm} (although there is a non-deterministic step at which we have to choose one curve that avoids some obstacles).
In theorem \ref{thm: the linking curve algorithm stops in finite time}, we prove that the algorithm always produces a special type of linking curve, starting on any conjugate point.

The proof of the Ambrose conjecture in \cite{Hebda10} certainly works for a generic class of Riemannian manifolds of any dimension, and is shorter than the proof presented here.
However, that proof does not seem to be extendable to arbitrary metrics.
Indeed, the class of Riemannian manifolds for which we prove the Ambrose conjecture is not contained in the corresponding class in \cite{Hebda10}, so this is truly a different approach.
In the last section, we show how the ideas in this paper could be used to complete the proof of the conjecture for all Riemannian manifolds.

For the proof of these results we have introduced some new concepts that we believe are interesting in their own sake, such as \emph{linking curves} and \emph{synthesis manifolds} in section \ref{section: synthesis} or \emph{conjugate descending flow} in section \ref{section: conjugate flow}.

The outline of the paper is as follows:
In section \ref{section: James Hebda's tree formed curves} we interpret the proof in \cite{Hebda82} in our own terms and show why it only works in dimension 2.
In section \ref{section: Affine development and tree formed curves} we study tree-formed curves and prove lemma \ref{Dev_2^-1 o L o Dev_1(u) = v} about the affine development of curves in manifolds with $L$-related curvature.
In section \ref{section: synthesis} we define quasi-continuous linking curves, unequivocal sets and synthesis manifolds, and prove our Main Theorem \ref{main theorem ambrose}.
In section \ref{section: generic metric} we collect useful results about the exponential map for a generic metric.
In section \ref{section: proof of main theorem generic} we define conjugate descending curves, prove that they are \emph{unbeatable}, define finite conjugate linking curves (FCLCs), and prove that they can be built for a generic metric using the \emph{linking curve algorithm}.

The results in this paper are mostly included in the author's thesis \cite{Angulo Tesis}, but they have been reorganized to make it more clear and more general, and a few short but powerful results have been added.
We warn the reader of that document that some definitions have changed with respect to that document.

\subsection{Acknowledgments}
We thank Yanyan Li and Biao Yin, who introduced the author to the Ambrose conjecture.
We also thank Luis Guijarro and James Hebda for their support and suggestions.

\section{Notation}\label{section: Ambrose notation}

$M$ is an arbitrary Riemannian manifold, $p$ a point of $M$, $(M_{1} ,p_{1} )$
and $(M_{2} ,p_{2} )$ are two Riemannian manifolds with $L$-related curvature.

Let $ e_{1}$ stand for $\exp_{p_{1}}$ and $ e_{2}$ for $\exp_{p_{2}} \circ L$.

We denote by $\tmop{Cut}_{p}$, the {\strong{cut locus}} of $M$ with respect to $p$ (see chapter 5 of \cite{Cheeger Ebin} for definitions and basic properties).
Let us define also the {\emph{injectivity set}} $O_{p} \subset \T$, consisting of those vectors $x$ in $\T$ such that $d( \e (tx),p)=t$ for all $0 \leqslant t\leqslant 1$, and let $\tmop{TCut}_{p} = \partial O_{p}$ be the {\emph{tangent cut locus}}.
The set $\tmop{TCut}_{p}$ maps onto $\tmop{Cut}_{p}$ by $\e$.

In our proof, we will make heavy use of a subset of $t_p M$ bigger than the injectivity set, defined as follows.
We define the functions $\lambda_{k} :S_{p_{1}} M_{1} \rightarrow \mathbb{R}$, where $\lambda_k(x)$ is the parameter $t_{\ast}$ for which $t_\ast \cdot x$ is the $k$-th conjugate point along $t \rightarrow \e(tx)$ (counting multiplicities).
These functions were shown to be Lipschitz in \cite{Itoh Tanaka 00}.
In \cite{Castelpietra Rifford}, it was shown that $\lambda_1$ is semiconcave.
Together with L.Guijarro, the author proved in \cite{Nosotros2} that these functions are also Lipschitz in Finsler manifolds.
We define $V_{1}$ as the set of tangent vectors $x$ such that $|x| \leqslant \lambda_{1} (x/|x|$), a set with Lipschitz boundary.
It is well known that $O_{p} \subset V_{1}$.

\section{James Hebda's tree formed curves}\label{section: James Hebda's tree formed curves}

\subsection{Tree formed curves}
Let $\tmop{AC}_{p} (X)$ be the space of absolutely continuous curves in the manifold $M$ starting at $p$, with the topology defined as in {\cite{Hebda}}.

Affine development $\tmop{Dev}_{p} : \tmop{AC}_{p} (M)\rightarrow \tmop{AC}_{0} (T_{p} M)$ for absolutely continuous curves is also defined in that reference, extending the common definition in \cite{Kobayashi Nomizu}.

\emph{Tree-formed} curves are similar to the {\emph{tree-like paths}} of the theory of rough paths (see \cite{Hambly Lyons}), but we will stick to the original definition in \cite{Hebda}.
The model for a tree-formed curve $u:[0,1] \rightarrow M$ is an absolutely continuous curve that factors through a finite topological tree $\Gamma$.
In other words, $u= \bar{u} \circ T$ for some continuous map $\bar{u}:\Gamma\rightarrow M$ and a map $T:[0,1]\rightarrow \Gamma$ with runs through each edge of the tree exactly twice, in opposite directions.
The tree $\Gamma$ is the topological quotient of the unit interval by the map $T$.

The definition also allows for a ``partial identification''.

\begin{dfn}\label{def: tree formed curves}
Let $T:[0,1] \rightarrow \Gamma$ be a quotient map, and $u$ an absolutely continuous curve.
Then $u$ is \strong{tree formed} with respect to $T$ if and only if
\[ \int_{t_{1}}^{t_{2}} \varphi (s)(u' (s))ds=0 \]
for any continuous $1$-form $\varphi$ along $u$ $( \varphi (s) \in
T^{\ast}_{u(s)} M)$ that factors through $\Gamma$ (this means that $T (s_{1} ) =T (s_{2} )$
implies $\varphi (s_{1} ) = \varphi (s_{2} )$), and for any $t_{1}$, $t_{2}$ such
that $T(t_{1} )=T(t_{2} )$.

If $T(0)=T(1)$, we say the curve is {\strong{fully tree-formed}}.
\end{dfn}

If $\Gamma =[0,1]$ and $T$ is the identity, the definition is empty, and we will rather use the definition saying
that a certain curve $u$ is tree-formed with respect to an identification map with $T(t_{1} )=T(t_{2} )$ as another way to say that $u|_{[t_{1} ,t_{2}]}$ is a fully tree-formed curve.

\begin{theorem}[{\cite[Theorem 3.3]{Hebda}}]\label{Hebda3.3}
Tree formedness is preserved by affine development:
\begin{itemize}
 \item If $u\in AC_p(M)$ is tree formed for an identification $T$, then $Dev_p(u)\in AC(T_pM)$ is also tree formed for the same $T$.
 \item If $v\in AC(T_pM)$ is tree formed for an identification $T$, then $Dev_p^{-1}(v)\in AC_p(M)$ is also tree formed for the same $T$.
\end{itemize}
\end{theorem}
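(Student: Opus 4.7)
\emph{Proof sketch.} My plan is to use the defining identity of affine development, $v'(s) = \tau(s)^{-1}(u'(s))$, where $\tau(s)\colon T_pM \to T_{u(s)}M$ denotes parallel transport along $u|_{[0,s]}$, to transfer the integral condition between $u$ and $v$ by pulling 1-forms back along $\tau$. Identifying $T^*_{v(s)}(T_pM)$ with $T_p^*M$ canonically, any continuous 1-form $\psi$ along $v$ with $\psi(s) \in T_p^*M$ gives rise to a 1-form $\varphi$ along $u$ via $\varphi(s) := \tau(s)^{*}\psi(s) \in T^*_{u(s)}M$. A direct calculation yields
\[
\varphi(s)(u'(s)) = \psi(s)(\tau(s)^{-1} u'(s)) = \psi(s)(v'(s)),
\]
so the integrands, and hence the integrals over any subinterval, coincide.

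For the forward direction, I assume $u$ is tree-formed with respect to $T$ and take $\psi$ factoring through $\Gamma$. To conclude $\int_{t_1}^{t_2} \psi(v')\,ds = 0$ whenever $T(t_1)=T(t_2)$, it suffices to show that the associated $\varphi$ also factors through $\Gamma$, and then invoke tree-formedness of $u$. Since $u$ factors through $\Gamma$ (as in the model case) and $\psi(s_1)=\psi(s_2)$ whenever $T(s_1)=T(s_2)$, the required factorization of $\varphi$ reduces to verifying $\tau(s_1)=\tau(s_2)$, equivalently, to the triviality of parallel transport around each tree-formed loop $u|_{[s_1,s_2]}$. The converse direction runs dually: given $\varphi$ along $u$ factoring through $\Gamma$, set $\psi(s) := (\tau(s)^{*})^{-1}\varphi(s)$ and apply the same argument, exploiting that the flat connection on $T_pM$ makes the analogous factorization check for $\psi$ symmetric (and already forces $v(s_1)=v(s_2)$ when $T(s_1)=T(s_2)$, via constant test 1-forms).

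The main obstacle is the lemma that parallel transport along a tree-formed loop is trivial. I would prove it by localizing: in a frame trivializing $TM$ along $u$, parallel transport is a path-ordered exponential of the matrix-valued connection 1-form $\omega$. For a model tree-formed curve $u = \bar{u}\circ T$, the two traversals of each edge of $\Gamma$ are nested and pair up, with the outgoing contribution to the path-ordered exponential cancelled by the incoming one via time-reversal; induction on the edges of $\Gamma$ then yields the identity. For curves that are tree-formed only in the integral sense, one approximates by piecewise-smooth model tree-formed curves. The delicate point is the passage from the scalar integral condition (vanishing of each $\int \omega_{ij}(u')$) to the matrix path-ordered identity, where the combinatorial tree structure, rather than the scalar vanishing alone, is essential.
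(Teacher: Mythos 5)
The paper cites this result to Hebda and gives no internal proof, so the comparison must be against Hebda's argument. Your reduction of the \emph{forward} direction to a single lemma — that parallel transport along a fully tree-formed loop of $u$ is the identity, equivalently that $s\mapsto\tau(s)$ factors through $\Gamma$ — is correct, and it is indeed the mechanism used both in Hebda's Theorem 3.3 and, implicitly, elsewhere in this paper (e.g.\ the simplification of $P_{e_1\circ Y}$ in Lemma \ref{lem: si e_1 o Y es tree formed, sus extremos tienen la misma I_Y(.)}). The identity $\varphi(s)(u'(s)) = \psi(s)(v'(s))$ via $\varphi=\tau^*\psi$ and $v'=\tau^{-1}u'$ is exactly the right accounting.

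There are, however, two genuine gaps. First, your treatment of the \emph{converse} direction does not go through as stated. Writing $\psi(s)=(\tau(s)^*)^{-1}\varphi(s)$, the factorization requirement $\psi(s_1)=\psi(s_2)$ when $T(s_1)=T(s_2)$ again reduces to $\tau(s_1)=\tau(s_2)$, and $\tau$ is parallel transport in $M$ along $u$, not in $T_pM$ along $v$. Flatness of $T_pM$ is irrelevant here: it guarantees nothing about the holonomy of $M$'s connection around loops of $u$. You cannot invoke your key lemma either, since it takes as input that $u$ is tree-formed, which is precisely what you are trying to prove. The converse therefore needs its own argument establishing that $\tau$ factors through $\Gamma$ \emph{from the hypothesis that $v$ is tree-formed}; this is essentially a parallel ODE/bootstrap argument, not a formal dualization, and the constant-$1$-form remark only gives $v(s_1)=v(s_2)$, not what is needed.

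Second, the proof of the key lemma itself is incomplete. The backtracking/induction argument works cleanly for a \emph{model} tree-formed curve $u=\bar u\circ T$, but the definition here is the integral condition, which is strictly more general. The claim that a general tree-formed curve (with a fixed identification $T$) can be approximated by piecewise-smooth model tree-formed curves, in a topology strong enough for $\tau$ to converge, is precisely the nontrivial content; you acknowledge this ("the delicate point") but leave it unresolved. Hebda's own argument does not proceed by such an approximation but by analyzing the path-ordered solution of $\tau'=-\omega(u')\tau$ directly, using the tree-formed integral condition on the Christoffel coefficients together with a nested-interval/Gronwall-type argument to force $\tau\circ T^{-1}$ to be well-defined. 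Without either that mechanism or a proof of the approximation claim, the lemma — and with it both directions of the theorem — remains unproved.
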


\subsection{The proof of the Ambrose conjecture for surfaces by James Hebda}\label{subsection: Hebda's approach}

In this section we give a sketch of the paper {\cite{Hebda}}, which is important for later sections.
The reader can find more details in that paper.

Theorem \ref{Cartan's theorem} shows that $\varphi = \exp_{2} \circ L \circ (
\exp_{1} |_{U_{p_1}} )^{-1}$ is an isometric immersion from $U_{p_{1}} =M_{1}
\setminus \tmop{Cut}_{p_{1}}$ into $M_{2}$. The starting idea is to prove that
whenever a point in $\tmop{Cut}_{p_{1}}$ is reached by two geodesics
$\gamma_{1}$ and $\gamma_{2}$, meaning that $ e_{1} ( \gamma_{1}' (0))=  e_{1} (
\gamma_{2}' (0))$, then $ e_{2} ( \gamma_{1}' (0))=  e_{2} (
\gamma_{2}' (0))$. Then the formula $\varphi (p)=e_{2} (x)$, for any $x \in
(O_{p} \cup \tmop{TCut}_{p} ) \cap e_{1}^{-1} (p)$ gives a well-defined map
$\varphi :M_{1} \rightarrow M_{2}$ that is an isometry at least on
$U_{p_{1}}$.

It is a well-known fact that the cut locus looks specially simple at the {\emph{cleave points}}, for which there are exactly two minimizing geodesics from $p$, and both are non-conjugate (see \cite{Ozols}, for example).
Near a cleave point, the cut locus is a smooth hypersurface. The rest of the cut locus is
more complicated, but we know that $\Hnuno ( \tmop{Cut} \setminus
\tmop{Cleave} )=0$ and, indeed, that $\tmop{Cut} \setminus \tmop{Cleave}$ has
Hausdorff dimension at most $n-2$, for a smooth Riemannian manifold.

An isometric immersion from $M_{1} \setminus A$ into a complete manifold, for any set $A$ such that
$\Hnuno (A)=0$, can be extended to an isometric immersion from $M_{1}$. Thus,
it only remains to show that, for a cleave point $\cleaveq$, we have $e_{2}(x_{1} )=  e_{2} (x_{2} )$.

The way to do this is to find for each cleave point $q$ as above, a curve $Y$ whose image is contained in $\tmop{TCut}_{p_{1}}$ (in the metric space $\tmop{AC} (T_p M$) of absolutely continuous curves) such that $Y(0)=x_{1}$, $Y(1)=x_{2}$, and $ e_{1} \circ Y:[0,1] \rightarrow M_{1}$ is {\emph{fully tree-formed}}.

James Hebda proves in lemma 4.1 of \cite{Hebda} that this implies that $e_2(x_1)=e_2(x_2)$.
We extend that lemma in our lemma \ref{lem: si e_1 o Y es tree formed, sus extremos tienen la misma I_Y(.)}, so that it is simpler to use, and more general.
This is an important concept for this paper:

\begin{dfn}\label{def: linking curve}
  A \strong{linking curve} is an absolutely continuous curve $Y:[0,l]\rightarrow T_pM$ such that $e_1\circ Y$ is a fully tree formed curve.
\end{dfn}

\begin{dfn}\label{def: cut locus linking curve}
 A \strong{cut locus linking curve} is a linking curve $Y$ whose image is contained in the tangent cut locus, so that $e_1\circ Y$ is a fully tree formed curve with image contained in the cut locus.
\end{dfn}

J. Hebda's way to find the cut locus linking curves works only in dimension $2$.
Let $S_{p_{1}} M_{1}$ be the set of unit vectors in $\Tone$ parametrized with a coordinate $\theta$, and define $\rho :S_{p_{1}} M_{1} \rightarrow \mathbb{R}$ as the first cut point along the ray $t \rightarrow tv$ for $t>0$, and $\rho ( \theta)= \infty$ if there is no cut point on the ray.
The tangent cut locus is parametrized by $\theta\rightarrow( \rho ( \theta ), \theta)$, defined on the subset of $S_{p_{1}} M_{1}$ where $\rho$ is finite.
Given a cleave point $\cleaveq$, with $x_{i} =( \rho ( \theta_{i} ), \theta_{i}$), then $\rho$ is finite in at least one of the two arcs in $S_{p_{1}} M_{1}$ that join $\theta_{1}$ and $\theta_{2}$, which we write $[ \theta_{1} , \theta_{2} ]$.
Then the curve $Y(\theta )=( \rho ( \theta ), \theta)$ defined in $[ \theta_{1} , \theta_{2} ]$, satisfies the previous hypothesis.

It is important that $Y$ be absolutely continuous, which follows once it is proved that $\rho$ is.
This was shown independently in {\cite{Hebda94}} and {\cite{Itoh96}}, and later generalized to arbitrary dimension in
{\cite{Itoh Tanaka 00}}.

\subsection{Difficulties to extend the proof to dimension higher than $2$}

In dimension higher than $2$, there is no natural choice for a cut locus linking curve joining the speed vectors of the two minimizing geodesics that reach a cleave point.
Indeed, we can show that for some manifolds it is impossible to do so:

\begin{theorem}\label{thm: any manifold has a metric without cutLC}
Let $M$ be a smooth manifold of dimension $3$, and $p$ a point in $M$.
There is an open subset of the set of smooth Riemannian metrics such that for any cleave point $q=\exp_p(x_1)=\exp_p(x_2)$ from $p$, there is not CutLC whose extrema are $x_1$ and $x_2$.
\end{theorem}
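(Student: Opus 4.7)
My approach is to exhibit an open family of metrics on $M$ for which the two-sheet structure of the cleave locus in $\tmop{TCut}_p$ cannot be bridged by a fully tree-formed projection. Hebda's $2$-dimensional argument (sketched in subsection \ref{subsection: Hebda's approach}) exploits that the tangent cut locus is a $1$-dimensional graph, where any path between cleave pairs is tree-formed after removing backtracking. In dimension $3$, $\tmop{TCut}_p$ is a $2$-complex whose cleave locus consists of two sheets $S_+, S_-$ (the two branches of $e_1$ above each cleave point) exchanged by the cleave involution $\tau$. Any candidate CutLC must bridge these sheets via the singular $1$-skeleton of tricleave or conjugate points, and the plan is to arrange that every such bridging produces a projected loop that is not fully tree-formed.

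I would construct a model metric $g_0$ as follows. In a coordinate chart near $p$, start from a rotationally symmetric warped product whose degenerate cut locus from $p$ is a single conjugate $2$-sphere. Apply a carefully chosen symmetric perturbation that unfolds this sphere into a smooth cleave surface $\Sigma$ bordered by a $1$-dimensional tricleave skeleton; choose the perturbation so that at each tricleave point the three preimages in $T_p M$ are cyclically arranged (the three adjacent cleave-pair involutions cycle through $(1,2)\to(2,3)\to(3,1)$ as one goes around the point), which prevents $\tau$ from extending consistently across the skeleton. Extend $g_0$ to a complete metric on $M$ by magnifying distances outside the chart, so that no external geodesic introduces a shortcut reaching $\Sigma$; the global cut locus from $p$ then coincides with $\Sigma$.

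Suppose toward a contradiction that some cleave pair $(x_1,x_2)$ admits a CutLC $Y:[0,1]\to \tmop{TCut}_p$. Then $Y$ must traverse the tricleave skeleton to pass from $S_+$ to $S_-$. The cyclic structure forces the portion of $Y$ in $S_-$ not to be the $\tau$-image of the portion in $S_+$ reversed, so the loop $u=e_1\circ Y$ does not retrace itself. I would construct a smooth $1$-form $\alpha_0$ on $M$, supported in a tubular neighborhood of the tricleave arc traversed by $Y$ and arranged to detect the cyclic twist, so that $\int_u\alpha_0$ is bounded below by a strictly positive constant depending only on $g_0$. On the other hand, if $u$ is fully tree-formed with respect to some identification $T:[0,1]\to \Gamma$, then for every smooth $1$-form $\alpha$ on $M$ the pullback $u^\ast\alpha$ factors through $\Gamma$ (since $u(s_1)=u(s_2)$ whenever $T(s_1)=T(s_2)$), and the integral condition applied with $t_1=0$, $t_2=1$ (using $T(0)=T(1)$) forces $\int_u\alpha=0$. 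Taking $\alpha=\alpha_0$ yields a contradiction.

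Openness in the $C^2$ topology follows from Lipschitz regularity of the first conjugate function $\lambda_1$ (\cite{Itoh Tanaka 00}, \cite{Castelpietra Rifford}) and smooth dependence of the exponential map on the metric, so that the sheet decomposition of $\tmop{TCut}_p$, the tricleave skeleton, the cyclic arrangement of preimages, and the strict lower bound on $\int_u\alpha_0$ all persist under small perturbations of $g_0$. The main obstacle is engineering the tricleave skeleton with a stably cyclic arrangement of preimages and verifying the quantitative lower bound on $\int_u\alpha_0$ uniformly over all candidate paths $Y$ and all cleave pairs --- this is a genuinely $3$-dimensional effect, since in dimension $2$ the singular skeleton is $0$-dimensional and the cyclic obstruction has no analog.
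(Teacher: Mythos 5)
Your approach is genuinely different from the paper's and, unfortunately, contains gaps that are not easily filled. The paper's argument is much more economical and rests on a completely different observation: it invokes Weinstein's theorem (\cite{MR0221434}) to produce a metric whose tangent cut locus from $p$ contains \emph{no conjugate points}, and then shows that for such a metric no CutLC can exist at all. The mechanism is a local self-intersection analysis: parametrize $u=e_1\circ Y$ by arc length, look at the midpoint $t_\ast$, and observe that tree-formedness forces identifications $T(t_n)=T(r_n)$ accumulating at $t_\ast$ (otherwise $u'$ would vanish on an interval). Either these identifications cluster at $t_\ast$ from both sides, producing a pair of distinct preimages of $e_1$ arbitrarily close to $Y(t_\ast)$ and hence a conjugate point there; or $t_\ast$ itself is identified with some $t_0$, so $Y|_{[t_0,t_\ast]}$ is a shorter CutLC and one iterates, producing nested intervals shrinking to a point that is again conjugate. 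Either way, the disjointness of cut and conjugate loci is violated, so no CutLC exists. Openness follows from Lipschitz continuity of the cut and conjugate distance functions.

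Your proposal misses this route and instead tries to manufacture a metric whose cleave surface has a tricleave skeleton with a ``cyclic'' arrangement of preimages, then detect the cyclic twist with a smooth $1$-form $\alpha_0$. The observation that a fully tree-formed loop $u$ must satisfy $\int_u\alpha=0$ for every smooth $1$-form $\alpha$ is correct and is indeed a strong constraint, but the rest is not established. Concretely: (i) you never prove that a metric with the described cyclic tricleave structure exists, nor that the global cut locus equals the model surface $\Sigma$ after the magnification outside the chart; (ii) the assertion that the cyclic preimage arrangement prevents \emph{every} CutLC $Y$ (which may wind around or cross the skeleton many times, backtrack, or pass through conjugate points) from projecting to a loop with $\int_u\alpha_0=0$ is precisely the content you would need to prove, and a single $1$-form $\alpha_0$ giving a uniform positive lower bound over all candidate $Y$ and all cleave pairs is a strong quantitative claim with no argument behind it; (iii) openness is asserted but would require controlling not just $\lambda_1$ and the exponential map but the persistence of the tricleave skeleton and its cyclic combinatorics, which is far from automatic. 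You flag item (ii) yourself as ``the main obstacle,'' and indeed it is: as written this is a program, not a proof. The paper's key idea --- that disjoint cut and conjugate loci already kill every CutLC, with no need for special combinatorics of the cleave surface --- is missing from your approach and is what makes the theorem accessible.
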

\begin{proof}

Using the main theorem in {\cite{MR0221434}}, there is a metric $g_1$ on $M$ whose tangent cut locus from $p$ does not contain conjugate points (in other words, all segments with $P$ as one endpoint are non-conjugate).
Any metric sufficiently close to $g_1$ will also have disjoint cut and conjugate locus.

Let $q=\exp_p(x_1)=\exp_p(x_2)$ be a cleave point and $Y:[0,L]\rightarrow T_{p_1}M$ be a CutLC joining $x_{1}$ and $x_{2}$, where $T:[0,1]\rightarrow\Gamma$ is the identification map of $\exp_p\circ Y$.

We can change the parameter $t:[0,l]\rightarrow [0,L]$ so that $(u\circ t)(s)$ has unit speed (the identification is reparametrized accordingly $(T\circ t)(s)$).
We simply assume that the speed vector of $u$ has norm one wherever it is defined and keep the letter $t$ for the parameter.

Let $t_\ast=L/2$
The following possibilities may occur:
\begin{enumerate}
 \item There is some $t_0\neq t_\ast$ such that $T(t_0)=T(t_\ast)$.
 \item There is some $\varepsilon>0$ such that any point in $[t_\ast-\varepsilon, t_\ast+\varepsilon]$ is not identified to other points by $T$.
 \item There is a sequence $t_n\rightarrow t_\ast$ and a sequence $r_n\rightarrow t_\ast$ such that $r_n\neq t_n$ and $T(r_n)=T(t_n)$
 \item There is some $\varepsilon>0$, a sequence $t_n\rightarrow t_\ast$ and a sequence $r_n$ such that $|r_n-t_\ast|>\varepsilon$ and $T(r_n)=T(t_n)$.
\end{enumerate}

The second option is in contradiction with the hypothesis.
The reason is that for any continuous $1$-form $\varphi$ along $u|_{[t_\ast-\varepsilon, t_\ast+\varepsilon]}$, we must have
\[
  \int_{0}^{1} \varphi (s)(u' (s))ds=\int_{t_\ast-\varepsilon}^{t_\ast+\varepsilon} \varphi(s)(u' (s))ds= 0
\]
but since $T$ does not identify points in $[t_\ast-\varepsilon, t_\ast+\varepsilon]$ to other points, we can choose the continuous $1$-form $\varphi|_{[t_\ast-\varepsilon/2, t_\ast+\varepsilon/2]}$ freely, and this implies that $u'$ is null on that interval, which is in contradiction with having unit speed.

The fourth option implies the first, since a subsequence of the $r_n$ will converge to some $r_0$ which is not in $(t_\ast-\varepsilon, t_\ast+\varepsilon)$ and so is different from $t_\ast$.

If the third option holds, since $T(r_n)=T(t_n)$ implies $u(r_n)=u(t_n)$, or $\e(Y(t_n))=\e(Y(r_n))$, any neighborhood of $Y(t_\ast)$ contains a pair of different points with the same image, which implies that $\e$ is not a local diffeomorphism at $Y(t_\ast)$, in contradiction with the fact that the image of $Y$ is contained in the tangent cut locus, which does not contain conjugate points.

Only the first option remains.
In this case, it follows from definition \ref{def: tree formed curves} that the curve $Y|_{[t_0,t_\ast]}$ is tree formed, for the identification $T|_{[t_0,t_\ast]}$ (if $t_\ast<t_0$, we restrict to $[t_\ast,t_0] $).
The length of $[t_0,t_\ast]$ is smaller than $L/2$ and $Y|_{[t_0,t_\ast]}$ is also a CutLC.
We can iterate the argument to get a sequence of nested closed intervals whose length decreases to $0$.
The point in the intersection of that sequence is a conjugate point, by a similar argument as in the third option above, and this is again a contradiction.

\end{proof}

\section{Affine development and tree formed curves}\label{section: Affine development and tree formed curves}

In this section we extend the main results of sections 3 and 4 in \cite{Hebda}.

For this whole section, let $(M_1, p_1)$ and $(M_2, p_2)$ be two manifolds with $L$-related curvature.

\begin{dfn}
  
  The {\emph{local linear isometry induced by $x\in T_{p_1}M_1$}} is defined by
$$
I_x = P_{\gamma_2}\circ L \circ P_{-\gamma_1}
$$
  where $\gamma_1$ is the geodesic on $M_1$ with $\gamma_1(0)=x$, $\gamma_2$ is the geodesic on $M_2$ with $\gamma_2(0)=L(x)$ and $P_{\alpha}$ is the parallel transport along the curve $\alpha$.
\end{dfn}

\begin{remark}
 Since parallel transport along $\gamma\in AC_p(M)$ depends continuously on $\gamma$ (see \cite[6.1,6.3]{Hebda}), the map $x\rightarrow I_x$ is continuous.
\end{remark}

\begin{lem}\label{lem: I_X cuando X es punto regular}
  Let $x \in T_{p_1}M_1$ be a regular point of $e_1$, and $O$ any neighborhood of $x$ in $T_{p_1}M_1$ such that $e_1 |_{O}$ is injective.
  Let $f_x$ be the local isometry $e_2 \circ L \circ (e_1 |_{O_1})^{- 1}$ from $e_1 (O)$ in $M_1$ to $e_2 (O)$ in $M_2$.
  Then 
  $$I_x = d_{e_1(x)} f_x$$%
\end{lem}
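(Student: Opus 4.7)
The plan is a standard Jacobi-field computation in which the $L$-relatedness of curvature is used to identify the Jacobi equations along the corresponding geodesics $\gamma_1$ and $\gamma_2$; this is essentially the linearized content of lemma 1.35 in \cite{Cheeger Ebin}, but I would make the intertwining map with $I_x$ explicit. Since $x$ is a regular point of $e_1$, the differential $d_x e_1 : T_{p_1}M_1 \to T_{e_1(x)}M_1$ is a linear isomorphism, so for any $w\in T_{e_1(x)}M_1$ there is a unique $v\in T_{p_1}M_1$ with $d_x e_1(v)=w$. By the classical identification of the differential of the exponential map with endpoint evaluation of Jacobi fields, $w=J_1(1)$, where $J_1$ is the Jacobi field along $\gamma_1$ with $J_1(0)=0$ and $J_1'(0)=v$. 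Using $e_2=\exp_{p_2}\circ L$,
\[
d_{e_1(x)}f_x(w)=d_x e_2(v)=d_{L(x)}\exp_{p_2}(L(v))=J_2(1),
\]
where $J_2$ is the Jacobi field along $\gamma_2$ with $J_2(0)=0$ and $J_2'(0)=L(v)$. The statement therefore reduces to proving $I_x(J_1(1))=J_2(1)$.

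Next, I would trivialize the tangent bundle along each geodesic by parallel transport. Set $\tilde J_i(t):=P^{-1}_{\gamma_i|_{[0,t]}}(J_i(t))\in T_{p_i}M_i$, so that $\tilde J_i$ is a curve in the fixed vector space $T_{p_i}M_i$ with $\tilde J_i(0)=0$ and $\tilde J_i'(0)=J_i'(0)$. The Jacobi equation $J_i''+R(J_i,\dot\gamma_i)\dot\gamma_i=0$ then becomes the linear ODE
\[
\tilde J_i''(t)+\tilde R_i(t)\,\tilde J_i(t)=0,
\]
where $\tilde R_i(t):T_{p_i}M_i\to T_{p_i}M_i$ is the $(1,3)$-curvature operator $Y\mapsto R(Y,\dot\gamma_i(t))\dot\gamma_i(t)$ at $\gamma_i(t)$ pulled back to $p_i$ by $P_{\gamma_i|_{[0,t]}}$.

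The key step is to invoke Definition \ref{def: parallel translation of curvature} with the vector $tx$ for every $t\in[0,1]$: the geodesic generated by $tx$ is a reparametrization of $\gamma_1|_{[0,t]}$, and parallel transport is reparametrization-invariant, so the identity in Definition \ref{def: parallel translation of curvature} holds at each pair $(\gamma_1(t),\gamma_2(t))$. Because $L$ is a linear isometry and parallel transport preserves the metric, the $(0,4)$-identity transfers to the $(1,3)$-operators and yields the intertwining
\[
L\circ\tilde R_1(t)=\tilde R_2(t)\circ L,\qquad t\in[0,1].
\]
Applying $L$ to the first Jacobi ODE then shows that $L\circ\tilde J_1$ solves the second Jacobi ODE with initial data $(0,Lv)$, so by uniqueness of solutions $\tilde J_2=L\circ\tilde J_1$. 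Evaluating at $t=1$ and transporting forward along $\gamma_2$ gives
\[
J_2(1)=P_{\gamma_2}\bigl(L(\tilde J_1(1))\bigr)=P_{\gamma_2}\circ L\circ P_{-\gamma_1}\bigl(J_1(1)\bigr)=I_x(w),
\]
which is the claimed equality $d_{e_1(x)}f_x=I_x$. I do not foresee a substantive obstacle; the only point requiring care is the routine passage from the $(0,4)$ form of $L$-relatedness to the $(1,3)$ form that appears in the Jacobi ODE.
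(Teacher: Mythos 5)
Your proposal is correct and is essentially the paper's approach: the paper simply cites Lemma~1.35 of Cheeger--Ebin, whose proof is precisely the Jacobi-field/parallel-transport trivialization argument you spell out, and your computation makes explicit what that citation leaves implicit, namely that the intertwining of Jacobi equations forces $d_{e_1(x)}f_x = P_{\gamma_2}\circ L\circ P_{-\gamma_1} = I_x$. The one point you flag as needing care --- passing from the $(0,4)$ form of $L$-relatedness at each $t\in[0,1]$ (via the vectors $tx$) to the $(1,3)$ Jacobi operator intertwining $L\circ\tilde R_1(t)=\tilde R_2(t)\circ L$ --- is indeed routine since $L$ and the parallel transports are linear isometries, so there is no gap.
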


\begin{proof}
  See lemma 1.35 in \cite{Cheeger Ebin}
\end{proof}

We define $\tmop{Dev}_i : \tmop{AC} (M_i) \rightarrow \tmop{AC} (T_{p_i} M_i)$ as the affine development of absolutely continuous curves in $M_i$ based at $p_i$, for $i = 1, 2$.

\begin{lem}\label{Dev_2^-1 o L o Dev_1(u) = v}
  Let $Y : [0, l] \rightarrow V_1$ be an absolutely continuous curve such that $Y (0) = 0$.
  Then:
\begin{enumerate}
 \item $I_{Y(l)} = P_{v} \circ L \circ P_{-u}$.
 \item At any point $t$ where $u'(t)$ and $v'(t)$ are defined, $I_{Y(t)}(u'(t))=v'(t)$.
 \item $v = (\tmop{Dev}_2)^{- 1} \circ L \circ \tmop{Dev}_1 (u) $.
\end{enumerate}
\end{lem}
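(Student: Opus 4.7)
The three statements fit together naturally: writing $u = e_1 \circ Y$ and $v = e_2 \circ Y$, (2) is the infinitesimal form of (1), while (3) is the integrated form via the defining ODE of affine development. So the plan is to establish (2), bootstrap it to (1), and then deduce (3).

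For (2), I would work first at a $t$ with $Y(t)$ in the interior of $V_1$, so that $e_1$ is a local diffeomorphism near $Y(t)$. On a sufficiently small neighborhood $O$ on which $e_1$ is injective, Cartan's Theorem \ref{Cartan's theorem} gives a local isometric immersion $f_{Y(t)} = e_2 \circ (e_1|_O)^{-1}$, and by construction $v(s) = f_{Y(t)}(u(s))$ for all $s$ near $t$ with $Y(s)\in O$. Lemma \ref{lem: I_X cuando X es punto regular} identifies its differential at $u(t)$ with $I_{Y(t)}$, so the chain rule yields $v'(t) = I_{Y(t)}(u'(t))$ at every such $t$ where $u'(t)$ exists.

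For (1), the same local isometry $f_{Y(t)}$ intertwines parallel transport, so for $s$ near $t$ we get the cocycle relation $P_{v|_{[s,t]}} \circ I_{Y(s)} = I_{Y(t)} \circ P_{u|_{[s,t]}}$. Setting $A(t) := P_{v|_{[0,t]}} \circ L \circ P_{-u|_{[0,t]}}$ and using $A(0) = I_{Y(0)} = L$ (trivially, since $Y(0)=0$ makes both $\gamma_1,\gamma_2$ constant), I would run an open-closed argument on $\{t : A(t) = I_{Y(t)}\}$: closedness follows from the continuity of parallel transport along AC curves together with the continuity of $x \mapsto I_x$ noted in the remark after its definition, while openness at any $t_0$ with $Y(t_0)$ regular follows at once from the cocycle relation. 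Part (3) then drops out: the curve $w := \tmop{Dev}_2^{-1} \circ L \circ \tmop{Dev}_1(u)$ is characterised by $w(0) = p_2$ together with $w'(t) = P_{w|_{[0,t]}}\bigl(L\bigl(P_{-u|_{[0,t]}}(u'(t))\bigr)\bigr)$, which (2) identifies with $v'(t)$, and since $v(0) = p_2$ uniqueness forces $w = v$.

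The main technical obstacle will be values of $t$ where $Y(t)$ lies on $\partial V_1$, i.e.\ on the first conjugate set, at which $e_1$ is not a local diffeomorphism and the local isometry $f_{Y(t)}$ is not directly available. However, $I_{Y(t)}$ is defined purely through parallel transport along the radial geodesics $\gamma_1,\gamma_2$ and is therefore continuous on all of $T_{p_1}M_1$; together with the continuity of parallel transport along $u$ and $v$ in the AC topology, this makes $A$ continuous in $t$ and lets the closedness half of the open-closed argument carry the identity $A(t)=I_{Y(t)}$ across boundary times. A minor bookkeeping point is that $u'$ and $v'$ need only exist almost everywhere, but this is absorbed both in the integrated statement (1) and in the ODE characterisation used to derive (3).
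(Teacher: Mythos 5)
Your overall structure (open–closed argument on the set $J=\{t:A(t)=I_{Y(t)}\}$, where $A(t)=P_{v|_{[0,t]}}\circ L\circ P_{-u|_{[0,t]}}$, using the local isometry from Cartan's theorem for openness and continuity of parallel transport and of $x\mapsto I_x$ for closedness, and then integrating to get item (3)) matches the paper's proof essentially step for step. However, your treatment of boundary points has a genuine gap.

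You claim that continuity makes $A$ continuous and "lets the closedness half of the open–closed argument carry the identity $A(t)=I_{Y(t)}$ across boundary times." This does not work. Closedness of $J$ tells you that if $[0,t_0)\subset J$ then $t_0\in J$; it gives you nothing about $t>t_0$. To extend $J$ past a time $t_0$ you need openness, and your openness argument requires $Y(t_0)\in\operatorname{int}(V_1)$, since it rests on the existence of a local isometry $f_{Y(t_0)}=e_2\circ(e_1|_O)^{-1}$, which in turn requires $e_1$ to be a local diffeomorphism at $Y(t_0)$. If $Y(t_0)\in\partial V_1$, that is exactly where the openness argument breaks down, and since an absolutely continuous curve can touch $\partial V_1$ at a single time, or even dwell in the (Lipschitz) first conjugate hypersurface for a whole interval, the open–closed argument does not close. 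The same obstruction affects your derivation of item (2) at boundary times, since it also relies on the local isometry.

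The paper handles this with an approximation step that your proposal is missing: first carry out the open–closed argument only under the extra hypothesis $Y([0,l])\subset\operatorname{int}(V_1)$ (so openness is available at every point), then for a general $Y$ introduce the rescaled curves $Y_k(t)=(1-\tfrac{1}{k})Y(t)$, whose images lie in $\operatorname{int}(V_1)$, run the argument for each $Y_k$, and pass to the limit $k\to\infty$ using the continuity of parallel transport along $AC$ curves and the continuity of $x\mapsto I_x$. Items (1), (2) and (3) for $Y$ are then obtained in the limit. Without some replacement for this step your argument does not establish the lemma for curves that meet $\partial V_1$.
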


\begin{proof}
  We first assume that the image of the curve $Y$ is contained in the interior of $V_1$.
  Notice that if $Y$ is a radial line, the first statement is just the definition of $I_{Y(l)}$.
  Define:
  \[
    J = \left\{ t : L \circ P_{-u|_{[0,t]}} = P_{-v|_{[0,t]}} \circ I_{Y (t)} \right\}
  \]
  We will prove that $J = [0, l]$ by proving it is open and close.
  
  If $[0, t) \subset J$, we take a sequence $t_j \nearrow t$ to find by continuity of parallel transport and $L \circ P_{-u_{[0,t_j]}} = P_{-v_{[0,t_j]}} \circ I_{Y (t_j)} $ that $L \circ P_{-u_{[0,t]}} = P_{-v_{[0,t]}} \circ I_{Y (t)} $, so closedness follows.
  
  Assume now $[0, t] \subset J$. $Y (t)$ is in the interior of $V_1$ by hypothesis, so there is $\varepsilon>0$ and a neighborhood $O$ of $Y|_{[t-\varepsilon, t+\varepsilon]}$, and an isometry $\varphi:e_1(O)\rightarrow e_2(O)$ with $\varphi\circ e_1|_O = e_2|_O$.
  Then for any $t<t_1<t+\varepsilon$
  $$P_{-u|_{[0,t_1]}} =P_{-u|_{[0,t]}}\circ P_{-u|_{[t,t_1]}}$$
  and similarly for $v$.
  By hypothesis, 
  $$L\circ P_{-u|_{[0,t]}} = P_{-v|_{[0,t]}}\circ I_{Y (t)}.$$
  We have $\varphi\circ e_1|_O = e_2|_O$ so, as parallel transport commutes with isometries, we have 
  $$
  \begin{array}{rcl}
  P_{-v|_{[t,t_1]}}\circ I_{Y(t_1)} &=& P_{-v|_{[t,t_1]}}\circ d_{u(t_1)}\varphi \\
  &=& d_{u(t)}\varphi \circ P_{-u|_{[t,t_1]}} \\
  &=& I_{Y(t)}\circ P_{-u|_{[t,t_1]}}
  \end{array}
  $$
  It follows that $[t,t+\varepsilon)\subset J$, so $J$ is open and the first item follows when the image of $Y$ is contained in the interior of $V_1$.

  We next prove that $I_{Y(t)}(u'(t))=v'(t)$ for any $t\in [0,l]$ such that $Y'(t)$ is defined.
  This is clear when $Y(t)$ is in the interior of $V_1$, because, for an isometry $f$ defined in a neighborhood $O$ of $Y(t)$ where $e_1|_{O}$ is injective and $Y([t-\varepsilon, t+\varepsilon])\subset O$, we have $v|_{[t-\varepsilon, t+\varepsilon]}=f\circ u|_{[t-\varepsilon, t+\varepsilon]}$ and $I_{Y(t)}=d_{u(t)}f$.

  We now deal with curves whose image intersects the boundary of $V_1$.
  Such a curve $Y$ can be approximated in $AC_0(T_p M)$ by curves $Y_k(t)=(1-\frac{1}{k})Y(t)$ that stay in $int(V_1)$, so that $|Y_k-Y|_{AC_0(T_p M)}\rightarrow 0$.
  Taking limits as $k$ goes to infinite, the first item follows by continuity of parallel transport, the second because $x\rightarrow I_x$ is continuous and by a standard use of the chain rule.
  
  The third claim is equivalent to $\tmop{Dev}_2 (v) = L \circ \tmop{Dev}_1 (u)$, and this follows by integration if we prove 
  \begin{equation}
    L (P_{-u|_{[0,t]}} (u' (t))) = P_{-v|_{[0,t]}} (v' (t))  \hspace{1em} \text{ for almost every }t \in [0, 1] \label{eq: parallel transport of speed coincides}
  \end{equation}
  But this clearly follows from the two earlier items.

\end{proof}

\begin{remark}
  We will only need the above lemma, but it is worth mentioning that the above
  also holds for a more general path $Y : [0, 1] \rightarrow T_{p_1} M_1$.
  There are (at least) two ways to do it:
  \begin{enumerate}
    \item As the set of singular points of $e_1$ is a Lipschitz multi-graph
    (see Theorem A of \cite{Itoh Tanaka 00}), we can approximate $Y$ by
    paths that are transverse to the set of conjugate points. The proof that
    $J$ is open at an intersection point $t_0$ consists of gluing two intervals
    $(t_0 - \varepsilon, t_0)$ and $(t_0, t_0 + \varepsilon)$ where $e_1$ is
    not singular, and continuity of $I_X$ makes the gluing possible.
    
    \item The above approach is straightforward but poses some technical
    difficulties. An alternative approach is to approximate the metric by a
    generic one and $Y$ by a generic path in $T_{p_1} M_1$.
    The manifold $M_1$ with the new metric will no longer have curvature $L$-related to that of $M_2$, but the local maps $I_X$ can still be defined as a continuous family of linear isomorphisms.
    The path $Y$ will cross the set of conjugate points transversally, and only at $A_2$
    singularities, which will simplify the proof.
  \end{enumerate}
\end{remark}

\begin{lem}\label{lem: si e_1 o Y es tree formed, sus extremos tienen la misma I_Y(.)}
  Let $Y:[0,l]\rightarrow T_{p_1}M_1$ be a linking curve whose image is contained in $V_1$. Then:
  \begin{itemize}
   \item $e_2 (Y (0)) = e_2 (Y (l))$.
   \item $I_{Y(0)} = I_{Y(l)}$
  \end{itemize}
\end{lem}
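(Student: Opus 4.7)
The plan is to reduce to the setting of Lemma \ref{Dev_2^-1 o L o Dev_1(u) = v}, whose hypothesis requires $Y(0)=0$, by prepending a radial segment. Define $\tilde Y:[0,1+l]\to T_{p_1}M_1$ by $\tilde Y(t)=tY(0)$ for $t\in[0,1]$ and $\tilde Y(t)=Y(t-1)$ for $t\in[1,1+l]$, so that $\tilde Y(0)=0$. Because $V_1$ is star-shaped about the origin, the image of $\tilde Y$ still lies in $V_1$. Set $u=e_1\circ Y$, $v=e_2\circ Y$, $\tilde u=e_1\circ\tilde Y$, $\tilde v=e_2\circ\tilde Y$; then $\tilde u$ (resp. $\tilde v$) is the concatenation of the radial geodesic $\gamma_1$ from $p_1$ with initial velocity $Y(0)$ (resp. $\gamma_2$ from $p_2$ with initial velocity $L(Y(0))$) followed by $u$ (resp. $v$).

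Let $T:[0,l]\to\Gamma$ be the identification witnessing that $u$ is fully tree-formed, and extend it to $\tilde T$ on $[0,1+l]$ that is injective on $[0,1]$ and coincides with $T$ (after shift) on $[1,1+l]$; in particular $\tilde T(1)=\tilde T(1+l)$ since $T(0)=T(l)$. A one-form factoring through $\tilde T$ restricts on the middle piece to one factoring through $T$, so the integral condition there is immediate from the hypothesis on $u$; hence $\tilde u$ is tree-formed for $\tilde T$. Applying Lemma \ref{Dev_2^-1 o L o Dev_1(u) = v}(3) gives $\tilde v=\tmop{Dev}_2^{-1}\circ L\circ\tmop{Dev}_1(\tilde u)$, and since tree-formedness is preserved by affine development (Theorem \ref{Hebda3.3}) and by linear maps, $\tilde v$ is tree-formed for the same $\tilde T$. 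Because $\tilde T(1)=\tilde T(1+l)$, this forces $\tilde v(1)=\tilde v(1+l)$, which is exactly $e_2(Y(0))=e_2(Y(l))$.

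For the equality of local linear isometries, apply Lemma \ref{Dev_2^-1 o L o Dev_1(u) = v}(1) to $\tilde Y$ to obtain $I_{Y(l)}=P_{\tilde v}\circ L\circ P_{-\tilde u}$. Splitting the parallel transports along the two concatenations and inserting the definition $I_{Y(0)}=P_{\gamma_2}\circ L\circ P_{-\gamma_1}$, this simplifies to
\[
I_{Y(l)}=P_v\circ I_{Y(0)}\circ P_{-u}.
\]
By hypothesis $u$ is a fully tree-formed loop in $M_1$ based at $e_1(Y(0))$, and the previous paragraph shows $v=\tilde v|_{[1,1+l]}$ is likewise a fully tree-formed loop in $M_2$ based at $e_2(Y(0))$. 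The holonomy around a fully tree-formed loop is trivial --- in the prototype description $u=\bar u\circ T$ each edge of the tree $\Gamma$ is traversed twice in opposite directions, and the parallel transports along the two traversals cancel --- so $P_u=P_v=\mathrm{id}$ and $I_{Y(l)}=I_{Y(0)}$.

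The main obstacle is the last invocation: that the holonomy around a fully tree-formed loop is the identity. In the model picture this is transparent, but in the broader integral formulation of tree-formedness used here a separate verification is needed, for example by induction on the complexity of the identification tree, or by approximation in the space $AC_p(M)$ by model tree-formed loops for which cancellation is manifest. Once this input is secured, the remainder of the proof is a careful bookkeeping of parallel transports along the two concatenations, organized by Lemma \ref{Dev_2^-1 o L o Dev_1(u) = v}.
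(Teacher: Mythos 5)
Your proof reproduces the paper's argument step by step: the paper likewise prepends the radial segment $r$ from $0$ to $Y(0)$, applies item~(3) of Lemma~\ref{Dev_2^-1 o L o Dev_1(u) = v} and Theorem~\ref{Hebda3.3} to the curve $r\ast Y$ to get $e_2(Y(0))=e_2(Y(l))$, and then derives the second claim from item~(1) of the same lemma, splitting the parallel transports along the concatenation exactly as you do to reach $I_{Y(l)}=P_{e_2\circ Y}\circ I_{Y(0)}\circ P_{-e_1\circ Y}$, and cancelling the holonomies of the fully tree-formed loops $e_1\circ Y$ and $e_2\circ Y$. So this is the same approach, not a different one.

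The step you flag as the ``main obstacle'' --- trivial holonomy around a fully tree-formed loop --- is invoked by the paper with the same brevity (``We can simplify this expression, since both $e_1\circ Y$ and $e_2\circ Y$ are fully tree formed'') and no further justification. Your instinct that this deserves a proof is sound, but it is a gap shared with the paper, not a defect of your argument specifically. One way to close it: by Theorem~\ref{Hebda3.3}, $w=\tmop{Dev}_i(e_i\circ Y)$ is a fully tree-formed loop in the flat vector space $T_{p_i}M_i$; feeding $1$-forms of the shape $\varphi(s)=f(w(s))\,dx^j$ (which factor through $T$ because $w$ does, by an induction on the order of the iterated integral) into the tree-formedness condition shows that all iterated integrals of $w$ vanish, i.e.\ $w$ has trivial signature; and parallel transport along $e_i\circ Y$, being the endpoint of the horizontal lift in the orthonormal frame bundle, is the terminal value of a controlled ODE driven by the bounded-variation path $w$, hence returns to the identity when the signature is trivial. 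Your alternative suggestion (approximation by model tree-formed curves for which the cancellation is manifest) would require knowing that Hebda's tree-formed curves are limits of such model curves, which the paper does not establish and which is the point where Hebda's notion and the Hambly--Lyons tree-like paths are not obviously interchangeable.
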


\begin{proof}
  Let $r:[0,1]\rightarrow V_1$ be the radial path from $0$ to $Y(0)$.
  Define $u = e_1 (r \ast \alpha)$ and $v = e_2 (r \ast \alpha)$, which are absolutely continuous curves defined on the interval $[0,l+1]$.
  Then $v = \tmop{Dev}_2^{- 1} \circ L \circ \tmop{Dev}_1 (u)$ by the previous lemma.
  
  By its definition, $u$ is tree-formed for an identification map $T$ with $T
  \left( 1 \right) = T (l+1)$, so it follows that $v$ also is, by theorem \ref{Hebda3.3}.
  It follows that $e_2 (Y (0)) =v(1)=v(l+1)= e_2 (Y (l))$.
  
  For the second claim, observe that:
\[
I_{Y(l)} = P_{e_2\circ(r\ast Y)} \circ L \circ P_{-e_1\circ (r\ast Y)}=
P_{e_2\circ r} \circ P_{e_2\circ Y} \circ L \circ P_{-e_1\circ Y} \circ P_{-e_1\circ r}
\]
We can simplify this expression, since both $e_1\circ Y$ and $e_2\circ Y$ are fully tree formed:
\[
I_{Y(l)} =
P_{e_2\circ r} \circ L \circ P_{-e_1\circ r}
\]
and that is the definition of $I_{Y(0)}$.
\end{proof}

\section{Synthesis}\label{section: synthesis}

For any point $x\in int(V_1)$, the Cartan lemma provides an isometry from a neighborhood of $e_{1} (x)$ to one of $e_{2} (x)$.
Our plan is to collect those local mappings to build a covering space.%

\begin{dfn}
  A Riemannian covering is a local isometry that is also a covering map (see \cite{O'Neill68} for a motivation).
\end{dfn}

\begin{dfn}\label{dfn: synthesis}
Let $A$ be a topological manifold, $X_{1}$, $X_{2}$ are Riemannian manifolds, and $e_{1} :A \rightarrow X_{1}$, $e_{2} :A \rightarrow X_{2}$ are continuous surjective maps.

  A {\strong{synthesis}} of $X_{1}$ and $X_{2}$ through $e_1$ and $e_2$ is a Riemannian manifold $X$, together with a continuous map $e :A \rightarrow X$, 
  and Riemannian coverings $\pi_{i} :X \rightarrow X_{i}$ for $i=1,2$ such
  that $\pi_{i} \circ e = e_{i}$.

  $$
  \xymatrix{   & A \ar@//[ddl]_{e_1} \ar@//[ddr]^{e_2} \ar@//[d]^{e} \\
  & X \ar[dl]^{\pi_1} \ar[dr]_{\pi_2} &  \\ X_1 &  & X_2}
  $$
If $\pi_i$ are only local isometries, then $X$ is called a \strong{weak synthesis}.
\end{dfn}

We will use this extension of the Ambrose conjecture in terms of synthesis manifolds (see section 3 in \cite{O'Neill68}).

\begin{conjecture}[\textsc{Ambrose Conjecture following O'Neill}]\label{ambrose conjecture}
  Let ($M_{1} ,p_{1}$) and ($M_{2} ,p_{2}$) be two Riemannian manifolds with $L$-related curvature.
  Define $e_1=\exp_{p_1}$ and $e_2=L\circ\exp_{p_2}$.
  
  Then there is a synthesis $M$ of $M_{1}$ and $M_{2}$ through $e_1$ and $e_2$, and a point $p\in M$ such that $\pi_i(p)=e_i(0)$ for $i=1,2$.

  In particular, if $M_{1}$ and $M_{2}$ are simply-connected, then $\pi_{2} \circ \pi_{1}^{-1} :(M_{1} ,p_{1} )   \rightarrow (M_{2} ,p_{2} )$ is the unique isometry whose tangent at $p_{1}$ is $L$.
\end{conjecture}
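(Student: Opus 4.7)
My proposed strategy is to assemble the synthesis manifold $M$ as a quotient of $V_1\subset T_{p_1}M_1$ by the equivalence relation generated by linking curves: declare $x\sim y$ whenever there is a linking curve $Y:[0,l]\to V_1$ with $Y(0)=x$ and $Y(l)=y$, and close up transitively. Lemma \ref{lem: si e_1 o Y es tree formed, sus extremos tienen la misma I_Y(.)} says that along any such $Y$, both $e_2(Y(0))=e_2(Y(l))$ and $I_{Y(0)}=I_{Y(l)}$, so the maps $\pi_1([x])=e_1(x)$ and $\pi_2([x])=e_2(x)$, together with a well-defined tangent-frame identification $I_{[x]}$, descend to the quotient. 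The map $e:T_{p_1}M_1\cap V_1\to M$ is surjective by construction, and the candidate point is $p=[0]$, at which $\pi_i(p)=e_i(0)$.

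Next, I would put a Riemannian structure on $M$ by pulling back the metric of $M_1$ through $\pi_1$. Away from the tangent conjugate locus, $e_1$ is a local diffeomorphism and Lemma \ref{lem: I_X cuando X es punto regular} identifies $I_x$ with the differential of the Cartan-lemma isometry $e_2\circ L\circ (e_1|_O)^{-1}$; these local isometries are precisely the candidate local charts on $M$, through which $\pi_1$ is tautologically a local isometry and $\pi_2$ inherits the same property. To extend this smooth structure across singular points of $V_1$ I invoke the sutured hypothesis of Definition \ref{dfn: sutured manifold}: every $x$ has an unequivocal $y$ with $\|y\|\le\|x\|$ linked to $x$. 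The designed content of unequivocality should be that $y$ admits a canonical, sheet-distinguishing neighborhood on which $e_1$ is injective, so the identification $[x]=[y]$ transports that chart to produce a manifold chart at $[x]$ and extends $\pi_1,\pi_2$ to local isometries defined on all of $M$.

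Finally, I would upgrade $\pi_i$ from local isometries to Riemannian coverings using the completeness of $M_1$ and $M_2$: a surjective local isometry between complete Riemannian manifolds of the same dimension is a covering map, and in the simply-connected case the composition $\pi_2\circ\pi_1^{-1}$ then yields the required global isometry with differential $L$ at $p_1$. The main obstacle will be the technical machinery underlying unequivocality: verifying that $M$ is Hausdorff, that evenly-covered neighborhoods genuinely exist under $\pi_1$, and that the linking equivalence relation never collapses two distinct local sheets while still relating every vector to a regular representative. This is precisely what the sutured condition is engineered to guarantee, and carrying it out rigorously, including establishing the quasi-continuity of the linking curves mentioned in the outline and handling the boundary of $V_1$, constitutes the substantive content of Section \ref{section: synthesis}.
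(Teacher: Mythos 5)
Your outline of the synthesis quotient, the role of $I_x$, and the topology built from unequivocal charts follows the paper's Section~\ref{section: synthesis} closely, and you also correctly sense that some openness of the linking relation is required for the quotient topology to behave; this is exactly what drives the paper's definition of quasi-continuous linking curves and the relation $\leftrightsquigarrow$, rather than the ordinary linking curves closed up transitively that you propose (the latter does not directly yield Lemma~\ref{linked to unequivocal is open}). Note also that the statement is only a conjecture in the paper: what is actually proved is Main Theorem~\ref{main theorem ambrose}, the conjecture under the sutured hypothesis, which is the hypothesis your sketch also invokes, so you are really attempting that theorem.

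There is, however, a genuine error in the final step. You write that $\pi_i$ can be upgraded to Riemannian coverings ``using the completeness of $M_1$ and $M_2$: a surjective local isometry between complete Riemannian manifolds of the same dimension is a covering map.'' This is false as stated: the map $(0,2\pi)\to S^1$, $t\mapsto e^{it}$, is a surjective local isometry onto a complete manifold of the same dimension and is not a covering. What the paper actually uses (Griffiths--Wolf, \cite{Griffiths Wolf}) is that a local isometry with complete \emph{domain} is a covering map; the completeness of $M_1$ and $M_2$ is irrelevant, and the burden is to prove that the synthesis $M$ itself is complete. This is where the norm inequality $\|y\|\le\|x\|$ in Definition~\ref{dfn: sutured manifold}---which your proposal never uses---becomes essential. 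The paper's Proposition~\ref{d is distance-decreasing} constructs a $1$-Lipschitz function $d(q)=\inf_{x\in e^{-1}(q)}|x|$ on $M$ by lifting near-minimizing paths through the image of $\mathcal{A}_2$, restarting the lift at a representative of smaller norm whenever a singularity is hit (using the sutured condition), and controlling the accumulated radius via Lemma~\ref{norm dominated by exp}. This completeness argument is the substantive content missing from your proposal.
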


If $ e_{1}$ has no singularities, we can pull the metric from $M_{1}$ onto $T_{p_{1}} M_{1}$ and the desired Riemannian 	coverings are $\pi_1=e_{1}$ and $\pi_2=e_{2}$.
In the presence of singularities, \emph{the idea is to build the synthesis as a quotient of $A=V_{1}$} that identifies pairs of points with the same image by both $ e_{1}$ and $ e_{2}$.

\subsection{Unequivocal points and linked points}\label{section: unequivocal y linked}

\begin{dfn}\label{definition:unequivocal}
  We say that an open set $O \subset V_1$ is {\strong{unequivocal}} if and only if $ e_{1} (O)$ is an open set, and there is an isometry $\varphi_O: e_1(O)\rightarrow e_2(O)$ such that $\varphi_O\circ e_1|_O=e_2|_O$.
\end{dfn}

\begin{dfn}\label{definition:unequivocal point}
  We say $x \in V_1$ is {\strong{unequivocal}} if there is a sequence of unequivocal sets $W_n$ such that $e_1(W_n)$ is a neighborhood base of $e_1(x)$.
\end{dfn}

\begin{remark}
 The above definition allows for points $x\in V_1$ that are not isolated in $e_1^{-1}(e_1(x))$.
 This is important if we want a definition of sutured manifold that may hold for all Riemannian manifolds.
\end{remark}

We plan to identify points in $T_pM$ that are joined by a linking curve.
However, in order to build a quotient space, we need some kind of \emph{openness} as in Lemma~\ref{linked to unequivocal is open}.
In order to define a \emph{relaxed version} of the above relation for which Lemma~\ref{linked to unequivocal is open} holds, we need to allow curves with some sort of ``controlled'' discontinuities.

\begin{dfn}\label{dfn: quasi-continuous linking curve}
   A \emph{quasi-continuous linking curve} is a bounded curve $Y:[0,l]\rightarrow V_1$ such that:
\begin{enumerate}
 \item The composition $e_1\circ Y$ is an absolutely continuous tree formed curved.
 \item For every point $t_0$, there is an $\varepsilon>0$ such that either $Y|_{[t_0-\varepsilon, t_0+\varepsilon]}$ is absolutely continuous, or its image is contained in an unequivocal set $W$.
\end{enumerate}
\end{dfn}

\begin{dfn}\label{def: linked points}
Two points $x, y \in V_1$ are {\strong{strongly linked}} (by the curve $Y$) iff there is a linking curve $Y:[0,l]\rightarrow V_1$ such that $Y(0)=x$ and $Y(l)=y$.

Two points $x, y \in V_1$ are {\strong{linked}} ($x\leftrightsquigarrow y$) if and only if there is a quasi-continuous linking curve $Y:[0,l]\rightarrow V_1$ such that $x$ is the limit of $Y(t_j)$ for some sequence $t_j\searrow 0$ and $y$ is the limit of $Y(t_j)$ for some sequence $t_j\nearrow l$.

\end{dfn}

\subsection{Main properties of unequivocal sets and linked points}

In this section we extend some results from section \ref{section: Affine development and tree formed curves}.

\begin{lem}\label{lem: I_X cuando X es unequivocal}
  Let $W$ be any unequivocal neighborhood of $x \in T_{p_1}M_1$.
  Let $\varphi:e_1(W)\rightarrow e_2(W)$ be the local isometry such that $\varphi\circ e_1=e_2$.
  Then 
  $$I_x = d_{e_1 (x)} \varphi$$%
  In particular, it depends only on $e_1(x)$.
\end{lem}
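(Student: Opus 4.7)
The plan is to reduce to Lemma~\ref{lem: I_X cuando X es punto regular} by approximating $x$ with regular points of $e_1$ drawn from $W$, and then invoking the continuity of the map $x\mapsto I_x$ noted in the remark after its definition.

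First, observe that the interior of $V_1$ is dense in $V_1$ (since $V_1$ has Lipschitz boundary equal to the first tangent conjugate locus) and that on $\mathrm{int}(V_1)$ the map $e_1$ is a local diffeomorphism, so every point there is a regular point of $e_1$. Therefore I can choose a sequence $x_n\to x$ with $x_n\in W\cap\mathrm{int}(V_1)$, and for each $n$ pick an open neighborhood $O_n\subset W$ of $x_n$ on which $e_1$ is injective. Define $f_{x_n}:=e_2\circ (e_1|_{O_n})^{-1}$ as in Lemma~\ref{lem: I_X cuando X es punto regular}, which gives $I_{x_n}=d_{e_1(x_n)}f_{x_n}$.

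On $e_1(O_n)$, both $f_{x_n}$ and the restriction of $\varphi$ satisfy the identity $\psi\circ e_1=e_2$; since $e_1|_{O_n}$ is injective, the two isometries must coincide on $e_1(O_n)$. Consequently $d_{e_1(x_n)}f_{x_n}=d_{e_1(x_n)}\varphi$, so $I_{x_n}=d_{e_1(x_n)}\varphi$ for every $n$. Letting $n\to\infty$, continuity of $x\mapsto I_x$ gives $I_{x_n}\to I_x$, while smoothness of $\varphi$ gives $d_{e_1(x_n)}\varphi\to d_{e_1(x)}\varphi$, so $I_x=d_{e_1(x)}\varphi$.

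For the ``in particular'' statement, if $W'$ is another unequivocal neighborhood of some $x'\in V_1$ with $e_1(x')=e_1(x)$ and associated isometry $\varphi':e_1(W')\to e_2(W')$, then $\varphi$ and $\varphi'$ both satisfy $\psi\circ e_1=e_2$, so they coincide on the open overlap $e_1(W)\cap e_1(W')\subset M_1$, which is a neighborhood of $e_1(x)$. Their differentials at $e_1(x)=e_1(x')$ therefore agree, giving $I_x=I_{x'}$. There is no serious obstacle here beyond being careful when $x$ itself lies on $\partial V_1$, but since interior points of $V_1$ are automatically regular and dense in $V_1$, the approximation argument applies uniformly and the rest is a straightforward continuity argument.
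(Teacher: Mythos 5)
Your argument for the identity $I_x = d_{e_1(x)}\varphi$ is essentially the paper's proof: approximate $x$ by regular points in $W\cap\mathrm{int}(V_1)$, identify $I_{x_n} = d_{e_1(x_n)}f_{x_n}$ via Lemma~\ref{lem: I_X cuando X es punto regular}, observe that $f_{x_n}$ agrees with $\varphi$ on $e_1(O_n)$ because both invert $e_1$ on $O_n$ and then compose with $e_2$, and pass to the limit using continuity of $z\mapsto I_z$ and smoothness of $\varphi$.

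Your last paragraph, however, contains a gap. You assert that if $W'$ is another unequivocal neighborhood of a (possibly different) point $x'$ with $e_1(x')=e_1(x)$, then $\varphi$ and $\varphi'$ coincide on $e_1(W)\cap e_1(W')$ ``because both satisfy $\psi\circ e_1=e_2$.'' That does not follow: the constraint determines $\varphi(q)=e_2(w)$ for whatever $w\in W$ satisfies $e_1(w)=q$, and $\varphi'(q)=e_2(w')$ for whatever $w'\in W'$ satisfies $e_1(w')=q$, and when $w\neq w'$ the needed equality $e_2(w)=e_2(w')$ is exactly the kind of statement the paper has to earn with its linking-curve machinery, not something that is automatic. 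Your reasoning is only valid when $W$ and $W'$ share a point (for example when $x'=x$), since then the two isometries agree at one point with the same differential $I_z$ and hence on the connected overlap. The paper's ``in particular'' is most naturally read as the weaker statement that, \emph{within a fixed unequivocal set} $W$, the formula $I_x=d_{e_1(x)}\varphi$ shows that $I_x$ is a function of $e_1(x)$ alone; this is all the paper uses later, and it follows at once from the formula with no extra argument.
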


\begin{proof}
  If $x$ is a regular point of $e_1$, we know from lemma \ref{lem: I_X cuando X es punto regular} that $I_x=d_{e_1 (x)} f_x$.
  Both $f_x$ and $\varphi$ are isometries that agree on the open set $e_1(U)$, for an open set $U\subset W$ such that $e_1$ is injective when restricted to $U$.
  Thus $f_x$ and $\varphi$ agree on $U$ and the result follows.

  For a conjugate point $x$, we take limits of a sequence of regular points, since $I_z = d_{e_1 (z)} \varphi$ for any regular point $z\in W$, and $z\rightarrow I_z$ is continuous.
\end{proof}

\begin{lem}\label{Dev_2^-1 o L o Dev_1(u) = v, para Y con teletransporte}
  Let $Y : [0, l] \rightarrow V_1$ be a bounded curve such that:
\begin{itemize}
 \item   $Y (0) = 0$.
 \item  $u = e_1 \circ Y$ and $v = e_2 \circ Y$ are absolutely continuous.
 \item For every point $t_0$, there is an $\varepsilon>0$ such that either $Y|_{[t_0-\varepsilon, t_0+\varepsilon]}$ is absolutely continuous, or its image is contained in an unequivocal set $W$.
\end{itemize}
  Then:
\begin{enumerate}
 \item $I_{Y(l)} = P_{v} \circ L \circ P_{-u}$.
 \item At any point $t$ where $u'(t)$ and $v'(t)$ are defined, $I_{Y(t)}(u'(t))=v'(t)$.
 \item $v = (\tmop{Dev}_2)^{- 1} \circ L \circ \tmop{Dev}_1 (u) $.
\end{enumerate}
\end{lem}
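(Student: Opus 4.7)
The plan is to mimic the open/closed-set argument used to prove the earlier Lemma \ref{Dev_2^-1 o L o Dev_1(u) = v}, but incorporate the teleportation at points where $Y$ itself is discontinuous. The key additional input is Lemma \ref{lem: I_X cuando X es unequivocal}, which says that in any unequivocal set $W$ with associated isometry $\varphi_W$, the operator $I_x$ depends only on $e_1(x)$ and equals $d_{e_1(x)}\varphi_W$. Note in particular that although $Y$ may jump inside $W$, the curve $u=e_1\circ Y$ is continuous, so $t\mapsto I_{Y(t)} = d_{u(t)}\varphi_W$ is continuous on any such interval.

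First I would define
\[
J = \{\, t\in[0,l] \,:\, L\circ P_{-u|_{[0,t]}} = P_{-v|_{[0,t]}}\circ I_{Y(t)} \,\}
\]
and verify that $0\in J$ (since $I_0=L$). Closedness of $J$ is essentially the same as in Lemma \ref{Dev_2^-1 o L o Dev_1(u) = v}: for a sequence $t_j\nearrow t$ with $t_j\in J$, parallel transport is continuous in the $AC$ topology (so $P_{-u|_{[0,t_j]}}\to P_{-u|_{[0,t]}}$ and similarly for $v$), and the convergence $I_{Y(t_j)}\to I_{Y(t)}$ follows either from continuity of $x\mapsto I_x$ when $Y$ is AC near $t$, or from $I_{Y(t_j)}=d_{u(t_j)}\varphi_W\to d_{u(t)}\varphi_W=I_{Y(t)}$ when the image is in an unequivocal set $W$ near $t$.

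For openness, let $[0,t]\subset J$ and consider the two allowed local behaviors. If $Y$ is absolutely continuous on some $[t-\varepsilon,t+\varepsilon]$, then the openness argument of Lemma \ref{Dev_2^-1 o L o Dev_1(u) = v} applies verbatim on that piece. If instead $Y([t-\varepsilon,t+\varepsilon])$ is contained in an unequivocal set $W$ with isometry $\varphi_W$ satisfying $\varphi_W\circ e_1|_W=e_2|_W$, then $v|_{[t,t_1]}=\varphi_W\circ u|_{[t,t_1]}$ for $t<t_1<t+\varepsilon$. Because parallel transport commutes with isometries, one gets
\[
P_{-v|_{[t,t_1]}}\circ I_{Y(t_1)} = P_{-v|_{[t,t_1]}}\circ d_{u(t_1)}\varphi_W = d_{u(t)}\varphi_W\circ P_{-u|_{[t,t_1]}} = I_{Y(t)}\circ P_{-u|_{[t,t_1]}},
\]
and concatenating with the identity for $[0,t]$ via $P_{-u|_{[0,t_1]}}=P_{-u|_{[0,t]}}\circ P_{-u|_{[t,t_1]}}$ (and similarly for $v$) gives $t_1\in J$. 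Thus $J=[0,l]$, establishing item (1).

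Item (2) is immediate on any interval where $Y$ is AC (by the earlier lemma), and on any interval where $Y\subset W$ it follows from $v=\varphi_W\circ u$, which yields $v'(t)=d_{u(t)}\varphi_W(u'(t))=I_{Y(t)}(u'(t))$. Item (3) follows from item (2) by integration, exactly as in the previous lemma, using that both $u$ and $v$ are absolutely continuous so they coincide with the integrals of their derivatives. The main subtlety I expect is the openness step at a transition point where the local behavior switches from AC to unequivocal-contained (or vice versa); this is handled by covering $[0,l]$ with finitely many intervals of either type via compactness, so that at every interior transition one can pick a small $\varepsilon$-interval falling entirely into one of the two cases and apply the corresponding argument.
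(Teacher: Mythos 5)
Your strategy matches the paper's — defining the set $J$, establishing closedness via continuity of $I_{Y(t)}$ (using Lemma~\ref{lem: I_X cuando X es unequivocal} at jump points), and proving openness by case analysis on the two local behaviors — and your treatment of the unequivocal case and of items (2) and (3) is correct and essentially identical to the paper's. However, there is a genuine gap in the AC case of the openness step.

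You write that if $Y$ is absolutely continuous on $[t-\varepsilon,t+\varepsilon]$, ``the openness argument of Lemma~\ref{Dev_2^-1 o L o Dev_1(u) = v} applies verbatim.'' That is only true when $Y(t)\in\mathrm{int}(V_1)$: the local isometry $\varphi$ defined on a neighborhood of $Y|_{[t-\varepsilon,t+\varepsilon]}$ exists only in the interior. If $Y(t)\in\partial V_1$ (which the hypothesis does not exclude, since $Y$ takes values in $V_1$, not $\mathrm{int}(V_1)$), no such $\varphi$ exists, and the verbatim argument fails. The earlier lemma escaped this problem by globally replacing $Y$ with $Y_k=(1-\tfrac{1}{k})Y$, but that trick does not carry over here: your curve $Y$ may have jumps inside unequivocal sets, and while $u=e_1\circ Y$ is AC because the jumps occur within fibers of $e_1$, the curve $e_1\circ ((1-\tfrac{1}{k})Y)$ need not be continuous at those jumps. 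The paper handles the AC-but-boundary case with a separate, local ramped shrinking
\[
Y_{t_0,\varepsilon,k}(t) = \begin{cases} Y(t) & t\le t_0 \\ \big(1-\tfrac{1}{\varepsilon k}(t-t_0)\big)Y(t) & t_0<t\le t_0+\varepsilon \\ \big(1-\tfrac{1}{k}\big)Y(t) & t_0+\varepsilon<t \end{cases}
\]
which agrees with $Y$ on $[0,t_0]$, is AC on $[t_0,t_0+\varepsilon]$ with image in $\mathrm{int}(V_1)$, and lets one pass to the limit in $k$ to conclude $[t_0,t_0+\varepsilon)\subset J$. This third case must be treated explicitly; without it your proof of openness (and the corresponding case of item (2)) is incomplete. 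The ``transition-point'' subtlety you flag at the end, on the other hand, is not a real issue — the open/closed argument handles it automatically and no compactness cover is needed.
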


\begin{proof}
 Define $J$ as in lemma \ref{Dev_2^-1 o L o Dev_1(u) = v}: 
  \[
    J = \left\{ t : L \circ P_{-u|_{[0,t]}} = P_{-v|_{[0,t]}} \circ I_{Y (t)} \right\}
  \]
 
 We know that $I_{Y (t)}$ is continuous at every $t$ where $Y$ is continuous.
 By the last hypothesis and the previous lemma, $I_{Y (t)}$ is also continuous at points where $Y$ is discontinuous.
 It follows that $J$ is closed and it remains to prove that it is open.
 Let $t_0\in J$.
 
 If $Y|_{[t_0-\varepsilon, t_0+\varepsilon]}$ is absolutely continuous and its image is contained in $int(V_1)$, we prove that $[t_0,t_0+\varepsilon)\subset J$ as we did in Lemma~\ref{Dev_2^-1 o L o Dev_1(u) = v}.
 If $Y|_{[t_0-\varepsilon, t_0+\varepsilon]}$ is contained in an unequivocal set $W$, there is an isometry $\varphi:e_1(W)\rightarrow e_2(W)$ such that $\varphi\circ u|_{[t_0-\varepsilon, t_0+\varepsilon]} = v|_{[t_0-\varepsilon, t_0+\varepsilon]}$ so, as parallel transport commutes with isometries, and using Lemma~\ref{lem: I_X cuando X es unequivocal}, we have, for $t_0<t_1<t_0+\varepsilon$ 
  $$
  \begin{array}{rcl}
  P_{-v|_{[t_0,t_1]}}\circ I_{Y(t_1)} &=& P_{-v|_{[t_0,t_1]}}\circ d_{u(t_1)}\varphi \\
  &=& d_{u(t_0)}\varphi \circ P_{-u|_{[t_0,t_1]}} \\
  &=& I_{Y(t_0)}\circ P_{-u|_{[t_0,t_1]}}
  \end{array}
  $$
 and we deduce that $[t_0,t_0+\varepsilon]\subset J $ as in Lemma~\ref{Dev_2^-1 o L o Dev_1(u) = v}.
 
 Finally, if $Y|_{[t_0-\varepsilon, t_0+\varepsilon]}$ is absolutely continuous but its image is not contained in $int(V_1)$,  we define for every $k$ a modified curve:

$$
Y_{t_0,\varepsilon,k}(t) = \left\{ \begin{array}{ll}
  Y(t)& t\leq t_0 \\
  \left( 1 -\frac{1}{\varepsilon k}(t-t_0)\right)Y(t) & t_0<t\leq t_0+\varepsilon\\
  \left( 1 -\frac{1}{k}\right)Y(t) & t_0+\varepsilon<t
\end{array} \right.
$$
 Since $Y_{t_0,\varepsilon,k}|_{[t_0-\varepsilon, t_0+\varepsilon]}$ is absolutely continuous and its image is contained in $int(V_1)$, we learn that for any $t<t_0+\varepsilon$
$$
L \circ P_{-u_k|_{[0,t]}} = P_{-v_k|_{[0,t]}} \circ I_{Y_{t_0,\varepsilon,k} (t)}
$$
 and since $Y_k$ converges to $Y$ in $AC_0(T_p M)$, we have proven that $[t_0, t_0+\varepsilon)\subset J$.
 
 We now turn to the proof that $I_{Y(t)}(u'(t))=v'(t)$ for almost every $t\in [0,1]$.
 We have already shown this if $Y|_{[t-\varepsilon, t+\varepsilon]}$ is absolutely continuous and $Y(t)$ belongs to $int(V_1)$.
 If $Y|_{[t-\varepsilon, t+\varepsilon]}$ is absolutely continuous but $Y(t)$ does not belong to $int(V_1)$, we construct the same curves $Y_{t_0,\varepsilon,k}$: we know that for every $t\in [0,1]$ for which $u'(t)$ and $v'(t)$ are defined, we have $I_{Y_{t_0,\varepsilon,k}(t)}(u'(t))=v'(t) $.
 Since $I_{Y_{t_0,\varepsilon,k}(t)}$ converges to $I_{Y(t)}$ as $k$ goes to infinity, it follows that $I_{Y(t)}(u'(t))=v'(t)$.
 
 Finally, if $Y|_{[t-\varepsilon, t+\varepsilon]}$ is contained in an unequivocal set $W$, let $\varphi:e_1(W)\rightarrow e_2(W)$ be the isometry in the definition of unequivocal set.
 By lemma \ref{lem: I_X cuando X es unequivocal}
$$
I_{Y(t)}(u'(t_0))=(d_{u(t_0)}\varphi)(u'(t_0))=(\varphi\circ u)'(t_0) = v'(t_0)
$$

 The third item follows from the first and the second as in Lemma~\ref{Dev_2^-1 o L o Dev_1(u) = v}.

\end{proof}

\begin{lem}\label{lem: si x e y linked, I_x=I_y}
 Let $x,y\in V_1$ be \emph{linked} points.
 Then
 \begin{enumerate}
  \item $e_1(x)=e_1(y)$
  \item $e_2(x)=e_2(y)$
  \item $I_{x}=I_{y}$
 \end{enumerate}
\end{lem}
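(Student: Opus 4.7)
My plan is to reduce the quasi-continuous statement to the analogous statement at the ``honest'' endpoints $Y(0)$ and $Y(l)$, and then to mimic the proof of Lemma~\ref{lem: si e_1 o Y es tree formed, sus extremos tienen la misma I_Y(.)} with the stronger tool provided by Lemma~\ref{Dev_2^-1 o L o Dev_1(u) = v, para Y con teletransporte}.

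\medskip
\noindent\textbf{Step 1 (reduction to $Y(0)$, $Y(l)$).} I would first argue that $e_1(x)=e_1(Y(0))$, $e_2(x)=e_2(Y(0))$ and $I_x=I_{Y(0)}$, and similarly at $l$. Since $e_1\circ Y$ is absolutely continuous, $(e_1\circ Y)(0)=\lim_j e_1(Y(t_j))=e_1(x)$ by continuity of $e_1$. For the other two equalities I split into the two cases allowed by the quasi-continuity condition at $t_0=0$: if $Y|_{[0,\varepsilon]}$ is absolutely continuous then $Y(0)=x$ and there is nothing to prove; otherwise $Y([0,\varepsilon])\subset W$ for an unequivocal set $W$, in which case $Y(0)\in W$ and all $Y(t_j)\in W$ for $j$ large, so $e_1(x)=e_1(Y(0))\in e_1(W)$ and the isometry $\varphi_W$ gives $e_2(x)=\lim\varphi_W(e_1(Y(t_j)))=\varphi_W(e_1(x))=e_2(Y(0))$. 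By Lemma~\ref{lem: I_X cuando X es unequivocal} both $I_x$ and $I_{Y(0)}$ equal $d_{e_1(x)}\varphi_W$, so they agree.

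\medskip
\noindent\textbf{Step 2 (extended Hebda lemma).} It remains to prove $e_1(Y(0))=e_1(Y(l))$, $e_2(Y(0))=e_2(Y(l))$ and $I_{Y(0)}=I_{Y(l)}$. For this I would copy the proof of Lemma~\ref{lem: si e_1 o Y es tree formed, sus extremos tienen la misma I_Y(.)}, replacing the use of Lemma~\ref{Dev_2^-1 o L o Dev_1(u) = v} by the quasi-continuous version Lemma~\ref{Dev_2^-1 o L o Dev_1(u) = v, para Y con teletransporte}. Concretely, prepend the radial segment $r(t)=tY(0)$, $t\in[0,1]$, and consider $\tilde Y=r\ast Y$. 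The concatenation is quasi-continuous: at the joint $t=1$ either $Y$ is continuous at $0$, or $Y([0,\varepsilon])\subset W$ and then by openness of $W$ the tail of $r$ also lies in $W$, so $\tilde Y|_{[1-\delta,1+\varepsilon]}$ has image in the unequivocal set $W$. Setting $u=e_1\circ\tilde Y$ and $v=e_2\circ\tilde Y$, Lemma~\ref{Dev_2^-1 o L o Dev_1(u) = v, para Y con teletransporte} gives $v=\mathrm{Dev}_2^{-1}\circ L\circ \mathrm{Dev}_1(u)$. Since $e_1\circ Y$ is fully tree-formed, $u$ is tree-formed with an identification $T$ that satisfies $T(1)=T(1+l)$, hence by Theorem~\ref{Hebda3.3} so is $v$; this yields $v(1)=v(1+l)$, i.e.\ $e_2(Y(0))=e_2(Y(l))$. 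The identity $I_{Y(0)}=I_{Y(l)}$ follows verbatim from the factorisation argument in the proof of Lemma~\ref{lem: si e_1 o Y es tree formed, sus extremos tienen la misma I_Y(.)}, using that the parallel transports $P_{e_1\circ Y}$ and $P_{e_2\circ Y}$ are the identity along fully tree-formed curves.

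\medskip
\noindent\textbf{Main obstacle.} The conceptual content is already packaged in the two earlier lemmas; the real work is the careful handling of the controlled discontinuities at $t=0$ and $t=l$. The delicate point is verifying that $e_1(x)$ genuinely lies in $e_1(W)$ (not merely in its closure) so that $\varphi_W$ may be evaluated there, and that the prepended radial segment together with the possibly-discontinuous $Y$ still meets the quasi-continuity hypothesis of Lemma~\ref{Dev_2^-1 o L o Dev_1(u) = v, para Y con teletransporte} at the joint. Once these bookkeeping checks are in place, the statement falls out of the development-plus-tree-formedness machinery.
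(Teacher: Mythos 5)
Your proof is correct and follows essentially the same route as the paper: the paper's own proof also reduces to Lemma~\ref{lem: si e_1 o Y es tree formed, sus extremos tienen la misma I_Y(.)} applied to $r\ast Y$, justified by Lemma~\ref{Dev_2^-1 o L o Dev_1(u) = v, para Y con teletransporte}. Your Step~1 usefully makes explicit a point the paper leaves implicit: that the conclusions at $Y(0)$, $Y(l)$ transfer to the genuine endpoints $x$, $y$ of the linked relation (which may differ from $Y(0)$, $Y(l)$ by a controlled discontinuity into an unequivocal set), using continuity of $z\mapsto I_z$ and Lemma~\ref{lem: I_X cuando X es unequivocal}.
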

\begin{proof}
 Let $Y$ be a quasi-continuous linking curve that links $x$ and $y$.

 The first part is obvious from the definition because $e_1(x)$ and $e_1(y)$ are the extrema of the fully tree formed curve $e_1\circ Y$.
 
 The second and third parts follow as in \ref{lem: si e_1 o Y es tree formed, sus extremos tienen la misma I_Y(.)}, because the curve $r\ast Y$ satisfies the hypothesis of lemma \ref{Dev_2^-1 o L o Dev_1(u) = v, para Y con teletransporte}.
\end{proof}

\begin{lem}\label{linked to unequivocal is open}
 Let $x\in V_1$ be linked to some $z\in W$ for an unequivocal set $W$.
 Then there is a neighborhood $U\subset V_1$ of $V_1$ that contains $x$ and such that every $y\in U$ is linked to some $w\in W$.
\end{lem}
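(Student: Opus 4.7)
The plan is to construct, for each $y$ in a small enough neighborhood $U$ of $x$ in $V_1$, an explicit quasi-continuous linking curve $Y_y$ from $y$ to a point $w = w(y) \in W$. I obtain $Y_y$ by prepending to the given linking curve $Y$ a short path $\sigma$ from $y$ to the starting point of $Y$, and appending a short path $\tau$ inside $W$ from $z = Y(l)$ to $w$, chosen so that $e_1 \circ \tau$ is the time-reversal of $e_1 \circ \sigma$. In this way, the two added pieces will traverse the same new pendant edge of the tree in opposite directions, preserving fully tree-formedness.

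First, I shrink $W$: since $z \in W$ is an unequivocal open set, I choose an open $W_z \subset W$ containing $z$ on which $e_1|_{W_z}$ is a diffeomorphism onto an open neighborhood of $e_1(z) = e_1(x)$ in $M_1$ (the equality of images comes from Lemma~\ref{lem: si x e y linked, I_x=I_y}). For $y$ in a small enough neighborhood $U$ of $x$ in $V_1$ one has $e_1(y) \in e_1(W_z)$, and I set $w(y) := (e_1|_{W_z})^{-1}(e_1(y)) \in W$. I then pick a path $\sigma : [0,1] \to V_1$ from $y$ to $Y(0)$ (which equals $x$ in the simplest case that $Y$ is absolutely continuous near $t = 0$), and define $\tau(s) := (e_1|_{W_z})^{-1}(e_1(\sigma(1-s)))$ for $s \in [0,1]$. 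By construction $\tau$ is a path in $W_z$ from $z$ to $w$ satisfying $e_1 \circ \tau(s) = e_1 \circ \sigma(1-s)$. Concatenating, $Y_y := \sigma \cdot Y \cdot \tau$ is a curve in $V_1$ from $y$ to $w$.

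To verify that $Y_y$ is a quasi-continuous linking curve I check the three required properties. Absolute continuity of $e_1 \circ Y_y$ is immediate from the construction. For the fully tree-formed property I enlarge the tree $\Gamma_0$ of $Y$ by grafting a new pendant edge $e^*$ at its root: the $\sigma$-piece traverses $e^*$ inward, the $Y$-piece stays in $\Gamma_0$, and the $\tau$-piece traverses $e^*$ outward. The integral test for tree-formedness then splits into the middle integral over $[1, l+1]$, which vanishes by the tree-formedness of $Y$, and a pair of matched integrals over $[0,1]$ and $[l+1, l+2]$ that cancel: the substitution $s = l+2 - t$, the identity $e_1 \circ Y_y(l+2-t) = e_1 \circ Y_y(t)$ built into $\tau$, and the factoring condition $\varphi(l+2-t) = \varphi(t)$ give $(e_1 \circ Y_y)'(l+2-t) = -(e_1 \circ Y_y)'(t)$, so the two contributions cancel. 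The same computation handles every other pair $t_1 < t_2$ with $T(t_1) = T(t_2)$. Finally, the quasi-continuous condition is clear on the open interiors of the three pieces, and at each of the two junctions it follows from the corresponding endpoint property of the original $Y$: at $t = l+1$ one uses that $\tau \subset W_z \subset W$ together with the fact that (after an innocuous modification of $Y$ near its right end) $Y$ near $t = l$ has image in $W$; at $t = 1$ one uses the unequivocal set $W_0$ containing $Y([0, \varepsilon])$ supplied by the quasi-continuity of $Y$ at $0$.

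The one technical point that requires care is the first junction in the case that $Y$ is discontinuous at $t = 0$ (so $Y(0) \neq x$, but $x \in \overline{W_0}$): then $\sigma$ must be chosen to have its final portion inside $W_0$ and to end at $Y(0)$. This is possible for $y \in U$ because $x \in \overline{W_0}$, so one can first connect $y$ to a nearby point $y_0 \in W_0$ by a short path in $V_1$ (using the local path-connectedness of $V_1$, which has Lipschitz boundary), and then connect $y_0$ to $Y(0)$ within $W_0$. Shrinking $U$ if necessary so that this composite $\sigma$ lies in $W_0$ on a whole neighborhood of time $1$ is the main obstacle, but it is routine once the definitions are unpacked.
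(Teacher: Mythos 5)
Your overall strategy—prepend a path $\sigma$ from $y$ to the start of the given linking curve, then append a matched path $\tau$ inside $W$ whose image under $e_1$ is the time-reversal of $e_1\circ\sigma$—is the same as the paper's, and the tree-formedness bookkeeping you carry out (grafting a pendant edge at the root and cancelling the two new integrals) is correct. However, the construction of $\tau$ rests on an assumption that is false in general: you choose $W_z\subset W$ containing $z$ on which $e_1|_{W_z}$ is a diffeomorphism. Being unequivocal does not imply this, and in fact the definition of unequivocal set is specifically designed to cover points where $e_1$ is \emph{not} a local diffeomorphism. The point $z$ may be conjugate—for instance an $A_3(I)$ point, whose adapted-coordinate neighborhoods are the prototypical unequivocal sets (Lemma~\ref{A3I are unequivocal}, and see the remark after Definition~\ref{definition:unequivocal point})—and in the canonical $A_3$ model $e_1$ is two-to-one on a whole region of every neighborhood of the singular point, so no such $W_z$ exists. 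Your explicit formula $w(y)=(e_1|_{W_z})^{-1}(e_1(y))$ and $\tau(s)=(e_1|_{W_z})^{-1}(e_1(\sigma(1-s)))$ therefore has no meaning in exactly the case the lemma is most needed for.

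The paper sidesteps this entirely by \emph{not} requiring the appended curve to be continuous: it defines $U$ as the connected component of $e_1^{-1}(e_1(W))\cap V_1$ through $x$, chooses an absolutely continuous $A$ from $y$ to the start of the linking curve, and then picks $B(t)\in W$ pointwise (possibly discontinuously, using choice) so that $e_1(B(t))=e_1(A(a-t))$. The resulting concatenation $A\ast Z\ast B$ is still a quasi-continuous linking curve because the discontinuous tail $B$ has image inside the unequivocal set $W$—which is precisely what clause (2) of Definition~\ref{dfn: quasi-continuous linking curve} allows, and indeed is the \emph{raison d'\^etre} of the ``quasi-continuous'' relaxation. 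Your argument would be correct under the extra hypothesis that $z$ is a regular point of $e_1$, but that is not part of the statement and excludes the case the machinery is built to handle. If you want to keep your more explicit $\tau$, you would need to establish a continuous (or at least measurable) selection of the local inverse on $W$, which is nontrivial near $A_3(I)$ points; the paper instead just accepts a discontinuous $B$ and lets the definition absorb it.
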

\begin{proof}
  We define $U$ as the connected component of $e_1^{-1}(e_1(W))\cap V_1$ that contains $x$.
  For $y\in U$, we want to prove that $y$ is linked to some $w\in W$.
  
  Let $Z:[0,l]$ be a quasi-continuous linking curve that joins $x$ and $z$.
  We want to find curves $A:[0,a]\rightarrow U$ and $B:[0,a]\rightarrow W$ such that $e_1\circ(A\ast Z\ast B)$ is fully tree formed, $A(0)=y$.
  This holds if we choose an arbitrary absolutely continuous path $A$ with $A(0)=x$ and $A(a)=y$, and $B(t)$ so that $e_1(B(t))=e_1(A(a-t))$.
  We may not be able to choose an absolutely continuous path $B$, but since its image is contained in $W$, $Y$ is a quasi-continuous linking curve.

\end{proof}

\begin{remark}
 Such a choice of $B$ is not very elegant, and requires using the axiom of choice.
 The interested reader can find a more constructive alternative in the proof of Proposition~\ref{d is distance-decreasing}.
\end{remark}

\subsection{Construction of the synthesis}\label{subsection: synthesis}

\begin{theorem}

  \label{thm: weak synthesis}Let $M_{1}$, $M_{2}$ be Riemannian manifolds with $L$-related curvature, such that for every $x \in V_1$ is linked to some unequivocal point $y\in V_1$.
  
  Then there is a \emph{weak synthesis} of $M_{1}$ and $M_{2}$ through $e_1$ and $e_2$.%

\end{theorem}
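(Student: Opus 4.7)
The plan is to build $X$ as the quotient of $V_1$ by the linking relation. First I would check that linking is an equivalence relation on $V_1$: reflexivity via constant curves, symmetry by reversing the parameter, and transitivity by concatenation (the juxtaposition of two quasi-continuous linking curves remains quasi-continuous, and its composition with $e_1$ stays absolutely continuous and fully tree formed because both pieces are, with their endpoints identified in the combined tree). Let $X = V_1/\sim$, let $e : V_1 \to X$ be the projection, and define $\pi_i : X \to M_i$ by $\pi_i([x]) = e_i(x)$. These are well defined by Lemma~\ref{lem: si x e y linked, I_x=I_y}, which gives $e_i(x) = e_i(y)$ whenever $x \sim y$, and the identities $\pi_i \circ e = e_i$ are automatic.

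For the topology and Riemannian structure I would use unequivocal open sets as coordinate patches. By the hypothesis together with Lemma~\ref{linked to unequivocal is open}, every class $[x] \in X$ has a representative $y$ lying in some unequivocal open $W \subset V_1$ with $e_1(W)$ open in $M_1$. For such a $W$, the map $\pi_1 : e(W) \to e_1(W)$ is a bijection: surjectivity is clear, and injectivity follows because any $y_1, y_2 \in W$ with $e_1(y_1) = e_1(y_2)$ are linked by the piecewise-constant quasi-continuous curve $Y$ with $Y|_{[0,1/2]} \equiv y_1$ and $Y|_{(1/2,1]} \equiv y_2$, whose image sits in the unequivocal set $W$ and whose composition $e_1 \circ Y$ is constant, hence vacuously fully tree formed in the sense of Definition~\ref{dfn: quasi-continuous linking curve}. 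I would then endow $X$ with the unique topology making every such $\pi_1|_{e(W)}$ a homeomorphism onto $e_1(W)$; transition maps between two unequivocal charts are identities on the corresponding overlap in $M_1$ and hence smooth, so that pulling back the metric of $M_1$ through these charts turns $X$ into a Riemannian manifold for which $\pi_1$ is tautologically a local isometry.

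For $\pi_2$, on each chart $e(W)$ one has $\pi_2 = \varphi_W \circ \pi_1$ with $\varphi_W : e_1(W) \to e_2(W)$ the local isometry provided by Definition~\ref{definition:unequivocal}, and Lemma~\ref{lem: I_X cuando X es unequivocal} ensures that the various $\varphi_W$ agree on overlaps (they share the germ $I_y$ at any $y$ in the common preimage), so $\pi_2$ is globally well defined and is a local isometry as a composition of local isometries. Continuity of $e : V_1 \to X$ is immediate from $e_1 = \pi_1 \circ e$ together with the local-homeomorphism property of $\pi_1$. The main technical obstacle I anticipate is showing that this construction yields a genuine (Hausdorff) topological manifold: two classes $[x_1] \neq [x_2]$ with $\pi_1([x_1]) \neq \pi_1([x_2])$ are separated by pulling back disjoint neighborhoods in $M_1$, and the injectivity argument above shows they cannot share the same $\pi_1$-image while having representatives in a common unequivocal chart; I would expect to dispose of the remaining cases by separating them via $\pi_2$ and by a careful shrinking of unequivocal neighborhoods, rather than by strengthening the hypotheses.
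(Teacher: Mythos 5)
Your construction is essentially the same as the paper's: quotient $V_1$ by the linking relation, declare images of unequivocal sets to be chart domains, use $\pi_1$ to pull back the smooth and Riemannian structure, then observe that $\pi_2 = \varphi_W \circ \pi_1$ is also a local isometry. The identification of the key lemmas (Lemma~\ref{lem: si x e y linked, I_x=I_y} for well-definedness, Lemma~\ref{lem: I_X cuando X es unequivocal} for consistency of $\varphi_W$) matches the paper, and your piecewise-constant quasi-continuous curve showing injectivity of $\pi_1$ on a chart is exactly the paper's argument.

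However, there is one genuine flaw: you claim that continuity of $e : V_1 \to X$ is ``immediate from $e_1 = \pi_1 \circ e$ together with the local-homeomorphism property of $\pi_1$.'' This inference is false in general. If $\pi_1$ is merely a local homeomorphism, $\pi_1 \circ e$ continuous does not force $e$ to be continuous (take $X$ the disjoint union of two copies of $\RR$, $\pi_1$ the fold map, and $e : \RR \to X$ sending $(-\infty,0)$ to one copy and $[0,\infty)$ to the other: $\pi_1\circ e$ is the identity yet $e$ is discontinuous). What is actually needed is that the preimage $e^{-1}([W])$ of each basic open set is open in $V_1$, i.e.\ that the set of points in $V_1$ linked to some point of an unequivocal $W$ is open. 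This is precisely the content of Lemma~\ref{linked to unequivocal is open}, which you cite earlier (for a slightly different purpose, producing chart domains around each class), but not at the place where it is genuinely indispensable. The paper invokes that lemma exactly for this continuity step. With the lemma correctly invoked, your argument goes through. You also flag, without resolving, that $X$ must be Hausdorff; the paper's own proof is silent on this point as well, so this is not a discrepancy with the reference argument, but it would deserve attention in a careful write-up.
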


\begin{proof}
  Define a set $M$ as a quotient by the equivalence relation:
  \begin{equation*}
    M = \left(A/ \leftrightsquigarrow \right)
  \end{equation*}
  Let $e:A \rightarrow M$ be the projection map. We define maps $\pi_{ i} :M
  \rightarrow M_{i}$ by $\pi_{i} ([x])=e_{i} (x)$. Both maps are well defined by lemma \ref{lem: si x e y linked, I_x=I_y}.
  
  We give $M$ a topology where the basic open sets are $[W]=\{[x],x \in W\}$, for unequivocal open set $W$.
  \begin{itemize}
    \item By hypothesis, every point belongs to some open set, so this is a good basis for a topology.
    
    \item $e$ is continuous at every point $x\in A$:
    Let $[W]$ be a basis open neighborhood of $[x]$, for $W$ unequivocal.
    There is $z\in W$ such that $x\leftrightsquigarrow z$, by a quasi-continuous linking curve $\rho$.
    By lemma \ref{linked to unequivocal is open}, there is an open neighborhood $U$ such that any point in $U$ is linked to some point in $W$.
    Thus $U$ is contained in $e^{-1}([W])$.

    \item $\pi_{1} |_{[W]}$ is injective for any basis open set $[W]$: 
    Let $[x_{1} ],[x_{2} ] \in [W]$ be such that $\pi_1 ([x_{1} ])= \pi_1 ([x_{2} ])$.
    This means $e_1(x_1)=e_1(x_2)$.
    We can assume $x_{1} ,x_{2} \in W$, which implies $[x_1]=[x_2]$ (using a curve $Y$ that only takes the values $x_1$ and $x_2$).
    
    \item $\pi_{2} |_{[W]}$ is injective for any basis open set $[W]$: 
    If $\pi_2 ([x_{1} ])= \pi_2 ([x_{2} ])$ for $x_1,x_2\in W$, it follows that $e_2(x_1)=e_2(x_2)$, which implies $\varphi_W(e_1(x_1))=\varphi_W(e_1(x_2))$, for the isometry $\varphi_W$ in the definition of unequivocal set, which implies $e_1(x_1)=e_1(x_2)$ and $[x_1]=[x_2]$.

    \item $\pi_{1} |_{[W]}$ is continuous, for a basis set $[W]$: let $U$ be an open
    subset of $\pi_{1} ([W])=e_1(W)$. Then $( \pi_{1} |_{[W]} )^{-1} (U)=[W] \cap
    \pi_{1}^{-1} (U)=[W \cap e_{1}^{-1} (U)]$ is an open set, because $e_{1} (W \cap e_{1}^{-1} (U))= e_1(W)\cap U$ is an open set and $W \cap e_{1}^{-1} (U) \subset W$, so $W \cap e_{1}^{-1} (U)$ is unequivocal.
    
    \item $\pi_{2} |_{[W]}$ is continuous, for a basis set $[W]$: let $U$ be an open subset of $\pi_{2} ([W])=e_2(W)$, and let $\varphi_W:e_1(W)\rightarrow e_1(W)$ be the isometry associated with $W$.
    Then $( \pi_{2} |_{[W]} )^{-1} (U)=[W] \cap \pi_{2}^{-1} (U)=[W \cap e_{2}^{-1} (U)]$.
    This is an open set, because $e_{1} (W \cap e_{2}^{-1} (U))=\varphi_W^{-1}(e_{2} (W \cap e_{2}^{-1} (U))=\varphi_W^{-1}(e_{2}(W)\cap U) =\varphi_W^{-1}(\varphi_W(e_1(W))\cap U)$ is an open set and $W \cap e_{2}^{-1} (U) \subset W$, so $W \cap e_{2}^{-1} (U)$ is unequivocal.
    
    \item For a basis open set $[W]$, $\pi_1 ([W])=e_1(W)$ is open by hypothesis, and $\pi_2([W])=e_2(W)=\varphi(e_1(W))$ is also open.
    Hence, $\pi_{i}$ is open for $i=1,2$.
    Thus, $\pi_{i} |_{[W]}$ is an homeomorphism onto its image.
    
    \item Since $\pi_{1}$ and $\pi_{2}$ are local homeomorphisms, we can use $\pi_{1}$, for instance, to give $M$ the structure of a smooth Riemannian manifold, which trivially makes $\pi_{1}$ a local isometry.
    For an unequivocal set $W$, with $e_{2} |_{W} = \varphi \circ e_{1} |_{W}$, then $\pi_{2} \circ (    \pi_{1} |_{[W]} )^{-1} = \varphi$ is an isometry from $\pi_{1} ([W])=e_{1}(W)$ onto $\pi_{2} ([W])=e_{2} (W)$, so $\pi_{2}$ is also a local isometry.

  \end{itemize}
\end{proof}

\subsection{Compactness}\label{subsection: from local homeo to covering}

In order to prove Theorem \ref{main theorem ambrose}, we still have to prove that $\pi_{1}$ and $\pi_{2}$ given by theorem \ref{thm: weak synthesis}  are covering maps.
This requires some sort of ``compactness'' result, and using the extra hypothesis in the definition of a sutured manifold.
We start with a general lemma:

\begin{lem}
  \label{norm dominated by exp}Let $\e : \T \rightarrow M$ be the exponential
  map from a point $p$ in a Riemannian manifold $M$. Then for any absolutely continuous
  path $x:[0,t_0] \rightarrow \T$, the total variation of $t \rightarrow |x(t)|$
  is not greater than the length of $t \rightarrow \e (x(t))$. In particular:
  \[ |x(t_0)|-|x(0)|< \tmop{length} ( \e \circ x) \]
\end{lem}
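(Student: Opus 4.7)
The approach rests on a single tool: the Gauss lemma. Since $x:[0,t_0]\to T_pM$ is absolutely continuous, the real-valued function $r(t):=|x(t)|$ is absolutely continuous as well (the norm is $1$-Lipschitz, so the composition inherits an $AC$ modulus from that of $x$). Hence the total variation of $r$ equals $\int_0^{t_0}|r'(t)|\,dt$, and it suffices to prove the pointwise bound $|r'(t)|\le |(\e\circ x)'(t)|$ at almost every $t\in[0,t_0]$.

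The plan is to verify this pointwise inequality by an orthogonal decomposition at each good $t$. Fix any $t\in(0,t_0)$ such that $x(t)\ne 0$, $x$ is differentiable at $t$, and $r$ is differentiable at $t$ (the complement of this set in $[0,t_0]$ is negligible; see the next paragraph). Write $\hat x(t):=x(t)/r(t)$, and decompose
\[
x'(t) \;=\; r'(t)\,\hat x(t) \;+\; w(t),\qquad w(t)\perp \hat x(t),
\]
which is legitimate because $r(t)=\langle x(t),\hat x(t)\rangle$ and differentiation in $t$ yields $r'(t)=\langle x'(t),\hat x(t)\rangle$. Applying $d_{x(t)}\e$ and invoking the Gauss lemma, the radial image $d_{x(t)}\e(\hat x(t))$ is a unit vector in $T_{\e(x(t))}M$ orthogonal to $d_{x(t)}\e(w(t))$. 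Therefore
\[
|(\e\circ x)'(t)|^2 \;=\; r'(t)^2 \;+\; \bigl|d_{x(t)}\e(w(t))\bigr|^2 \;\ge\; r'(t)^2,
\]
which is the desired pointwise inequality. Note that no injectivity or regularity of $\e$ at $x(t)$ is required; the Gauss lemma holds everywhere on $T_pM$, even at conjugate points, since it is a statement about $d\e$ alone.

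It remains to dispose of the exceptional set $N:=\{t:x(t)=0\}$. On the interior of $N$, the curve $x$ is locally constant equal to $0$, so both $r'(t)$ and $(\e\circ x)'(t)$ vanish. At any $t\in N$ where $r$ is differentiable, $r$ attains a global minimum at $t$, which forces $r'(t)=0$, so the inequality holds trivially. Thus the pointwise bound is valid almost everywhere on $[0,t_0]$, and integrating,
\[
\operatorname{Var}(r)\;=\;\int_0^{t_0}|r'(t)|\,dt\;\le\;\int_0^{t_0}|(\e\circ x)'(t)|\,dt\;=\;\operatorname{length}(\e\circ x),
\]
which proves the first assertion. The ``in particular'' statement follows because, by absolute continuity of $r$,
\[
|x(t_0)|-|x(0)|\;=\;r(t_0)-r(0)\;=\;\int_0^{t_0} r'(t)\,dt\;\le\;\int_0^{t_0}|r'(t)|\,dt\;=\;\operatorname{Var}(r).
\]
(I read the strict inequality in the displayed conclusion as a $\le$; strict inequality would require the curve to do some nontrivial angular motion.) The only mildly delicate step is the handling of $N$, but as noted it is elementary; the rest is a direct application of the Gauss lemma.
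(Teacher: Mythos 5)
Your proof is correct and follows the same route as the paper: decompose $x'$ into its radial and tangential components, apply the Gauss lemma to see that the radial component of $d\e(x')$ has the same norm while the tangential part can only add to it, and integrate. You are somewhat more careful than the paper in handling the exceptional set where $x(t)=0$, and you correctly observe that the displayed conclusion should read $\le$ rather than $<$ (the paper's proof, like yours, only establishes the non-strict inequality).
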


\begin{proof}
 For an absolutely continuous path $x$:

  \[ \tmop{length} ( \e \circ x ) = \int_0^{t_{0}} |( \e \circ x )' |= \int_0^{t_{0}} |d \e ( x' )| \]
  The speed vector $x' =ar+v$ is a linear combination of a multiple
  of the radial vector and a vector $v$ perpendicular to the radial direction.
  By the Gauss lemma, $|d \e ( x' )|  = \sqrt{a^{2} +|d \e (v)|^{2}} \geq |a|$.
  On the other hand, $v$ is tangent to the spheres of constant radius, so:
  \[ TV_0^{t_{0}} (|x|)= \int_0^{t_{0}} \left|\frac{d}{dt} |x|\right|= \int_0^{t_{0}} |a| \leq \tmop{length} ( \e \circ x) \]

\end{proof}

Let us now come back to our hypothesis.

\begin{dfn}\label{dfn: distance to p in the synthesis}
 Let $(M_1,p_1)$ is sutured, and $(M_2,p_2)$ a manifold with $L$-related curvature.
 Let $M$ be the weak synthesis obtained by application of Theorem \ref{thm: weak synthesis} and $p=e(0)\in M$.

 Then the \strong{synthesis-distance} to $p$ is the function $d:M \rightarrow \mathbb{R}$ given by
\[ d(q)= \inf_{x \in e^{-1} (q)} \{|x|\} \]
\end{dfn}

If we could prove that $e$ is the exponential map of the Riemannian manifold $M$ at the point $p=e ( 0 )$, it would follow that $d$ is the distance to $p$, and the following proposition would be trivial.

\begin{prop} \label{d is distance-decreasing}
 The synthesis-distance $d$ is a $1$-Lipschitz function on $M$: %
  \[ | d(q_{2} )-d(q_{1} ) | \leq d_{M} (q_{1} ,q_{2} ) \]
\end{prop}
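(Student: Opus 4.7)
The plan is to show $d(q_2) \le d(q_1) + d_M(q_1, q_2)$; interchanging the roles of $q_1$ and $q_2$ then yields the full Lipschitz bound. Fix $\varepsilon > 0$, choose $y_1 \in e^{-1}(q_1)$ with $|y_1| < d(q_1) + \varepsilon$ and a rectifiable curve $\gamma\colon [0, L] \to M$ from $q_1$ to $q_2$, parametrized by arc length, with $L < d_M(q_1, q_2) + \varepsilon$. It suffices to construct $y_2 \in e^{-1}(q_2)$ with $|y_2| \le |y_1| + L$; letting $\varepsilon \to 0$ then finishes the proof.

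I would run a Hopf--Rinow-style connectedness argument on
\[
T \;=\; \bigl\{\, t \in [0, L] \colon \exists\, y \in V_1 \text{ with } e(y) = \gamma(t) \text{ and } |y| \le |y_1| + t \,\bigr\}.
\]
The goal is $T = [0, L]$, and a witness at $t = L$ supplies the desired $y_2$. Non-emptiness is immediate (take $t = 0$, $y = y_1$). For closedness, given $t_n \in T$ converging to $t$ with witnesses $y_n$, the norms are uniformly bounded, so $(y_n)$ lies in a compact subset of the closed set $V_1$; any subsequential limit $y$ satisfies $e(y) = \gamma(t)$ by continuity of $e$ (established in the proof of Theorem~\ref{thm: weak synthesis}) and $|y| \le |y_1| + t$, so $t \in T$.

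The heart of the argument is right-extendability: given $t^* \in T$ with $t^* < L$, produce some $t' > t^*$ in $T$. Let $y^*$ be a witness. Invoke the sutured hypothesis to replace $y^*$ by an unequivocal $\tilde y$ linked to $y^*$ with $|\tilde y| \le |y^*|$; by Lemma~\ref{lem: si x e y linked, I_x=I_y}, $e(\tilde y) = e(y^*) = \gamma(t^*)$, so $\tilde y$ itself witnesses $t^* \in T$. Since $\tilde y$ is unequivocal, choose an unequivocal set $W \subset V_1$ such that $e_1(W)$ is a small open neighborhood of $e_1(\tilde y) = \pi_1(\gamma(t^*))$, with $\pi_1|_{[W]}$ an isometry onto $e_1(W)$ (possible by the local-isometry property of $\pi_1$). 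For $\delta > 0$ small enough $\gamma([t^*, t^* + \delta]) \subset [W]$, and one lifts the projected curve $\pi_1 \circ \gamma|_{[t^*, t^* + \delta]}$ through $e_1|_W$ to an absolutely continuous curve $\sigma\colon [t^*, t^* + \delta] \to W$, using the linking-curve construction of Lemma~\ref{linked to unequivocal is open} to arrange that $\sigma(t^*)$ is linked to $\tilde y$ and satisfies $|\sigma(t^*)| \le |\tilde y|$. Then $e \circ \sigma = \gamma|_{[t^*, t^* + \delta]}$, and Lemma~\ref{norm dominated by exp} gives
\[
|\sigma(t^* + \delta)| \;\le\; |\sigma(t^*)| + \mathrm{length}(e_1 \circ \sigma) \;\le\; |\tilde y| + \delta \;\le\; |y_1| + t^* + \delta,
\]
so $t^* + \delta \in T$, as required.

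The main obstacle is producing the AC lift $\sigma$ when $\tilde y$ sits on the conjugate locus $\partial V_1$: there the local inverse of $e_1$ may fail to exist smoothly, and simultaneously enforcing $|\sigma(t^*)| \le |\tilde y|$ while keeping $\sigma(t^*) \in W$ in the same equivalence class as $\tilde y$ is delicate. The options available are either to combine the unequivocal structure of $W$ with the explicit quasi-continuous linking-curve construction of Lemma~\ref{linked to unequivocal is open}, or to follow the perturbation route sketched in the remark after Lemma~\ref{Dev_2^-1 o L o Dev_1(u) = v} --- approximating $\gamma$ (and, if needed, the metric) by generic data so that $\gamma$ crosses the conjugate set transversely --- at the price of an additional limiting argument.
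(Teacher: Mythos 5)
Your overall scaffolding (a connectedness argument on $T\subset[0,L]$ instead of the paper's direct telescoping construction) is a reasonable alternative, and the non-emptiness and closedness steps are correct. But the right-extendability step has a genuine gap, and it is precisely the point the paper works hardest to handle. You need an absolutely continuous lift $\sigma$ of $\gamma|_{[t^*,t^*+\delta]}$ through an unequivocal set $W$, but $e_1|_W$ is in general not a local homeomorphism (for an $A_3(I)$ point it is $2$-to-$1$ across a hypersurface), so a pointwise-chosen lift need not be absolutely continuous; the paper explicitly flags this in the proof of Lemma~\ref{linked to unequivocal is open} (``We may not be able to choose an absolutely continuous path $B$\dots''). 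Without an AC lift, Lemma~\ref{norm dominated by exp} --- the only tool translating the length of $e_1\circ\sigma$ into a bound on the total variation of $|\sigma|$ --- cannot be invoked, so your displayed inequality is unjustified. Your first proposed remedy does not close the gap: a quasi-continuous linking curve carries no analogue of that total-variation bound, and a ``teleport'' inside $W$ could a priori land on a preimage of \emph{larger} norm. A secondary issue: you cannot assume $|\sigma(t^*)|\le|\tilde y|$ or even that $\tilde y\in W$, since Definition~\ref{definition:unequivocal point} only requires $e_1(W_n)$ to be a neighborhood base of $e_1(\tilde y)$.

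The paper's proof is essentially a fully executed version of your second proposed remedy, and the execution is where the substance lies. It chooses a curve $c$ near $\gamma$ that avoids $\mathcal{N}=e(Sing\setminus\mathcal{A}_2)$ (null for $\Hnuno$ by \cite[1.1]{Hebda} and the fact that $\pi_1$ is a local isometry) and meets $e(\mathcal{A}_2\cap B_0(R))$ transversally in a discrete set of times, via a countable-intersection-of-residual-properties argument and a compactness argument ruling out accumulation of intersection times away from the endpoint. Between consecutive intersection times the lift runs through nonsingular points, so it is AC and Lemma~\ref{norm dominated by exp} controls the growth of its norm; at an intersection time the lift hits an $\mathcal{A}_2$ point and the sutured hypothesis supplies an unequivocal preimage of no larger norm from which the lift restarts through nonsingular points. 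The telescoping estimate~\eqref{bound for the norm of the lifted curve} then bounds the final witness. To make your connectedness argument work you would have to build this transversality perturbation into the right-extendability step, at which point it reproduces the paper's construction; the perturbation cannot be done ``once at the end'' by a limiting argument, because it is what makes each local lift AC in the first place.
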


\begin{proof}

\strong{Claim.}
  Given $q_1,q_2\in M$ and $\varepsilon>0$, there is a family of absolutely continuous paths $\beta_k:[0,l_k]\rightarrow V_1$, for $k$ integer, with the following properties:
\begin{itemize}
 \item the curves $\beta_k$ are parametrized so that $e_1\circ \beta_k$ has unit speed. This is equivalent to asking that $e\circ \beta_k$ has unit speed, since $\pi_1$ is a local isometry. In particular, $l_k=\tmop{length}(e_1\circ \beta_k)=\tmop{length}(e\circ \beta_k)$.
 \item $\beta_1(0)\in e^{-1}(q_1)$ and $|\beta_1(0)| < d(q_1) + \varepsilon/2$.
 \item $e(\beta_k(l_k))\rightarrow q_2$.
 \item for each $k$: $|\beta_{k+1}(0)|\leq |\beta_k(l_k)|$.
 \item $\sum_{k=1}^{\infty} l_k< d_M(q_1,q_2) + \varepsilon/2$.
\end{itemize}
The family of curves may be finite or infinite.
We will assume the latter, since the former is strictly simpler.

From this and Lemma~\ref{norm dominated by exp} it follows that 
\begin{equation}\label{bound for the norm of the lifted curve}
\begin{array}{rcl}
|\beta_N(l_N)| &=& |\beta_1(0)| + \sum_{k=1}^{N-1} (|\beta_{k+1}(0)| - |\beta_k(0)|) + (|\beta_N(l_N)| - |\beta_N(0)|)\\
      &<& |\beta_1(0)| + \sum_{k=1}^N (|\beta_k(l_k)| - |\beta_k(0)|) \\
      &<& |\beta_1(0)| + \sum_{k=1}^N l_k \\
      &<& d(q_1) + d_M(q_1,q_2) + \varepsilon
\end{array} 
\end{equation}
Thus the points $\beta_N(l_N)$ are bounded and a subsequence of them converge to some $x\in V_1$ which belongs to $e^{-1}(q_2)$ and satisfies the same bound.
This proves the result, and it remains to prove the claim.

By \cite[1.1]{Hebda}, $e_1(Sing\setminus \mathcal{A}_2)$ has null $\Hnuno$ measure.
We define $\mathcal{N}=e(Sing\setminus \mathcal{A}_2)$.
It follows that $\mathcal{N}$ has null $\Hnuno$ measure because $\mathcal{N}\subset \pi_1^{-1}(e(Sing\setminus \mathcal{A}_2))$, and the image of a $\Hnuno$-null set by a local isometry is also $\Hnuno$-null.%

Let $R=d(q_1) + d_M(q_1,q_2) + \varepsilon$, and let $B_0(R)\subset\T$ be the open ball of radius $R$.
The set $\mathcal{A}_2\cap B_0(R)$ is a smooth $n-1$-manifold (non-compact without boundary).
Any $x\in \mathcal{A}_2$ has a neighborhood $U$ such that $e_1(U\cap\mathcal{A}_2\cap B_0(R))$ and $e(U\cap\mathcal{A}_2\cap B_0(R))$ are (isometric) smooth $n-1$-manifolds.

We start the construction of $\beta$ choosing a starting point $\beta_1(0)\in e^{-1}(q_1)$ such that $|\beta_1(0)| < d(q_1) + \varepsilon/2$.
The point $\beta_1(0)$ may be singular (that is, $\beta_1(0)\in \partial V_1$), in which case we start $\beta_1$ with a short straight path that reaches a new $\beta_1(\varepsilon/4)\in int(V_1)\cap B_0(R)\setminus e_1^{-1}(e_1(Sing)\cap B_0(R))$.
We can choose $\beta_1$ to that its derivative makes a positive angle with the kernel of the exponential, and this is all we need to change the parameter so that $e_1\circ\beta_1$ has unit speed.
So we assume that the length of $e_1\circ\beta_1$ is $\varepsilon/4$.

By \cite[4.3]{Hebda82}, if $q_2\notin\mathcal{N}$, we can find a curve $c$ disjoint from $\mathcal{N}$ joining $e(\beta_0(\varepsilon/4))$ with $q_2$ whose length is not greater than $d_M(e(\beta_1(\varepsilon/4)), q_2) +\varepsilon/4<d_M(q_1,q_2)+\varepsilon/2$.
We remark that \cite[4.3]{Hebda82} requires that $M$ is complete, something that we have not proved yet.
However, the proof of \cite[4.3]{Hebda82} is valid also without this hypothesis with minor modifications:
\begin{itemize}
 \item let $v$ be a path in $M$ joining $e(\beta_1(\varepsilon/4))$ and $q_2$, of length smaller than $d_M(e(\beta_1(\varepsilon/4)),q_2)+\varepsilon/8 $
 \item find a finite partition $0=t_0<t_1<\dots<t_N $ of the domain of the curve so that two consecutive points $v(t_i), v(t_{i+1})$ lie in a strongly convex open set.
 \item choose points $c(t_0)=v(t_0)$, $c(t_N)=v(N)$ and $c(t_i)$ using \cite[4.2]{Hebda82} (for $K=\mathcal{N}$) so that the length of $c|_{[t_i,t_{i+1}]}$ is smaller than the length of $v|_{[t_i,t_{i+1}]}+\frac{\varepsilon}{8\,N}$.
\end{itemize}
The resulting curve $c$ does not intersect $\mathcal{N}$ and has length smaller than $length(v)+\varepsilon/8<d_M(e(\beta_1(\varepsilon/4)),q_2)+\varepsilon/4$.

If $q_2\in\mathcal{N}$, we can use a similar procedure: take a curve $v:[\varepsilon/8,0]\rightarrow M$ from a nearby point $v(\varepsilon/8)$ into $q_2$ of length smaller than $\varepsilon/8$ and split it by intervals of length $\varepsilon/2^{k+1}$ (we start from $k=3$ for convenience).
We can then replace $v$ by a broken geodesic $c$ that avoids $\mathcal{N}$ such that the length of the segment $c|_{[\varepsilon/2^k,\varepsilon/2^{k+1}]}$ is no more than $\varepsilon/2^k$.
In this way we find a continuous curve $c$ of length smaller than $\varepsilon/4$ that joins $q_2$ to a point not in $\mathcal{N}$.

We also want a curve that is transverse to $e(\mathcal{A}_2\cap B_0(R))$.
This is equivalent to being transverse to each of the countably many smooth manifolds $e(U\cap\mathcal{A}_2\cap B_0(R))$ that we mentioned before.
Since transversality to a smooth manifold is a residual property \cite[3.2.1]{Hirsch}, and a countable intersection of residual sets is residual, and in particular dense, we can find a curve $u:[0,l]\rightarrow M$ joining $\beta_1(\varepsilon/4)$ and $c(\varepsilon/8)$ that is close to $c$ in the $C([0,l],M)$ topology, so that, in particular, the length of $u$ is not greater than $d_M(q_1,q_2) +\varepsilon/2$, that is transverse to $e(\mathcal{A}_2\cap B_0(R))$ and does not intersect $\mathcal{N}$ except possibly at the final point.

Assume that the intersection points of $u$ and $e(\mathcal{A}_2\cap B_0(R))$ cluster at a point $u(t_\ast)\neq u(l)$.
Then there is a sequence of points $x_j\in \mathcal{A}_2\cap B_0(R)$ and times $t_j\rightarrow t_\ast$ such that $e(x_j)=u(t_j)$ converge to $u(t_\ast)$.
Since the $x_j$ are bounded, there is a subsequence converging to $x_\ast\in Sing\cap B_0(R)$.
If $x_\ast\in\mathcal{A}_2$, since $u(t_\ast)=e(x_\ast)$, and $u$ is transverse to $e(U\cap\mathcal{A}_2)$ at $t_\ast$, there is $\delta>0$ such that $u|_{[t_\ast-\delta,t_\ast+\delta]}$ does not intersect $e(U\cap\mathcal{A}_2)$, which is a contradiction with the fact that a subsequence of the $x_j$ converge to $x_\ast$.
If $x_\ast\in Sing\setminus\mathcal{A}_2$, the contradiction is with the fact that the image of $u$ does not intersect $\mathcal{N}$.
Note however that it is perfectly possible that the intersection points of $u$ and $e(\mathcal{A}_2\cap B_0(R))$ cluster at the final point $u(l)$.

We have shown that the set of intersection points of $u$ and $e(\mathcal{A}_2\cap B_0(R))$ is discrete $0<t_1<\dots<t_j<\dots$ except at the limit $j\rightarrow\infty$, and is bounded by $l$.

Since $\beta_1(\varepsilon/4)$ is in $int(V_1)$, we can start a lift of $u$ from that point.
Using the argument of equation \ref{bound for the norm of the lifted curve}, we see that the curve $\beta$ will stay in $B_0(R)$.
Thus, the lift will stay in $int(V_1)$ up to $t_1$ since $u|_{[0,t_1)}$ does not intersect $e(Sing)\cap B_0(R)$, so we get a curve $\beta_2:[0,l_2]\rightarrow V_1$ that may end in a point in $\mathcal{A}_2\cup int(V_1)$.

If $\beta_2(l_2)$ is in $\mathcal{A}_2$, we can find a new unequivocal point $\beta_3(0)$ that is linked to $\beta_2(l_2)$ and with $|\beta_3(0)|<|\beta_2(l_2)|$.
Since the point $e(\beta_3(0))=e(\beta_2(l_2))$ belongs to the image of $u$ and is unequivocal, it can only be a nonsingular point, so we can start a new lift of $u|_{[t_2,t_3]}$, and so on.
The claim follows easily.

\end{proof}

\begin{proof}[Proof of Main Theorem \ref{main theorem ambrose}]

We only need to prove that the weak synthesis $M$ built using Theorem \ref{thm: weak synthesis} is complete when $(M_1, p_1)$ is sutured.
It is well known that a local isometry is a covering map when the domain is complete (see for example corollary 2 in \cite{Griffiths Wolf}).

As mentioned in conjecture \ref{ambrose conjecture}, this implies the original Ambrose conjecture when both manifolds are simply connected.

Let $q_{n}$ be a Cauchy sequence in $M$.
Then there is $R>0$ such that $d_M( q_{n} ,q_{1} ) <R$.
Thanks to proposition \ref{d is distance-decreasing}, we can find $x_{n} \in e^{-1} ( q_{n} ) \cap B_{| q_{1} |+R}$. As $x_{n}$ is bounded, we can assume by passing to a subsequence that $x_{n}$ converges to some $x_{0}$, and then $q_{n} \rightarrow e ( x_{0} )$.
\end{proof}

\section{Generic exponential maps}\label{section: generic metric}

A generic perturbation of a Riemannian metric greatly simplifies the types of
singularities that can be found on the exponential map
({\cite{Weinstein}},{\cite{Klok}}) or the cut locus with respect to any point
({\cite{Buchner Stability}}). In {\cite{Weinstein}}, A. Weinstein showed that for a
generic metric, the set of conjugate points in the tangent space near a
singularity of order $k$ is given by the equations:
\[ \text{$\left|\begin{array}{cccc}
     x_{1} & x_{2} & \ldots & x_{k}\\
     x_{2} & x_{k+1} & \ldots & x_{2k-1}\\
     \vdots &  &  & \vdots\\
     x_{k} & x_{2k-1} & \ldots & x_{\frac{k(k+1)}{2}}
   \end{array}\right| =0$} \]
where $x_{1} , \ldots x_{n}$ are coordinates in $\Tone$, and $k(k+1)/2 \leqslant n$.
This is called a conical singularity.

In {\cite{Buchner Stability}}, M. Buchner studied the energy functional on curves starting at $p_{1}$ and the endpoint fixed at a different point of the manifold, as a family of functions parametrized by the endpoint.
The singularities of the exponential map can be detected as degenerate degenerate critical points of the energy functional with both endpoints fixed, so his results also apply to our setting.
He also proved a multitransversality statement about this family of functions that we will comment on later, and then used this information to provide a description of the cut locus of a generic metric.

It is well known that a exponential map only has Lagrangian singularities. 
In {\cite{Klok}}, Fopke Klok showed that the generic singularities of the
exponential maps are the generic singularities of Lagrangian maps. These
singularities are, in turn, described by means of the generalized phase
functions of the singularities. 
This is the approach most useful to our purposes.
We also wish to mention \cite{Janeczko Mostowski} for a different approach and generalizations of some of these results.

\subsection{Generalized phase functions}

A {\emph{generalized phase function}} is a map $F:U \times \RR^{k}
\rightarrow \RR$ such that $D_{q} F= \left( \frac{\partial F}{\partial
q_{1}} , \ldots , \frac{\partial F}{\partial q_{k}} \right) :U \times
\RR^{k} \rightarrow \RR^{k}$ is transverse to $\{0\} \in
\RR^{k}$. We will use a result that relates generalized phase
functions defined at $U \times \RR^{k}$ and Lagrangian subspaces of
$T^{\ast} U$:

\begin{prop}
  If $L \subset T^{\ast} U$ is a Lagrangian submanifold and $p \in L$, it is locally given as the graph of $\phi |_{C} :C \rightarrow T^{\ast} U$, where $C=(D_{ q} F)^{-1} (0)$ and $\phi (x,q)=(x,D_{x} F(x,q) )$, for some
  generalized phase function $F$.
  
  Furthermore, we can assume:
  \begin{itemize}
    \item $k= \tmop{corank} (L,p)$
    
    \item $F(0,0)=0$
    
    \item $0 \in \RR^{k}$ is a critical point of $F(0, \cdot
    ):\RR^{k} \rightarrow \RR$
    
    \item $\frac{\partial^2 F}{\partial q_{i} \partial q_{j}} =0$ for all $i$ and $j$ in $1,\dots,k$
  \end{itemize}
\end{prop}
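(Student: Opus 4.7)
The plan is to construct $F$ by the classical generating-function recipe for a Lagrangian submanifold, then promote it to a Morse family by a partial Legendre transform in the ``vertical'' directions, and finally normalize so that the four bullet conditions hold at the origin.

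First I choose linear coordinates $x=(x',x'')$ on $U$ near the basepoint of $p$, with $\dim x'=n-k$ and $\dim x''=k$. Since $\operatorname{corank}(L,p)=k$, the image $d\pi(T_pL)\subset T_{\pi(p)}U$ has dimension $n-k$, and I arrange coordinates so that $d\pi(T_pL)=\operatorname{span}(\partial/\partial x'_i)$. Because $T_pL$ is Lagrangian and the vertical fiber is also Lagrangian, the kernel of $d\pi|_{T_pL}$ is then spanned by $\partial/\partial\xi''_i$, and consequently $(x',\xi'')\colon L\to\RR^n$ is a local diffeomorphism at $p$. After translation I may assume the basepoint of $p$ is the origin and that $\xi''|_p=0$.

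With these coordinates $L$ is locally the graph of a map $(x',\xi'')\mapsto(x'',\xi')$; because $L$ is Lagrangian, the pullback to $L$ of the tautological $1$-form $\xi\,dx$ is closed, and integration yields a function $S(x',\xi'')$, normalized by $S(0,0)=0$, with
$$\xi'=\frac{\partial S}{\partial x'},\qquad x''=-\frac{\partial S}{\partial \xi''}.$$
Now define $F\colon U\times\RR^k\to\RR$ by $F(x,q)=S(x',q)+q\cdot x''$. A direct computation yields $\partial F/\partial q=\partial S/\partial q+x''$ and $\partial F/\partial x'=\partial S/\partial x'$, $\partial F/\partial x''=q$. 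The zero set $C=\{\partial F/\partial q=0\}$ therefore maps bijectively onto $L$ via $\phi(x,q)=(x,D_xF)$, matching $(x',q,x'')$ to $(x,\partial S/\partial x',q)$. Since the matrix of partial derivatives of $\partial F/\partial q$ in $x''$ is the $k\times k$ identity, $D_qF$ is transverse to $0$, so $F$ is a generalized phase function and $\phi|_C$ is precisely the local graph description of $L$.

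For the normalizations: $F(0,0)=S(0,0)=0$, and $\partial F/\partial q(0,0)=x''|_p+\partial S/\partial\xi''(0)=0$ since $p\in L$, so $0$ is critical for $F(0,\cdot)$. The Hessian identity $\partial^2F/\partial q_i\partial q_j(0,0)=\partial^2 S/\partial\xi''_i\partial\xi''_j(0)$ is the delicate point: differentiating $x''=-\partial S/\partial\xi''$ along $L$ shows that a tangent vector with $dx'=0$ induces $dx''=-(\partial^2 S/\partial\xi''^2)(0)\,d\xi''$, so the subspace $\{v\in T_pL:dx'(v)=0\}$ has dimension $k-\operatorname{rank}(\partial^2 S/\partial\xi''^2)(0)$. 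Our hypothesis that the full corank is $k$ forces this rank to vanish, giving the last item. The main technical point is matching the polarization to the coordinates on $U$ in Step 1; once that is done the rest is direct Morse-family calculus.
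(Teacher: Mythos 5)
The paper's own ``proof'' of this proposition is simply a citation to Klok's 1983 paper (Proposition 1.2.4 and the remarks following 1.2.6), so there is no argument in the paper to compare against line by line. Your proposal supplies a complete, self-contained argument following the standard Weinstein/H\"ormander recipe for Morse families: split the base coordinates so that $d\pi(T_pL)$ is a coordinate plane, show that the conjugate variables $(x',\xi'')$ give a chart on $L$, produce a generating function $S$ by integrating the appropriate primitive of $\omega$, and then set $F(x,q)=S(x',q)+q\cdot x''$. This is almost certainly what Klok does as well, but you have actually carried out the construction, which is more informative than the paper's bare citation.

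The argument is correct in substance, and all the verifications (the bijection $\phi|_C\to L$, the transversality of $D_qF$ via the $x''$-block being $I_k$, and the normalizations $F(0,0)=0$ and $D_qF(0,0)=0$) check out. One exposition slip in the last paragraph: the subspace $\{v\in T_pL:dx'(v)=0\}$ always has dimension $k$, because $(x',\xi'')$ are coordinates on $L$; what has dimension $k-\operatorname{rank}(\partial^2 S/\partial\xi''^2)(0)$ is $\ker(d\pi|_{T_pL})=\{v\in T_pL:dx'(v)=0,\,dx''(v)=0\}$. In fact a cleaner route is available from your own Step~1: you already established $\ker(d\pi|_{T_pL})=\operatorname{span}(\partial/\partial\xi''_i)=\{v:dx'(v)=0\}$, so every $v$ in that $k$-plane has $dx''(v)=0$; combining this with $dx''(v)=-(\partial^2 S/\partial\xi''^2)(0)\,d\xi''(v)$ and the surjectivity of $d\xi''$ on that $k$-plane forces $\partial^2 S/\partial\xi''^2(0)=0$ directly. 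With that sentence tightened, the proof is complete.
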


\begin{proof}
  This is found in section 1 of \cite{Klok}, specifically in proposition 1.2.4 and the comments in page 320 after proposition 1.2.6.
\end{proof}

Given a germ of generalized phase function $F:\RR^{n} \times
\RR^{k} \rightarrow \RR$, the Lagrangian map is built in this
way: $D_{q} F$ is transverse to $\{0\}$, and we can assume the last $k$
$x$-coordinates are such that the derivative of $D_{q} F$ in those coordinates
is an invertible matrix. Let us split the $x$ coordinates in $(y,z) \in
\RR^{n-k} \times \RR^{k}$. Our hypothesis is that $D_{qz} F$
is invertible.

The implicit equations $D_{q} F=0$ defines functions $f_{j} :\RR^{n-k}
\times \RR^{k} \rightarrow \RR$ such that, locally near $0$,
$D_{qz} F(y,f(y,q),q)=0$.

\begin{dfn}
 A \emph{Lagrangian map} $\lambda:L\rightarrow M$ is the composition of a Lagrangian immersion $i:L\rightarrow T^\ast M$ with the projection $\pi:T^\ast  M\rightarrow M$ (a Lagrangian immersion is an immersion such that the image of sufficiently small open sets are Lagrangian submanifolds).
\end{dfn}
\begin{dfn}
 Two Lagrangian maps $\lambda_j=:L_j\rightarrow M_j$, with corresponding immersions $i_j:L\rightarrow T^\ast M$, $j=1,2$, are \emph{Lagrangian equivalent} if and only if there are diffeomorphisms $\sigma:L_1\rightarrow L_2$, $\nu:M_1\rightarrow M_2$  and $\tau:T^\ast M_1\rightarrow T^\ast M_2$ such that the following diagram commutes:
$$
\xymatrix{ L_1 \ar[d]^{\sigma} \ar[r]^{i_1} & T^\ast M_1 \ar[d]^{\tau} \ar[r]^{\pi_1} & M_1 \ar[d]^{\nu}\\
	   L_2 		       \ar[r]^{i_2} & T^\ast M_2 	       \ar[r]^{\pi_2} & M_2
}
$$
and $\tau$ preserves the symplectic structure.
\end{dfn}

Lagrangian equivalence corresponds to equivalence of generalized phase functions (this is proposition 1.2.6 in \cite{Klok}). Two generalized phase functions are equivalent if and only if we can get one from the other composing three operations:
\begin{enumerate}
 \item Add a function $g(x)$ to $F$. This has no effect on the functions
$f_{j}$.

\item Pick up a diffeomorphism $G:\RR^{n} \rightarrow \RR^{n}$,
and replace $F(x,q)$ by $F(G(x),q)$. If the map $G$ has the special form
$G(x)=G(y,z)=(g(y),h(z))$, the effect is to replace the map $(y,q) \rightarrow
(y,f(y,q))$ by $(y,q) \rightarrow (y,h^{-1} (f(g(y),q)))$.

\item Pick up a map $H:\RR^{n} \times \RR^{k} \rightarrow
\RR^{k}$ such that $D_{q} H$ is invertible, and replace $F(x,q)$ by
$F(x,H(x,q))$. If the map $H$ does not depend on the $z$ variables, the effect
is to replace the map $(y,q) \rightarrow (y,f(y,q))$ by $(y,q) \rightarrow
(y,f(y,H(y,q)))$
\end{enumerate}

\subsection{The singularities of a generic exponential map}

Using theorem 1.4.1 in {\cite{Klok}}, we get the following result: fix a smooth
manifold $M$, a point $p \in M$. For a residual set of metrics in $M$ the
exponential map $T_{p} M \rightarrow M$ is nonsingular except at a set
$\tmop{Sing}$, which is a smooth stratified manifold with the following strata
(we describe the different singularities in some detail below):
\begin{itemizedot}
  \item A stratum of codimension $1$ consisting of {\emph{folds}}, or
  Lagrangian singularities of type $A_{2}$.
  
  \item A stratum of codimension $2$ consisting of {\emph{cusps}}, or
  Lagrangian singularities of type $A_{3}$.
  
  \item Strata of codimension $3$ consisting of Lagrangian singularities of
  types $A_{4}$ (swallowtail), $D_{4}^{-}$ (elliptical umbilic) and
  $D_{4}^{+}$ (hyperbolic umbilic).
  
  \item We do not need to worry about the rest, which consists of strata of
  codimension at least $4$.
\end{itemizedot}

\begin{dfn}
  We define the sets $\mathcal{A}_{2}$, $\mathcal{A}_{3}$, etc as the set of all points of $V_{1}$ that have a singularity of type $A_{2}$, $A_{3}$, etc. 
  We also define $\mathcal{C}$ as the set of conjugate (singular) points and $\NC$ as the set of non-conjugate (non-singular) points.
\end{dfn}

Thus, $\tmop{Sing}$ is a smooth hypersurface of $\T$ near a conjugate point of
order $1$ (including $A_{2}$, $A_{3}$ and $A_{4}$ points), and is
diffeomorphic to the product of a cone in $\mathbb{R}^{3}$ with a cube near a
conjugate point of order $2$ (including $D_{4}^{\pm}$). The $A_{2}$ points are
characterized as those for which the kernel of the differential of the
exponential map is a vector line transverse to the tangent plane to
$\tmop{Sing}$.

Furthermore, the image by $\e$ of each stratum of canonical singularities is
also smooth. There might be strata of high codimension that are not uniform,
in the sense that the exponential map at some points in those strata may not
have the same type of singularity (in other words, the singularities are
{\emph{non-determinate}}).
This only happens in some strata of codimension at least $5$, and is not a problem for our arguments.

There are also other generic property that interests us: the image of the
different strata intersect ``transversally'':

Take two different points $x_{1} ,x_{2} \in \T$ mapping to the same point of
$M$, and assume $x_{1}$ and $x_{2}$ lie in $\mathcal{A}_{2} \cup
\mathcal{A}_{3} \cup \mathcal{A}_{4} \cup \mathcal{D}_{4}$. Then the points
$x_{1}$ and $x_{2}$ have neighborhoods $U_{1} ,U_{2}$ such that $\e (U_{1}
\cap \mathcal{C} )$ and $\e (U_{2} \cap \mathcal{C} )$ are transverse (each
pair of strata intersect transversally).
This follows from proposition 1 in page 215 of {\cite{Buchner Stability}}, with $p=2$,
so that $_{2} j_{2}^{k} H( \alpha )$ is transverse to the orbit in
$\mathbb{R}^{2} \times [J^{k}_{ 0} (n,1)]^{2}$ where the first jet is of type
$\mathcal{T}_{1}$ and the second one is of type $\mathcal{T}_{2}$.

For any singularity in the above list, we can choose coordinates near $x$ and
$\e (x)$ so that $\e$ is expressed by standard formulas. For example, the
formulas near an $A_{3}$ point are $(x_{1} , \ldots ,x_{n-1} ,x_{n} )
\rightarrow (x_{1}^{3} \pm x_{1} x_{2} ,x_{2} , \ldots ,x_{n} )$.

The coordinates that we will use are derived using generalized phase functions.
We list the generalized phase functions and the corresponding coordinates for the exponential function that derives from it for the singularities $A_{2}$, $A_{3}$, $A_{4}$ and $D_{4}^{\pm}$:
\addtolength{\leftmargini}{-10pt} %
\begin{itemizedot}\itemsep=10pt %
\small %
  \item $A_{2}$:\begin{tabular}{l}
    $F(x_{1} , \tilde{x}_{1} ,x_{2} ,x_{3} , \ldots ,x_{n} )= \frac{1}{3}
    x_{1}^{3} - \tilde{x}_{1} x_{1}$\\[5pt] %
    $\e : (x_{1} ,x_{2} ,x_{3} , \ldots ,x_{n} ) \rightarrow (x_{1}^{2} ,x_{2}
    ,x_{3} , \ldots ,x_{n} )$
  \end{tabular}
  
  \item $A_{3}$:\begin{tabular}{l}
    $F(x_{1} , \tilde{x}_{1} ,x_{2} ,x_{3} , \ldots ,x_{n} )= \frac{1}{4}
    x_{1}^{4} \pm \frac{1}{2} x_{2} x_{1}^{2} - \widetilde{x_{1}} x_{1}$
    \\[5pt] %
    $\e : (x_{1} ,x_{2} ,x_{3} , \ldots ,x_{n} ) \rightarrow (x_{1}^{3} \pm x_{1}
    x_{2} ,x_{2} ,x_{3} , \ldots ,x_{n} )$
  \end{tabular}

  \item $A_{4}$:\begin{tabular}{l}
    $F(x_{1} , \tilde{x}_{1} ,x_{2} ,x_{3} , \ldots ,x_{n} )= \frac{1}{5}
    x_{1}^{5} + \frac{1}{3} x_{2} x_{1}^{3} + \frac{1}{2} x_{3} x_{1}^{2} -
    \widetilde{x_{1}} x_{1}$\\[5pt] %
    $\e : (x_{1} ,x_{2} ,x_{3} , \ldots ,x_{n} ) \rightarrow (x_{1}^{4}
    +x^{2}_{1} x_{2} +x_{1} x_{3} ,x_{2} ,x_{3} , \ldots ,x_{n} )$
  \end{tabular}
   
  \item $D_{4}^{-}$:\begin{tabular}{l}
    $F(x_{1} ,x_{2} , \tilde{x}_{1} , \widetilde{x_{2}} ,x_{3} , \ldots ,x_{n}
    )= \frac{1}{6} x_{1}^{3} - \frac{1}{2} x_{1} x_{2}^{2} +x_{3} (
    \frac{1}{2} x_{1}^{2} + \frac{1}{2} x_{2}^{2} )- \widetilde{x_{1}} x_{1} -
    \widetilde{x_{2}} x_{2}$\\[5pt] %
    $\e\!:\!(x_{1} ,x_{2} ,x_{3}, \ldots ,x_{n} ) \rightarrow (
    \frac{1}{2} x_{1}^{2} - \frac{1}{2} x_{2}^{2} +x_{1} x_{3} ,-x_{1} x_{2}
    +x_{2} x_{3} ,x_{3}, \ldots ,x_{n} )$
  \end{tabular}
  
  \item $D_{4}^{+}$:\begin{tabular}{l}
    $F(x_{1} ,x_{2} , \tilde{x}_{1} , \widetilde{x_{2}} ,x_{3} , \ldots ,x_{n}
    )= \frac{1}{6} x_{1}^{3} + \frac{1}{6} x_{2}^{3} +x_{1} x_{2} x_{3} -
    \widetilde{x_{1}} x_{1} - \widetilde{x_{2}} x_{2}$\\[5pt] %
    $\e :(x_{1} ,x_{2} ,x_{3} ,x_{4} , \ldots ,x_{n} ) \rightarrow (
    \frac{1}{2} x_{1}^{2} +x_{2} x_{3} , \frac{1}{2} x_{2}^{2} +x_{1} x_{3}
    ,x_{3} ,x_{4} , \ldots ,x_{n} )$
  \end{tabular}
\end{itemizedot}
\addtolength{\leftmargini}{10pt} %

\begin{dfn}\label{dfn: canonical form and adapted coordinates}
 The above expression is the \strong{canonical form} of the exponential map at the singularity.
 The canonical form is only defined for the singularities in the above list.

We call \strong{adapted coordinates} any set of coordinates on $U\subset T_pM$ and $V\supset \e(U)$ for which the
expression of the exponential map is canonical.
\end{dfn}

\begin{dfn}
 Let $U$ be a neighborhood of adapted coordinates near a conjugate point $x$.
 The \strong{lousy metric} on $U$ is the metric whose matrix in adapted coordinates is the identity.
\end{dfn}

\begin{remark}
  We call this metric ``lousy'' because it does not have any geometric meaning, and it depends on the particular choice of adapted coordinates.
 However, it is useful for doing analysis.
\end{remark}

Although the adapted coordinates make the exponential map simple, radial geodesics from $p$ are no longer straight lines, and the spheres of constant radius in $\T$ are also distorted. 
We do not know of any result that gives an explicit canonical formula for the exponential map and also keeps radial geodesics in $\T$ simple. 
The results of section \ref{section: CDCs in adapted coords} suggest that this might be possible to some extent, but the classification that might derive from it must be finer than the one above. 
We will find examples showing that the radial vector can be placed in different, non-equivalent positions.

For example, near an $A_{3}$ point, $\mathcal{C}$ is given by $3x_{1}^{2}
=x_{2}$. The radial vector $r=(r_{1} , \ldots ,r_{n} )$ at $(0, \ldots ,0)$ is
transverse to $\mathcal{C}$, and thus must have $r_{2} \neq 0$. There are two
possibilities:
\begin{itemizedot}
  \item A point is $A_3(I)$ if and only if $r_{2} >0$.
  
  \item A point is $A_3(II)$ if and only if $r_{2} <0$.
\end{itemizedot}
Even though the exponential map has the same expression in both cases (for adequate coordinates), they differ for example in the following:

Let $x \in \mathcal{A}_{3} \cap V_{1}$ (a first conjugate point), and let $U$
be a neighborhood of $x$ of adapted coordinates. Then $\e (V_{1} \cap U$) is
a neighborhood of $\e (x$) if and only if $x$ is $A_3(I)$. A proof for this fact will be
trivial after section \ref{section: A3 are unequivocal}.

In fact, the above can be used as a characterization (for points in $\mathcal{A}_{3} \cap V_{1}$) that shows that the definition is independent of the adapted coordinates chosen. 
We remark that in a sufficiently small neighborhood of an $A_3(I)$ point, there are no $A_3(II)$ points, and viceversa.

We will get back to this distinction later, and we will also make a similar distinction with $D_{4}^{+}$ points.

\begin{remark}
  In the literature, it is common to see singularities of real functions of type $A_{3}$ further subdivided into $A_{3}^{+}$ and $A_{3}^{-}$ points. A canonical form for an
  $A_{3}^{\pm}$ singularity is 
  $$F^{\pm} (x_{1} , \tilde{x}_{1} ,x_{2} ,x_{3} ,
  \ldots ,x_{n} )= \pm \frac{1}{4} x_{1}^{4} - \frac{1}{2} x_{2} x_{1}^{2} -
  \widetilde{x_{1}} x_{1}$$
  When $F^{\pm}$ are generalized phase functions,
  both subtypes give equivalent singularities. However, in the work of
  Buchner, the same singularities appear, now as the energy function in a
  finite dimensional approximation to the space of paths with fixed endpoints.
  In this second context, it is not equivalent if a geodesic is a local
  minimum, or a maximum, of the energy functional, and it would make sense to
  use the distinction between $A_{3}^{+}$ and $A_{3}^{-}$, rather than the
  similar-but-not-the-same distinction between $A_3(I)$ and $A_3(II)$.
  
  This can also serve as an illustration that the classifications of singularities of the exponential map by F. Klok and M. Buchner are not equivalent, even though the final result is indeed quite similar. In the classification of F. Klok, the $A_3$ singularities are not divided into the two subclasses $A_3^+$ and $A_3^-$.
\end{remark}

\begin{dfn}\label{the set of generic metrics}
We define $\mathcal{H}_{M}$ as the set of Riemannian metrics for the smooth manifold $M$ such that the singular set of $\e$ is stratified by singularities of types $A_{2}$, $A_{3}$, $A_{4}$ and $D_{4}^{\pm}$ with the codimensions listed above, plus strata of different types with codimension at least $4$, and such that the images of any two strata intersect transversally as described above.
\end{dfn}

\begin{theorem}\label{thm: H_M residual}
 $\mathcal{H}_{M}$ is residual in the set of all Riemannian metrics on $M$.
\end{theorem}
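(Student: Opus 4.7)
The plan is to realize $\mathcal{H}_M$ as the intersection of two residual sets of Riemannian metrics, the first controlling the \emph{intrinsic} structure of the singular locus of $\e$ and the second controlling the \emph{extrinsic} incidences between the images of strata, and then to invoke Baire's theorem in the space of smooth Riemannian metrics on $M$ with the Whitney $C^\infty$ topology.

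First, I would apply Theorem 1.4.1 of \cite{Klok}, which asserts that for a residual set $\mathcal{H}_M^{(1)}$ of metrics the exponential map $\e:T_pM\to M$ is a Lagrangian map whose singularities are stable Lagrangian singularities; in the relevant range of dimensions this means precisely the list $A_2, A_3, A_4, D_4^\pm$ in codimensions $1,2,3,3,3$, and all remaining strata (which correspond to moduli-bearing singularities or unstable types) have codimension at least $4$. The stratification statement, as well as the smoothness of the image of each of the listed strata by $\e$, follows from the existence of the canonical generalized phase functions recalled above and the local normal forms derived from them. Thus the first defining condition of $\mathcal{H}_M$ holds on $\mathcal{H}_M^{(1)}$.

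Second, I would invoke the multijet transversality argument of \cite{Buchner Stability}, Proposition 1 on page 215, applied with $p=2$: one considers pairs $(x_1,x_2)\in T_pM\times T_pM$ with $\e(x_1)=\e(x_2)$ and requires that the pair of $k$-jets of $\e$ at $(x_1,x_2)$ is transverse to the product of the orbits corresponding to each singularity type. Since the orbits for $A_2, A_3, A_4, D_4^\pm$ have well understood codimensions and the set of metrics realizing this multitransversality is residual by standard multijet transversality (as in \cite{Klok} combined with the perturbation of metrics already used to prove the first condition), we obtain a residual set $\mathcal{H}_M^{(2)}$ on which the images $\e(U_1\cap\mathcal{C})$ and $\e(U_2\cap\mathcal{C})$ intersect transversally for any two singular points $x_1,x_2$ of the listed types with the same image. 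The extension from fixed point $p$ to a fixed smooth manifold $M$ is obtained by perturbing the metric in a neighborhood of $p$, which is what makes Klok's and Buchner's statements directly applicable.

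Finally, $\mathcal{H}_M=\mathcal{H}_M^{(1)}\cap\mathcal{H}_M^{(2)}$; since the Whitney $C^\infty$ topology makes the space of smooth Riemannian metrics on $M$ a Baire space, the intersection of two residual sets is residual. The main delicate point I anticipate is the bookkeeping required to pass from the abstract stable Lagrangian singularity classification in \cite{Klok} to the explicit list in Definition \ref{the set of generic metrics}, together with checking that Buchner's hypotheses (originally stated for the energy functional with fixed endpoints) translate correctly into the statement about transverse intersection of images of strata of $\e$; these are essentially verifications of dictionaries rather than new geometry, but they are where care is needed.
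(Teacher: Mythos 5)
Your proposal is correct and follows essentially the same route as the paper: the paper's proof simply cites the section's exposition of Klok's Theorem 1.4.1 (for the stratification of $\tmop{Sing}$ by $A_2, A_3, A_4, D_4^\pm$ plus codimension $\geq 4$ strata) and Buchner's Proposition 1 on page 215 (for the pairwise transversality of images of strata), and your decomposition of $\mathcal{H}_M$ as $\mathcal{H}_M^{(1)}\cap\mathcal{H}_M^{(2)}$ followed by Baire's theorem is exactly the implicit argument the paper compresses into one sentence. Your caveats about translating Buchner's fixed-endpoint energy functional setting to the exponential map are apt but are already acknowledged in the paper's surrounding discussion rather than in the proof itself.
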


\begin{proof}
This is the work of M. Buchner and F. Klok, as we have shown in this section.
\end{proof}

\section{Proof of Theorem \ref{main theorem generic}}\label{section: proof of main theorem generic}

In the previous section we have classified the points of $T_pM$ for a generic Riemannian manifold according to the singularity of the exponential map at that point.
We use that classification to split $T_pM$ into two sets, according to the role that they play when proving that the manifold is sutured:
\begin{dfn}
$$\mathcal{I} =( \NC \cup \mathcal{A}_{3}(I)  ) \cap V_{1}$$
$$\mathcal{J}=(\mathcal{A}_{2} \cup \mathcal{A}_{3}(II) \cup \mathcal{A}_{4}   \cup \mathcal{D}_{4}^{\pm} ) \cap V_{1}$$
\end{dfn}

\begin{theorem}\label{claim: hypothesis of the synthesis theorem}
Points in $\mathcal{I}$ are unequivocal, and any point in $\mathcal{J}$ is strongly linked to a point in $\mathcal{I}$ of smaller radius.
\end{theorem}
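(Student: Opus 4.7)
The plan is to treat the two assertions separately and, within each, to dispatch the various strata of singular points by reducing to the canonical forms listed in the previous section.

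For the first assertion, the case $x \in \NC \cap V_1$ is immediate: the exponential map is a local diffeomorphism at $x$, so any neighborhood $O$ on which $e_1$ is injective produces, via Cartan's Theorem \ref{Cartan's theorem}, an isometry $\varphi_O = e_2 \circ (e_1|_O)^{-1}$ witnessing that $O$ is unequivocal; taking a basis of such neighborhoods proves the unequivocal property in the sense of Definition \ref{definition:unequivocal point}. The $\mathcal{A}_3(I)$ case is the more delicate one. Working in adapted coordinates, the conjugate locus is locally the parabola $3x_1^2=x_2$, and the condition $r_2>0$ says that the radial direction points into the side $\{x_2>3x_1^2\}$ where $V_1$ lies. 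The key computation is to check that $\e$ restricted to $V_1\cap U$ is surjective onto a full neighborhood of $\e(x)$, that the two sheets of preimages agree on the common image, and that parallel transport of the $L$-related curvature tensor along the two sheets matches at the fold; this lets one glue the two locally defined Cartan isometries into a single isometry $\varphi_O$ on an open neighborhood of $\e(x)$, producing an unequivocal basis. I would carry this out by pulling back the candidate $\varphi_O$ via the canonical form and verifying agreement on the regular sheet using Lemma \ref{Dev_2^-1 o L o Dev_1(u) = v}.

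For the second assertion, I would proceed stratum by stratum, constructing a linking curve $Y:[0,l]\to V_1$ whose endpoints are $x\in\mathcal{J}$ and some $y\in\mathcal{I}$ with $|y|\le|x|$, and such that $e_1\circ Y$ is a fully tree-formed curve. The model case is $\mathcal{A}_2$: at a fold point the two local sheets of $e_1^{-1}(e_1(x))$ are identified by a smooth involution $\sigma$ of the singular hypersurface, and one can take a short curve $Y$ that runs from $x$ to its fold-partner $\sigma(x)$ along a pair of mutually inverse paths on the two sheets, producing a fully tree-formed $e_1\circ Y$. Iterating, if $|\sigma(x)| < |x|$ and $\sigma(x) \in \NC$ we are done; otherwise one uses the linking curve algorithm (invoked in the introduction) to chain together several such fold identifications and walk along $\mathcal{A}_2$ until reaching a point of $\mathcal{I}$ of strictly smaller radius. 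For the higher codimension strata $\mathcal{A}_3(II)$, $\mathcal{A}_4$, $\mathcal{D}_4^\pm$ I would use the fact that each such stratum is accumulated by $\mathcal{A}_2$ points: using the canonical forms one parameterizes the nearby fold sheets explicitly, and concatenates the elementary fold-linking curves obtained above. Monotonicity of $|\cdot|$ along the construction is guaranteed by the condition on the radial direction in the adapted coordinates (which is precisely what excludes $\mathcal{A}_3(I)$ from $\mathcal{J}$) together with Lemma \ref{norm dominated by exp}.

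I expect the main obstacle to be threefold. First, verifying the unequivocal property at $\mathcal{A}_3(I)$ requires a genuine gluing across the cusp, and one must check that the two Cartan isometries defined on the two smooth sheets do agree on the cusp image, not merely on regular points — this uses continuity of $x\mapsto I_x$ together with the structure of the fold degenerating into the cusp. Second, in the $D_4^\pm$ case the singular locus is a cone rather than a smooth hypersurface, so the elementary fold-identification on $\mathcal{A}_2$ does not extend by continuity through the umbilic, and the linking curve algorithm must navigate around the conical point while avoiding other strata; verifying that the algorithm terminates with strictly decreasing radius is the subtle step. Third, ensuring that at each iteration of the algorithm the tree-formed property is preserved (not only for a single fold-identification but for the concatenation) requires a careful bookkeeping of the identification map $T:[0,1]\to\Gamma$, using the second clause of Theorem \ref{Hebda3.3} for each elementary piece and gluing the trees at matched endpoints.
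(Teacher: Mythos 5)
Your plan aligns with the paper's architecture in broad strokes: Cartan's theorem handles $\NC$ points, a gluing argument in adapted coordinates handles $\mathcal{A}_3(I)$ (this is essentially Lemma~\ref{A3I are unequivocal}), and the linking curve algorithm produces the linking curves for $\mathcal{J}$, with termination of that algorithm being the delicate point (Theorem~\ref{thm: the linking curve algorithm stops in finite time}). But there are two substantive errors in the heart of the second assertion.

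First, the local picture you give at $\mathcal{A}_2$ is wrong. In the $A_2$ canonical form $(x_1,x_2,\dots)\mapsto(x_1^2,x_2,\dots)$ the involution $\sigma(x_1,\dots)=(-x_1,\dots)$ satisfying $e_1\circ\sigma=e_1$ fixes the singular hypersurface $\{x_1=0\}$ pointwise, so a point $x\in\mathcal{A}_2$ has $\sigma(x)=x$ and no local ``fold-partner'' on another sheet with the same image. There is simply no short loop through $x$ that is fully tree-formed and ends elsewhere. The mechanism the paper actually uses is the \emph{conjugate descending curve}: starting at $x\in\mathcal{A}_2$ one flows \emph{inside} the conjugate locus along the one-dimensional distribution $D = (\ker d\exp \oplus \langle r\rangle)\cap T\mathcal{C}$, strictly decreasing the radius, until one reaches an $A_3$ point; only there does the $A_3$ join (section~\ref{section: A3 joins}) allow one to cross to the regular side and start a retort. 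That is the elementary building block, not a fold involution, and without it your ``chain together several such fold identifications'' step has nothing to chain.

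Second, your monotonicity argument is not correct. Lemma~\ref{norm dominated by exp} only bounds the \emph{total variation} of $|Y(t)|$ by the length of $e_1\circ Y$; for a fully tree-formed curve that length is twice the length of the tree, so this gives no sign information and certainly does not force $|Y(l)|<|Y(0)|$. The radial condition that defines $A_3(II)$ vs.\ $A_3(I)$ is also not what drives the descent. What does is the ``unbeatable'' property of CDCs (Lemma~\ref{unbeatable lemma} and its quantitative version Lemma~\ref{slack lemma}): along a CDC the radius drops by the full length of the projected curve, while along any non-trivial retort it can rise by strictly less, so each ACDC--retort pair in a saturated aspirant curve contributes a strictly negative increment to $|\alpha(0)|$. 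This is precisely how Proposition~\ref{main properties of linking curves} obtains $|x|>|y|$. Without isolating that property you cannot conclude the endpoint has smaller radius, and the termination argument (which relies on a positive ``gain'' for transient neighborhoods, Lemma~\ref{typical sings have transient neighborhoods}) also collapses.

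Your identification of the three main obstacles --- gluing across the cusp, navigating the $D_4^\pm$ cones, and bookkeeping of the identification map $T$ --- is accurate and matches where the paper does real work; but the proposal as written lacks the CDC/unbeatability machinery that makes the second assertion true.
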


\begin{proof}[Proof of Main Theorem \ref{main theorem generic}]
By definition, $V_1=\mathcal{I}\cup \mathcal{J}$ for a metric in $\mathcal{H}_M$.
By Theorem \ref{claim: hypothesis of the synthesis theorem}, all metrics on $\mathcal{H}_M$ are sutured.
Main Theorem \ref{main theorem generic} follows by application of Theorem \ref{thm: H_M residual}.
\end{proof}

\subsection{$A_3(I)$ first conjugate points are unequivocal}\label{section: A3 are unequivocal}

\begin{lem}\label{A3I are unequivocal}
 Any $x\in V_1$ of type $A_3(I)$ is unequivocal.
\end{lem}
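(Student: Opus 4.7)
The plan is to exploit the Whitney cusp structure of the exponential map at an $A_3$ point in adapted coordinates. Choose adapted coordinates near $x$ so that $e_1$ has canonical form $(x_1,x_2,\ldots,x_n) \mapsto (x_1^3 - x_1 x_2, x_2,\ldots,x_n)$, with conjugate locus $\{x_2 = 3x_1^2\}$. The condition $A_3(I)$, namely $r_2 > 0$, places $V_1$ locally on the side $\{x_2 \leq 3x_1^2\}$ (the outside of the upward-opening parabola). For a shrinking sequence of box neighborhoods $U_n$ of $x$, set $W_n := V_1 \cap U_n$. A direct computation with the cubic $x_1 \mapsto x_1^3 - x_1 y_2$ shows that $e_1(W_n)$ is an open ``lens-shaped'' region and that these form a neighborhood base of $e_1(x)$ in $M_1$, so it suffices to prove each $W_n$ is unequivocal.

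The map $e_1|_{W_n}$ is at most $2$-to-$1$, being $2$-to-$1$ over the cusp region $\{y_2 > 0,\ |y_1| < \frac{2}{3\sqrt{3}}\,y_2^{3/2}\}$ inside $e_1(W_n)$ and $1$-to-$1$ elsewhere. Defining $\varphi\colon e_1(W_n) \to M_2$ by $\varphi(e_1(z)) := e_2(z)$ reduces the unequivocal property to showing $e_2(z^{(1)}) = e_2(z^{(2)})$ for any two $W_n$-preimages of a single point. The main step is to construct an explicit linking curve through $x$ between two such preimages: for $(y_1, y_2)$ in the cusp region with preimages $(\alpha, y_2)$ and $(\gamma, y_2)$ (the two solutions of $\xi^3 - \xi y_2 = y_1$ lying in $V_1$), define $Y_\xi(s) := ((1-s)^{1/2}\xi,\, (1-s)y_2,\, 0,\ldots,0)$ for $s \in [0,1]$. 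One checks that $Y_\xi([0,1]) \subset W_n$, that $e_1 \circ Y_\xi(s) = ((1-s)^{3/2}y_1,\, (1-s)y_2,\, 0,\ldots,0)$ (independent of the choice of root $\xi$), and that $Y_\xi$ is absolutely continuous (its speed is $O((1-s)^{-1/2})$, which is integrable on $[0,1]$). The concatenation $Y = Y_\alpha \ast (Y_\gamma)^{-1}$ then connects the two preimages through the $A_3$ point $x$, and $e_1 \circ Y$ is a hairpin retracing itself, so it is fully tree-formed with the reflection identification $T(t) = T(2-t)$. Lemma~\ref{lem: si x e y linked, I_x=I_y} then gives $e_2(z^{(1)}) = e_2(z^{(2)})$, so $\varphi$ is well defined.

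To conclude, on the open dense subset of $e_1(W_n)$ consisting of $e_1$-regular values, Cartan's theorem (via Lemma~\ref{lem: I_X cuando X es punto regular}) realizes $\varphi$ locally as $e_2\circ L\circ(e_1|_O)^{-1}$, which is a smooth local isometry. Since $\varphi$ is continuous on all of $e_1(W_n)$ (by continuity of $e_2$ and the well-definedness just shown) and coincides with a smooth isometry on a dense open set, it extends to a smooth isometry $\varphi\colon e_1(W_n)\to e_2(W_n)$ satisfying $\varphi\circ e_1 = e_2$ by construction. Hence each $W_n$ is an unequivocal set, and $\{W_n\}$ witnesses that $x$ is an unequivocal point.

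The main obstacle is the construction of an absolutely continuous linking curve through the cusp singularity. The ansatz $Y_\xi(s) = ((1-s)^{1/2}\xi, (1-s)y_2,\ldots)$ is designed precisely for this purpose: it trivializes the lift equation $x_1(s)^3 - x_1(s)\,x_2(s) = y_1(s)$ to the time-independent cubic $\xi^3 - \xi y_2 = y_1$, so each sheet's lift is available in closed form, and the only possibly singular component of the speed, $(1-s)^{-1/2}|\xi|/2$, is $L^1$ on $[0,1]$, guaranteeing absolute continuity up to and through the cusp point.
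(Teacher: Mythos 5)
Your proof is correct and rests on the same central mechanism as the paper's: produce a fully tree-formed hairpin curve inside $V_1$ and invoke the linking lemma. But the concrete construction differs in a way worth noting. The paper does not work directly with the two-to-one restriction of $e_1$ to $V_1\cap U_n$; it introduces the smaller injectivity domain $\bar A=\{x_2\le x_1^2\}$, links only the boundary pairs $(\pm t, t^2, x_3,\ldots)$ by the parabola arc $s\mapsto(s,s^2,x_3)$ (whose $e_1$-image $(0,s^2,x_3)$ is the hairpin), and then propagates $\varphi\circ e_1=e_2$ from $\bar A$ to the rest of the chart along radial geodesics. Your scaling contraction $Y_\xi(s)=((1-s)^{1/2}\xi,(1-s)y_2,\ldots)$ links an arbitrary $V_1$-preimage pair through the $A_3$ vertex in one step and dispenses with the radial-extension phase entirely, which is a clean shortcut.

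One caveat: your closing step, that $\varphi$ is continuous and coincides with a smooth local isometry on a dense open set and is therefore a smooth isometry, is not a valid general principle --- the map $(x,y)\mapsto(|x|,y)$ on $\mathbb{R}^2$ is continuous and a local isometry off a hyperplane, yet not smooth. What actually saves you is specific to this situation: at every point of the cusp-curve image other than $e_1(x)$ itself there is a \emph{persistent regular} $V_1$-preimage across the fold, so by well-definedness $\varphi$ agrees with a single smooth branch on a \emph{full} (not merely dense) neighbourhood of such points, and at $e_1(x)$ continuity of $z\mapsto I_z$ lets $d\varphi$ extend. The paper handles this point by explicitly invoking $I_x=I_{\bar x}$ from the linking lemma to match the differentials of the two branches --- you cite Lemma~\ref{lem: si x e y linked, I_x=I_y} but only use its $e_2$-agreement conclusion, not the $I$-agreement that your smoothness step actually requires. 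You should either spell out the persistent-branch observation or invoke the differential equality as the paper does.
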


\begin{proof}
Consider an $A_3(I)$ point $x$ in a manifold $(M_{1} ,p_{1} )$ whose curvature is $L$-related to the curvature of $(M_{2} ,p_{2} )$, and use adapted coordinates near $x=(0,0,0)$, in an arbitrarily small neighborhood $O$:
\begin{itemizedot}
  \item Define $\gamma (x_{1} ,x_{3} )=x_{1}^{2}$.
  
  \item Let $A$ be the subset of $O$ given by $x_{2} < \gamma (x_{1} ,x_{3}
  )$. $ e_{1}$ maps diffeomorphically $A$ onto a big subset of $ e_{1} (O)$. Only
  the points with $x_{1} =0,x_{2} \geqslant 0$ are missing. $x$ is $A_3(I)$, so
  $\bar{A} \subset V_{1}$, and $ e_{1} (O \cap V_{1} )$ is open.
  
  \item For any $(x_{1} ,x_{3} )$, the pair of points $(x_{1} ,x_{1}^{2}
  ,x_{3} )$ and $(-x_{1} ,x_{1}^{2} ,x_{3} )$ map to the same point by
  $ e_{1}$, the curve $t \rightarrow (t,t^{2} ,x_{3} )$, $t \in [-x_{1} ,x_{1}
  ]$ maps to a tree-formed curve. This shows that the two points map to the
  same point by $ e_{2}$ as well.
  
  \item Define a map $\varphi :  e_{1} (O) \rightarrow  e_{2} (O)$ by $\varphi
  (p)=  e_{2} (a)$, for any $a \in \bar{A}$ such that $p=  e_{1} (a)$. By the
  above, this is unambiguous.
  
  \item For a pair of linked points $x=(x_{1} ,x_{1}^{2} ,x_{3} , \ldots ,x_{n} )$ and $\bar{x} =(-x_{1} ,x_{1}^{2} ,x_{3} , \ldots ,x_{n} )$, we have two different local isometries from a neighborhood of $p=  e_{1} (x)=  e_{1} (  \bar{x} )$ into $M_{2}$, given by $ e_{2} \circ (  e_{1} |_{O_{i}} )^{-1}$, for neighborhoods $O_{i}$ of $x$ and $\bar{x}$ such that $ e_{1} (O_{1} )= e_{1} (O_{2} )$ and we need to show that they agree. 
  They both send $p$ to the same point, and we only need to check that their differential at $p$ is the same.
  These are the linear isometries $I_x$ and $I_{\bar{x}}$, and they agree by \ref{lem: si e_1 o Y es tree formed, sus extremos tienen la misma I_Y(.)}.

  \item We know that $\varphi \circ  e_{1} (x)=  e_{2} (x)$, for $x \in
  \bar{A}$. Let $y \in O \setminus \bar{A}$. There is a unique point $x$
  in the radial line through $y$ in $\partial A$. We know $\varphi \circ  e_{1}
  (x)=  e_{2} (x)$, and the radial segment from $x$ to $y$ map by both $\varphi
  \circ  e_{1}$ and $ e_{2}$ to a geodesic segment with the same length,
  starting point and initial vector. We conclude $\varphi \circ  e_{1} (y)=
   e_{2} (y)$.
\end{itemizedot} 
\end{proof}

\begin{remark}
  The only place where we used that the point is $A_3(I)$ is when we assumed
  that $A \subset V_{1}$.
\end{remark}

\subsection{Conjugate flow}\label{section: conjugate flow}

We now introduce the main ingredient in the construction of the linking curves.
The idea in the definition of conjugate flow was used in lemma 2.2 of \cite{Hebda82} for a different purpose.

Near a conjugate point of order 1, the set $C$ of conjugate points is a smooth hypersurface.
Furthermore, we know $\ker  d F$ does not contain $r$ by Gauss' lemma.
Thus we can define a one dimensional distribution $D$ within the set of points of order $1$ by the rule:
\begin{equation}
  \label{1d distribution at conjugate points} D= ( \ker  d F \oplus <r> ) \cap
  T  C
\end{equation}
\begin{dfn}
  \label{definition: conjugate flow}A {\strong{conjugate descending curve}}
  (CDC) is a smooth curve, consisting only of $A_{2}$ points, except possibly
  at the endpoints, and such that the speed vector to the curve is in $D$ and
  has negative scalar product with the radial vector $r$. Therefore, the
  radius is decreasing along a CDC.
  
  The {\strong{canonical parametrization}} of a CDC $\gamma$ is the one that
  makes $d\e(\gamma')$ a unit vector. By Gauss lemma, it is also the one
  that makes \mbox{$dR( \gamma' )\!=\!1$}.
\end{dfn}

\begin{dfn}
  \label{definition: retort}Let $\alpha :[0,t_{1} ] \rightarrow T_{p} M$ be a
  smooth curve, and $x \in \T$ be a point such that $\e (x)= \e ( \alpha
  (t_{1} ))$. A curve $\beta :[0,t_{1} ] \rightarrow T_{p} M$ is a
  {\strong{retort}} of $\alpha$ starting at $x$ if and only if $\alpha (t) \neq
  \beta (t_{1} -t)$ for any $t \in [0,t_{1} )$, but $\e ( \alpha (t))= \e ( \beta
  (t_{1} -t))$ for any $t \in [0,t_{1} ]$, and $\beta (t)$ is NC for any $t
  \in (0,t_{1} )$. Whenever $\beta$ is a retort of $\alpha$, we say that
  $\beta$ {\strong{replies}} to $\alpha$. A {\strong{partial retort}} of
  $\alpha$ is a retort of the restriction of $\alpha$ to a subinterval $[t_{0}
  ,t_{1} ]$, for $0<t_{0} <t_{1}$.
\end{dfn}

We have seen that near an $A_{2}$ point $x$, there are coordinates near $x$
and $\e (x)$ such that $\e$ reads $(x_{1} ,x_{2} , \ldots ,x_{n} ) \rightarrow
(x_{1}^{2} ,x_{2} , \ldots ,x_{n} )$. The $A_{2}$ points are given by $x_{1}
=0$, and no other point $y \neq x$ maps to $\e (x)$. Thus, there is a
neighborhood $U$ of any CDC such that any CDC contained in $U$ has no non-trivial retorts contained in $U$.

\begin{lem}
  \label{unbeatable lemma}Let $x$ be an $A_{2}$ point. Then there is a
  $C^{\infty}$ CDC $\alpha :[0,t_{0} ) \rightarrow \T$ with $\alpha (0)=x$.
  The CDC is unique, up to reparametrization. Furthermore:
  \begin{itemizedot}
    \item $| \alpha (0)|-| \alpha (t_{0} )|= \tmop{length} ( \e \circ \alpha
    )$
    
    \item If $\beta$ is a non-trivial retort of $\alpha$, then of course,
    $$\tmop{length} ( \e \circ \alpha )= \tmop{length} ( \e \circ \beta ),\,
    \text{but}\, | \beta (t_{0} )|-| \beta (0)|< \tmop{length} ( \e \circ
\beta ).$$ 
We say that segments of descending conjugate flow are {\strong{unbeatable}}.
  \end{itemizedot}
\end{lem}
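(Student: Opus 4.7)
My plan has three parts: existence and uniqueness of the CDC $\alpha$, the length equality, and the strict inequality for retorts. For existence and uniqueness, I would first verify that $D$ is a smooth rank-$1$ distribution near $x$. By the $A_2$ characterization, the conjugate set $\mathcal{C}$ is a smooth hypersurface near $x$ and $\ker dF$ is a smooth line subbundle of $T\T|_{\mathcal{C}}$ transverse to $T\mathcal{C}$; Gauss's lemma gives $d\e(r)\ne 0$, so $\ker dF$ is also transverse to $\langle r\rangle$, making $\ker dF\oplus\langle r\rangle$ a smooth rank-$2$ subbundle. Since $\ker dF\oplus T\mathcal{C}=T\T$, a dimension count shows that intersection with $T\mathcal{C}$ yields a smooth rank-$1$ line field $D$. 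Its integral curve through $x$, oriented so that the radial component of the speed is negative and reparametrized so that $|d\e(\alpha')|=1$, is then the unique (up to reparametrization) smooth descending CDC.

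For the length equality I would decompose $\alpha'=ar+bv$ with $v\in\ker dF$; Gauss's lemma forces $\ker d\e\subset r^{\perp}$, so $v\perp r$ in $\T$, and then $d\e(\alpha')=a\,d\e(r)$ has norm $|a|$ while $\tfrac{d}{dt}|\alpha|=a$. In canonical descending parametrization $a\equiv -1$, whence
\[
|\alpha(0)|-|\alpha(t_0)|=\int_0^{t_0}(-a)\,dt=\int_0^{t_0}|d\e(\alpha')|\,dt=\tmop{length}(\e\circ\alpha).
\]

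For the retort $\beta$, the analogous decomposition $\beta'=a_\beta r_\beta+v_\beta$ with $v_\beta\perp r_\beta$ gives $|d\e(\beta')|^2=a_\beta^2+|d\e(v_\beta)|^2$ and $\tfrac{d}{dt}|\beta|=a_\beta$, so Lemma~\ref{norm dominated by exp} yields
\[
|\beta(t_0)|-|\beta(0)|=\int_0^{t_0}a_\beta\,dt\;\le\;\int_0^{t_0}|d\e(\beta')|\,dt=\tmop{length}(\e\circ\beta),
\]
with equality only when $a_\beta\ge 0$ and $d\e(v_\beta)=0$ almost everywhere. Because $\beta(t)\in\NC$ for $t\in(0,t_0)$, $d\e_{\beta(t)}$ is injective and so $v_\beta\equiv 0$, forcing $\beta$ to be a radial segment traversed outward; then $\e\circ\beta$ (and hence $\sigma=\e\circ\alpha$) would lie on a single geodesic emanating from $p$, so each tangent $d\e(r_{\alpha(t)})$ would have to be parallel to the fixed tangent of that common geodesic at $\sigma(t)$. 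This forces $\alpha(t)$ and $\beta(t_0-t)$ onto the same ray in $\T$ at different radii, which in turn forces the underlying geodesic to be non-injective on an interval of parameters, an incompatibility with the transverse structure of $\mathcal{C}$ at an $A_2$ point in the generic setting. Ruling out this ``radial collapse'' of a non-trivial retort is the heart of the lemma; everything else follows from Gauss's lemma and a dimension count.
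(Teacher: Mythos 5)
Your overall approach mirrors the paper's proof closely: decompose the velocity into a radial part and a part in $\ker d\e$ (for $\alpha$) or perpendicular to $r$ (for $\beta$), apply Gauss's lemma to compute $|d\e(\cdot)|$, and use injectivity of $d\e$ at $\NC$ points to get the strict inequality. Your treatment of existence/uniqueness and of the length equality is correct and actually more explicit than the paper's (which just invokes smoothness of $\mathcal{A}_2$ and $D$, and writes $|d\e(\alpha')|=a$ without the absolute value).

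The one place where you wander off is the conclusion of the retort argument. You correctly reduce to ruling out $v_\beta\equiv 0$, i.e.\ $\beta$ radial, and you correctly observe that this would force $\e\circ\alpha$ to lie on a single radial geodesic $\gamma_w$ from $p$; since $(\e\circ\alpha)'(t)$ is a multiple of $d\e_{\alpha(t)}(r_{\alpha(t)})$, uniqueness of geodesics then forces $\alpha(t)$ onto the ray through $w$. But at that point you have already won: $\alpha$ radial means $\alpha'$ is a multiple of $r$, which contradicts the definition of a CDC, since $D=(\ker d\e\oplus\langle r\rangle)\cap T\mathcal{C}$ at an $A_2$ point consists of vectors with a \emph{nonzero} component in $\ker d\e$ (because $\langle r\rangle\cap T\mathcal{C}=0$ and $\ker d\e\cap T\mathcal{C}=0$ there). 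The further discussion about the geodesic being ``non-injective on an interval'' and an ``incompatibility with the transverse structure of $\mathcal{C}$ in the generic setting'' is unnecessary and hand-wavy — and genericity is not actually needed for this lemma. The paper hides all of this behind the terse assertion that $e_1\circ\beta$ is not a geodesic, so you are filling a real gap, but the filling should end one step sooner.
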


\begin{proof}
  Both $\mathcal{A}_{2}$ and the distribution $D$ are smooth near $x$, so the
  first part is standard.
  
  We also compute:
  \[ \tmop{length} ( \e \circ \alpha ) = \int |( \e \circ \alpha )' |= \int |d
     \e ( \alpha' )| \]
  By definition of $D$, $\alpha' =ar+v$ is a linear combination of a multiple
  of the radial vector and a vector $v \in \ker (d \e )$. By the Gauss lemma,
  $|d \e ( \alpha' )|=a$. On the other hand, $v$ is tangent to the spheres of
  constant radius, so:
  \[ | \alpha (0)|-| \alpha (t_{0} )|= \int \frac{d}{dt} | \alpha |= \int a=
     \tmop{length} ( \e \circ \alpha ) \]
  For a retort $\beta :[0,t_{1} ] \rightarrow \T$, we also have $\beta' =b r+v$
  for a function $b:[0,t_{1} ] \rightarrow \mathbb{R}$ and a vector $v(t) \in
  T_{\beta (t)} ( \T )$ that is always tangent to the spheres of constant
  radius, and $v(t)$ is not identically zero because $ e_{1} \circ \beta$ is
  not a geodesic. However, $\beta (s)$ is non-conjugate, so $|d \e ( \beta'
  )|= \sqrt{b^{2} +|d \e (v)|^{2}} >b$. The result follows.
\end{proof}

\begin{remark}
 We recall that the plan is to build linking curves, whose composition with the exponential is tree formed.
 If a linking curve contains a CDC, it must also contain a retort for that CDC.
 The ``unbeatable'' property of CDCs is interesting, because the radius decreases along a CDC and along the retort it never increases as much as it decreased in the first place.
\end{remark}

\subsection{CDCs in adapted coordinates near $A_{3}$ points\label{subsection: CDCs in adapted coords near A3}}

As we mentioned in section \ref{section: generic metric}, the radial vector field, and the spheres of constant radius of $\T$, that have very simple expressions in standard linear coordinates in $\T$, are distorted in canonical coordinates.
Thus, the distribution $D$ and the CDCs do not always have the same expression in adapted coordinates.
In this section, we study CDCs near an $A_{3}$ point.
We will use the name $R: \T\rightarrow \mathbb{R}$ for the radius function, and $r$ for the radial vector field, and we assume that our conjugate point is a first conjugate point (it lies in $\partial V_{1}$).

In a neighborhood $O$ of special coordinates of an $A_{3}$ point,
$\mathcal{C}$ is given by $3x_{1}^{2} =x_{2}$. At each $A_{3}$ point, the
kernel is spanned by $\frac{\partial}{\partial x_{1}}$. At points in
$\mathcal{C}$, we can define a 2D distribution $D_{2}$, spanned by $r$ and
$\text{$\frac{\partial}{\partial x_{1}}$}$. We extend this distribution to all
of $O$ in the following way:

\begin{dfn}
For any point $x \in O$, there are $y \in   \mathcal{C}$ and $t_{0}$ such that $x= \phi_{t_{0}} ( y )$, where $\phi_{t}$ is the radial flow, and $y$ and $t$ are unique. Define $D_{2} ( x )$ as $( \phi_{t_{0}} )_{\ast} ( D_{2} ( y ) )$.
\end{dfn}

The reader may check that $D_2$ is integrable.
Let $P$ be the integral manifold of $D_{2}$ through $x_{0} =(0, 0 ,0)$.
We can assume $P$ is a graph over the $x_1,x_2$ plane: $x_3=p(x_1,x_2)$.
$\mathcal{A}_{3}$ is transverse to $D_{2}$, so $\{x_{0} \}= \mathcal{A}_{3} \cap P$.
The integral curve $C$ of $D$ through $x_{0}$ is contained in $P$, and $C\setminus \{ x_{0} \}$ consists of two CDCs.
We claim that if the point is $A_3(I)$, the two CDCs descend into $x_{0}$, but if the point is $A_3(II)$, they
start at $x_{0}$ and flow out of $O$.
$P$ is also obtained by flowing the CDC with the radial vector field.

We can assume that $r$ is close to $r(x_{0} )$ in $O$. The tangent $T_{x}$ to
the sphere of constant radius $\{y:R(y)=R(x)\}$ must contain
$\frac{\partial}{\partial x_{1}}$ (the kernel of $d \e$) if $x \in
\mathcal{C}$, by Gauss lemma, and we can assume that the angle between $T_{x}$
and $\frac{\partial}{\partial x_{1}}$ is small if $x \nin \mathcal{C}$.

The curves $\{R(x)=R_1\}\cap P$, for any $R_1$, are all smooth graphs over the $x_1$ axis.
We claim that the curve $\{R(x)=R(x_0)\}\cap P$ may not intersect $3x_{1}^{2} < x_{2}$.
Assume that $R(y)=R(x_0)$ for some $y=(y_1,y_2)$ with $y_1<0$ and $y_2>3y_1^2$.
Then there is a curve $\{R(x)=R(x_0)-\varepsilon\}\cap P$, for some $0<\varepsilon<<1$, must intersect $\mathcal{C}$ at a point $(x_1,3x_1^2,p(x_1,3x_1^2))$ with $x_1<0$, and the tangent to $\{R(x)=R_1\}\cap P$ must be $\frac{\partial}{\partial x_{1}}$.
Taking coordinates, $x_1,x_2$ in $P$, we see it is not possible that a graph $(x_1,t(x_1))$ over the $x_1$ axis has $t(0)<0$, $t(y_1)>3y_1^2$, and intersect the curve $\mathcal{C}\cap P$ only with horizontal speed.

It follows that $x_0$ is a local maximum, or minimum, of $R$, within $\mathcal{C}$.
If $r_{2} >0$ ($A_3(I)$ points), then $R(x)\geq R(x_{0}$) for $x \in \mathcal{C}$, while $r_{2} <0$ ($A_3(II)$ points), implies $R(x)\leq R(x_{0}$) for $x \in \mathcal{C}$.

Thus, $A_3(I)$ points are {\emph{terminal}} for the conjugate flow, but $A_3(II)$ points are not.
We have proved the following:

\begin{lem}\label{lemma: CDCs near A3 points}
 In a neighborhood $O$ of adapted coordinates near an $A_3$ point $x_0$:
 \begin{itemize}
  \item $\mathcal{C}$ is foliated by integral curves of $D$.
  \item $\mathcal{A}_2$ is foliated by CDCs.
  \item If $x_0$ is $A_3(I)$, exactly two CDCs in $O$ flow into each $A_3$ point. If $x_0$ is $A_3(II)$, exactly two CDCs in $O$ flow out of each $A_3$ point.
  \item If $x_0$ is $A_3(I)$, every CDC in $O$ flow into some $A_3$ point. If $x_0$ is $A_3(II)$, every CDC in $O$ flow out of some $A_3$ point.
 \end{itemize}
\end{lem}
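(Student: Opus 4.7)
The plan is to organize the preceding discussion into four steps, using the integrable distribution $D_2$ and its leaves to reduce every claim to a two-dimensional analysis on a single leaf through $x_0$.

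First, I would show that $D=(\ker d\e\oplus\langle r\rangle)\cap T\mathcal{C}$ is a smooth one-dimensional distribution on all of $\mathcal{C}\cap O$, including at $x_0$. Away from $x_0$ the kernel direction $\partial_{x_1}$ is transverse to $T\mathcal{C}$, so the $2$-plane $\ker d\e\oplus\langle r\rangle$ meets the hyperplane $T\mathcal{C}$ transversally in a line; at $x_0$ itself $\partial_{x_1}\in T\mathcal{C}$ but $r$ is transverse to $T\mathcal{C}$ because $r_2\neq 0$, so $D=\langle\partial_{x_1}\rangle$. This yields the foliation of $\mathcal{C}$ by integral curves of $D$, and the restrictions to $\mathcal{A}_2$ are CDCs up to the choice of orientation, giving the first two bullets.

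Second, I would check that $D_2$ is integrable. By construction $D_2$ is the pushforward of $D$ by the radial flow, so it contains $r$ and is invariant under the $r$-flow; hence for any second local spanning field $w$ of $D_2$ one has $[r,w]\in D_2$, and Frobenius applies. Denote the two-dimensional leaves by $P_\xi$. Since $\mathcal{A}_3\cap O$ is the curve $\{x_1=x_2=0\}$, transverse at $x_0$ to $T_{x_0}P=\mathrm{span}(r,\partial_{x_1})$, each leaf meets $\mathcal{A}_3$ in exactly one point $y_\xi$, and every CDC in $O$ is contained in a unique such leaf.

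Third, I would carry out the analysis inside the distinguished leaf $P=P_0$ already sketched in the paragraphs before the lemma: $C=\mathcal{C}\cap P$ is the smooth graph $x_2=3x_1^2$ in coordinates $(x_1,x_2)$ on $P$, and by Gauss' lemma the level curves of $R|_P$ meet $C$ tangentially along the kernel direction $\partial_{x_1}$. The graph-comparison argument (the level curve of $R$ through $x_0$ cannot enter $\{x_2>3x_1^2\}$, as this would force some nearby level curve to meet $C$ with non-horizontal tangent, violating the Gauss-lemma tangency) shows that $R|_C$ has a strict local extremum at $x_0$: a minimum when $r_2>0$ and a maximum when $r_2<0$. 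The CDC orientation (decreasing $R$) then forces, in the $A_3(I)$ case, both branches of $C\setminus\{x_0\}$ to be CDCs ending at $x_0$; in the $A_3(II)$ case they both issue from $x_0$. The fourth bullet follows by repeating the same analysis in every leaf $P_\xi$, each with its unique $A_3$ point $y_\xi$. The step I expect to be the main obstacle is the extremum property of $R|_C$ at $x_0$: the cleanest route avoids any direct second-order computation (the lousy metric in adapted coordinates has no intrinsic meaning) and relies instead on the qualitative graph-comparison, using the Gauss-lemma tangency at every nearby point of $\mathcal{C}$. Once this is in hand, the remaining statements reduce to bookkeeping with Frobenius' theorem and uniqueness of integral curves of the smooth one-dimensional distribution $D$.
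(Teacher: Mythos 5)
Your proposal is correct and follows essentially the same route as the paper: define $D$ and the extended two-plane field $D_2$, verify integrability, and run the Gauss-lemma graph-comparison argument inside the two-dimensional leaf through the $A_3$ point to conclude that $R$ restricted to $\mathcal{C}$ has a strict local minimum (type I) or maximum (type II) there. You make explicit two points the paper leaves implicit — the smoothness of the line field $D$ across the $A_3$ point (via transversality of $r$ to $T\mathcal{C}$) and the repetition of the leaf analysis over the whole $D_2$-foliation to obtain the last two bullets — but the core argument is the same.
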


\subsection{$A_{3}$ joins\label{section: A3 joins}}

We can continue a CDC as long as it stays within a stratum of $A_{2}$ points.
As we have seen, a CDC may enter a different singularity. The most important
situation is when the CDC reaches an $A_{3}$ point, because then we can start
a non-trivial retort right after the CDC.

The set of conjugate points is a graph over the $x_{1} ,x_{3}$ plane: $x_{2} = \alpha (x_{1} ,x_{3} )=3x_{1}^{2}$.
A CDC is written $t \rightarrow (t,3t^{2},x_{3} (t))$, for $t \in [t_{0} ,0]$, finishing at an $A_{3}$ point $(0,0,x_{3} (0))$.
We can start a retort for this segment of CDC starting at the $A_{3}$ point.
The retort for this CDC is given explicitly by $t \rightarrow (-2t,3t^{2} ,x_{3}(t))$.

These curves, composed of a segment of CDC plus the corresponding retort, map to a fully tree-formed map that shows that the point $(t,3t^{2} ,x_{3} )$ is linked to $(-2t,3t^{2} ,x_{3} )$.
We say that the CDC and the retort given above are joined with an {\strong{$A_{3}$ join}}.

\subsection{Avoiding some obstacles}\label{section:avoiding some obstacles}

In order to build linking curves, it is simpler to replace CDCs with curves that are close to CDC curves, but avoid certain ``obstacles''. The following remark helps in that respect:

{\emph{A curve that is sufficiently $C^{1}$-close to a
CDC is also unbeatable}}. Actually, we can say more: the greater the angle
between $r_{x}$ and $\ker d_{x}  e_{1}$, the more we can depart from the CDC.

\begin{dfn}
  The \strong{slack} $A_{x}$ at a first order conjugate point $x$ is the absolute value of the sine of the angle between $D_{x}$ and $\ker  (d_{x}  e_{1} )$.
\end{dfn}

\begin{remark}
  The slack is positive if and only if the point is $A_{2}$
\end{remark}

\begin{lem}
  \label{slack lemma}For any positive numbers $R>0$ and $a>0$ there are constants $c>0$ and $\varepsilon>0$ depending
  on $M$, $R$ and $a$ such that the following holds:

  If a smooth curve $\alpha :[0, T ] \rightarrow \Tone$ of $A_{2}$ points satisfies the following properties:
  \begin{enumerate}
   \item $| \alpha (0)| \leqslant R$
   \item $\langle \alpha'(t), r_{\alpha(t)}\rangle <0$ for all $t\in[0,T]$
   \item $A_{\alpha(t)}>a$  for all $t\in[0,T]$
   \item $\alpha' (t)$ is within a cone around $D$ of amplitude $c$ for all $t\in[0,T]$
  \end{enumerate}
  then it holds that:
  \begin{itemize}
   \item $\alpha$ is $\varepsilon$-unbeatable: any retort $\beta$ satisfies  
   $$| \beta (T )|-| \beta (0)|<\left(| \alpha (0)|-| \alpha (T )| \right)(1 - \varepsilon)$$
   \end{itemize}
\end{lem}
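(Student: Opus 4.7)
The plan is to combine two quantitative estimates. First, the hypotheses force the length of $e_1\circ\alpha$ to exceed the radius drop $|\alpha(0)|-|\alpha(T)|$ by only a small factor controlled by $c$ and $a$. Second, any retort $\beta$ climbs in radius at a rate strictly less than its image-speed, by a margin bounded below through a uniform angular gap between geodesic directions at distinct preimages of the same point in $M_1$. The $\varepsilon$ of the conclusion emerges from combining these.

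Parametrize $\alpha$ by arc-length of $e_1\circ\alpha$ and decompose
\[
\alpha'(t) = \rho(t)\,r + u_1(t) + u_2(t),
\]
where $r$ is the unit radial vector at $\alpha(t)$, $u_1\in\ker de_1\cap\{r\}^\perp$, and $u_2\perp(\ker de_1\oplus\langle r\rangle)$. The Gauss lemma yields $\rho^2+|de_1(u_2)|^2\equiv 1$ and $\tfrac{d}{dt}|\alpha|=\rho<0$, so $\int_0^T(-\rho)\,dt=|\alpha(0)|-|\alpha(T)|$. Writing a unit generator of $D$ as $d=\rho_0 r+u_{1,0}$ with $|\rho_0|=A_{\alpha(t)}>a$ and using $\sphericalangle(\alpha',D)\le c$, a direct trigonometric computation gives $|u_2|\le|\alpha'|\sin c$ and $|\rho|\ge|\alpha'|(a\cos c-\sin c)$, so once $c$ is small enough in terms of $a$,
\[
\frac{|de_1(u_2)|}{|\rho|}\le\frac{2K\sin c}{a},
\]
where $K=\sup\|de_1\|$ on the compact region $\overline{B_R}\cap\mathcal{A}_2\cap\{A\ge a/2\}$. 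Parametrizing $\beta$ similarly and writing $\beta'=b\,r_\beta+v_\beta$ with $b^2+|de_1(v_\beta)|^2\equiv 1$ yields $|\beta(T)|-|\beta(0)|=\int_0^T b\,dt$.

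At each $t$ the point $y=\beta(T-t)$ is a non-conjugate preimage of $e_1(\alpha(t))$ distinct from $\alpha(t)$, and $\beta'(T-t)=-(de_1|_y)^{-1}\bigl(\rho\,de_1(r_\alpha)+de_1(u_2)\bigr)$. Taking the inner product with $r_y$ and invoking Gauss's lemma gives the pointwise estimate
\[
b\le|\rho|\cos\theta(\alpha(t),y)+|de_1(u_2)|,
\]
where $\theta(\alpha,y)$ is the angle at $e_1(\alpha)=e_1(y)$ between the two distinct geodesic segments from $p_1$ through $\alpha$ and through $y$ (negative values of $\cos\theta$ only sharpen the bound). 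The hypothesis $\alpha\subset\mathcal{A}_2$ is essential here: it forces $y$ onto a different sheet of $e_1^{-1}$ from the one locally containing $\alpha$, so $y$ cannot merge with $\alpha(t)$ through the local $A_2$ model.

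A compactness argument, confining $\beta$ to a bounded region via Lemma~\ref{norm dominated by exp}, produces a uniform lower bound $\theta(\alpha,y)\ge\theta_0>0$ depending only on $M$, $R$, and $a$. Setting $\varepsilon:=(1-\cos\theta_0)/2$ and choosing $c$ so small that $2K\sin(c)/a\le(1-\cos\theta_0)/2$, the pointwise bound $b\le(1-\varepsilon)|\rho|$ integrates to
\[
|\beta(T)|-|\beta(0)|\le(1-\varepsilon)\bigl(|\alpha(0)|-|\alpha(T)|\bigr).
\]
The main obstacle is establishing the uniform angular gap $\theta_0$: pointwise positivity of $\theta$ is immediate from the fact that distinct geodesics reaching a common endpoint do so with distinct tangent directions, but uniformity requires carefully identifying the precompact domain of admissible pairs $(\alpha,y)$ and ruling out degenerate limits where $\theta\to 0$.
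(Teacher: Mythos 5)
Your proof follows the same overall strategy as the paper's: decompose $\alpha'$ via the Gauss lemma to show that almost all of the unit image-speed goes into radial descent, decompose the retort $\beta'$ the same way, and then argue that because the retort lies on a different sheet of $e_1^{-1}$ it must climb strictly slower than $\alpha$ descends, with a uniform margin. Your bookkeeping is in some ways cleaner than the paper's: you integrate the pointwise estimate $b\le(1-\varepsilon)|\rho|$ directly, which sidesteps the paper's somewhat awkward final appeal to the exact identity $T=|\alpha(0)|-|\alpha(T)|$ (a fact established for genuine CDCs in Lemma~\ref{unbeatable lemma}, and only approximately true for ACDCs).

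The genuine gap is exactly the one you flag: the uniform lower bound $\theta(\alpha(t),y)\geq\theta_0>0$. Pointwise positivity is clear, but without a quantitative separation between $\alpha(t)$ and $y=\beta(T-t)$ in $T_{p_1}M_1$ the angle can degenerate to zero. The paper resolves this by choosing, at each $A_2$ point of slack at least $a$ and radius at most $R$, a neighborhood $U$ of adapted $A_2$ coordinates containing a ball of radius $r_0$, where $r_0$ is uniform in terms of $a$, $R$, and an upper bound on the gradient of the slack on $\overline{B_R}$. Since in adapted $A_2$ coordinates $\alpha(t)$ is the unique preimage of $e_1(\alpha(t))$ inside $U$, the retort satisfies $|\beta(T-t)-\alpha(t)|\ge r_0$; one then applies compactness to the closed set of pairs $(x,y)\in\overline{B_R}\times\overline{B_R}$ with $e_1(x)=e_1(y)$ and $|x-y|\ge r_0$ to obtain $\varepsilon_1>0$ bounding $|de_1(r_x)-de_1(r_y)|$ from below, which is your $\theta_0$. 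Without the uniform neighborhood size $r_0$ the compactness argument has nothing to bite on, and that is precisely the ingredient missing from your write-up.
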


\begin{proof}
  Fix a neighborhood $U$ of adapted $A_2$ coordinates that contains the image of $\alpha$.
  We assume that one such $U$ contains all of the image of $\alpha$, otherwise we  split $\alpha$ into parts.
  
  Let $v(t)$ be the vector at $\alpha(t)$ such that $d(t) = -r(\alpha(t)) + v(t)$ belongs to $D_x$.
  Then the slack $A_{\alpha (t)}$ is $\frac{|r(\alpha (t))|}{|-r(\alpha (t))+v(t)|} = \frac{1}{|d(t)|}$.
  
  We assume that $\alpha$ has the canonical parametrization, so that $\alpha'(t)= d(t)+p(t)$, with $p(t)$ is orthogonal to $d$.
  Then $|p(t)|<c |d(t)|=\frac{c}{A_{\alpha(t)}}<\frac{c}{a}$.

   We compute %
  \[ | \alpha (T)|-| \alpha (0)|= \int_{0}^{T} \frac{d}{dt} | \alpha
     |=  \int_{0}^{T} \langle r, \alpha' \rangle  \leq T(-1+\frac{c}{a}) \]
  
  An $A_2$ point only has one preimage in $U$, so any retort $\beta$ of $\alpha$ lies outside of $U$. 
  As $\e ( \alpha (t))= \e ( \beta (T -t))$, we have:
  $$|d \e (r(\alpha(t))) - d\e(r(\beta(T-t)))|>\varepsilon_1$$
  for some $\varepsilon_1$ depending on $U$.
  $U$ contains a ball around $x$ of radius at least $r_0$, a number which depends on $a$, $R$, and an upper bound on the differential of the slack in the ball of radius $R$.
  Thus, we can switch to a smaller $\varepsilon_1>0$ that depends only on $a$
and $R$.
  
  Write $\beta'(t)=b(t)r(\beta (t)) +w(t)$, where $w$ is a vector orthogonal to $r(\beta ( t ))$.
  It follows from the above that $|b(t)|<1-\varepsilon_2 $ for some
$\varepsilon_2 $ depending on $\varepsilon_1$ and a lower bound for the norm of the differential of $x\rightarrow (\e(x), d\e(x))$ in the ball or radius $R$ in $T_p M$.
  
  We compute:
  \[ | \beta (T)|-| \beta (0)|= \int_{0}^{T} \frac{d}{dt} | \beta
     |=  \int_{0}^{T} b(t)  \leq T(1-\varepsilon_2) \]

  and if $c<a\varepsilon_2/2$, we get
  \[ | \beta (T)|-| \beta (0)|\leq T(1-\frac{c}{a}) - T\varepsilon_2/2 \leq | \alpha (0)|-| \alpha (T)|- T\varepsilon_2/2.
  \]
  We are using the canonical parametrization for $\alpha$, so $T$ is the length of the curve $\exp\circ\alpha$.
  By lemma \ref{unbeatable lemma}, $T$ is also $| \alpha (0)|-| \alpha (T)|$.
  The result follows with $\varepsilon = \varepsilon_2/2$.

\end{proof}

With this lemma, we can perturb a CDC slightly to avoid
some points:

\begin{dfn}
  An {\emph{approximately conjugate descending curve}} (ACDC) is a $C^1$ curve $\alpha$ of $A_{2}$ points such that $\alpha' (t$) is within a cone around $D$ of amplitude $c$, where $c$ is the constant in the previous lemma for $R= |\alpha (0)|$.
\end{dfn}

\subsection{Conjugate Locus Linking Curves}

Let us assume that we have an ACDC $\alpha :[0,t_{0} ] \rightarrow \T$ starting at a point $x \in \mathcal{J}$, whose interior consists only of $A_{2}$ points and ending up in an $A_{3}$ point.
We know that we can start a retort $\widetilde{\alpha_{1}}$ at the $A_{3}$ point.

We can continue the retort while it remains in the interior of $V_{1}$, where $ e_{1}$ is a local diffeomorphism and we can lift any curve.
However, we might be unable to continue the retort up to $x$ if the returning curve hits the set of conjugate points.

If we hit an $A_{2}$ point $y= \widetilde{\alpha_{1}} (t_{1} )$, we can take
a ACDC $\beta :[0,t_{2} ] \rightarrow V_{1}$ starting at this point and ending
in an $A_{3}$ point. If $\beta$ has a retort $\tilde{\beta} :[0,t_{2} ]
\rightarrow \T$ that ends up in a non-conjugate point $\tilde{\beta} (t_{2}
)$, we can continue with the retort $\widetilde{\alpha_{2}}$ of $\alpha
|_{[0,t_{0} -t_{1} ]}$ starting at $\tilde{\beta} (t_{2} )$. If
$\widetilde{\alpha_{2}}$ can be continued up to $x= \alpha (0)$, the concatenation of $\alpha$, $\widetilde{\alpha_{1}}$, $\beta$, $\tilde{\beta}$ and $\widetilde{\alpha_{2}}$ is a linking curve (see figure \ref{figure: standard T}).

There are a few things that may go wrong with the above argument: the retort
$\widetilde{\alpha_{1}}$ may meet $\mathcal{J} \setminus \mathcal{A}_{2}$, or
$\beta$ may not admit a full retort starting at $\beta (t_{2} )$, or $\alpha
|_{[0,t_{0} -t_{1} ]}$ may not admit a full retort starting at $\tilde{\beta}
(t_{2} )$. The first problem can be avoided if the ACDCs are built to dodge some small sets, as we will see later.
Then, if we assume that a retort never meets $\mathcal{J} \setminus \mathcal{A}_{2}$, we can iterate the above argument whenever a retort is interrupted upon reaching an $\mathcal{A}_{2}$ point.
We will prove later that the argument only needs to be applied a finite number of times.

This is the motivation for the following definitions:

\begin{dfn}\label{definition: conjugate linking curve}
  A {\strong{finite conjugate linking curve}} (or FCLC, for short) is a continuous \emph{linking curve} $\alpha :[0,t_{0} ] \rightarrow T_{p} M$ that is the concatenation $\alpha = \alpha_{1} \ast \ldots \ast \alpha_{ n}$ of ACDCs and non-trivial retorts of those ACDCs,  all of them {\emph{of finite length}}.%
\end{dfn}

We will build the FCLCs in an iterative way, as hinted at the beginning of this section, by concatenation of ACDCs and retorts of those ACDCs.

\begin{dfn}
  An {\strong{aspirant curve}} is an absolutely continuous curve $\alpha :[0,t_{0} ]\rightarrow T_{p} M$ that is the concatenation $\alpha = \alpha_{1} \ast\ldots \ast \alpha_{ n}$ of ACDCs and non-trivial retorts of those ACDCs,
  such that:
  \begin{itemizedot}
    \item Starting with the tuple $( \alpha_{1} , \ldots , \alpha_{n} )$ consisting of the curves that $\alpha$ is made of in the same order, we can reach a tuple \strong{with no retorts}, by iteration of the
    following rule:
    
    {\emph{Cancel an ACDC together with a retort of that ACDC that follows
    right after it: $( \alpha_{1} , \ldots , \alpha_{j-1} , \alpha_{j} ,
    \alpha_{j+1} , \alpha_{j+2} , \ldots \alpha_{n} ) \rightarrow ( \alpha_{1}
    , \ldots , \alpha_{j-1} , \alpha_{j+2} , \ldots \alpha_{n} )$, if
    $\alpha_{j+1}$ is a retort of $\alpha_{j}$. }}
    
    \item The extremal points of the $\alpha_{i}$ are called the
    {\strong{vertices}} of $\alpha$. The vertices of $\alpha$ fall into one
    of the following categories:
    \begin{itemizeminus}
      \item starting point (first point of $\alpha_{1}$): a point in
      $\mathcal{J}$.
      
      \item end point (last point of $\alpha_{n}$): a point in $\mathcal{I}$.
      
      \item $A_{3}$ join, as explained in section \ref{section: A3 joins}.
      
      \item a {\strong{splitter}}: a vertex that joins two ACDCs whose
      concatenation is also a ACDC.
      
      \item a {\strong{hit}}: a vertex that joins a retort that reaches
      $\mathcal{A}_{3}(I) $ transversally, and an ACDC starting at the intersection point.
      
      \item a {\strong{reprise}}: a vertex that joins a retort that
      completes its task of replying to a ACDC $\alpha_{j}$, and the retort
      for a different ACDC $\alpha_{i}$ (it follows from the first condition
      that $i<j$).
  \end{itemizeminus}
\end{itemizedot}

  The \strong{tip} of $alpha$ is its endpoint $\alpha(t_0)$.

  The {\strong{loose}} ACDCs in $\alpha = \alpha_{1} \ast \ldots \ast
  \alpha_{k}$ are the ACDC curves $\alpha_{j}$ for which there is no retort in
  $\alpha$.
  
  An aspirant curve is \strong{saturated} if there are no loose ACDCs.
\end{dfn}

The three new types of vertices: \emph{splitters}, \emph{hits} and \emph{reprises}, always come in packs.
We have already shown one example of how they could appear, but we formalize that construction in the following definition.

\begin{dfn}
  A {\strong{standard T}} consists of three vertices: a splitter, a hit and
  a reprise, such that the six curves $\alpha_{i}$ contiguous to the
  three points map to a $T$-shaped curve, with two curves mapping into each
  segment of the T (see figure \ref{figure: standard T}).
\end{dfn}

\begin{figure}[H]
  \includegraphics[width=0.8\textwidth]{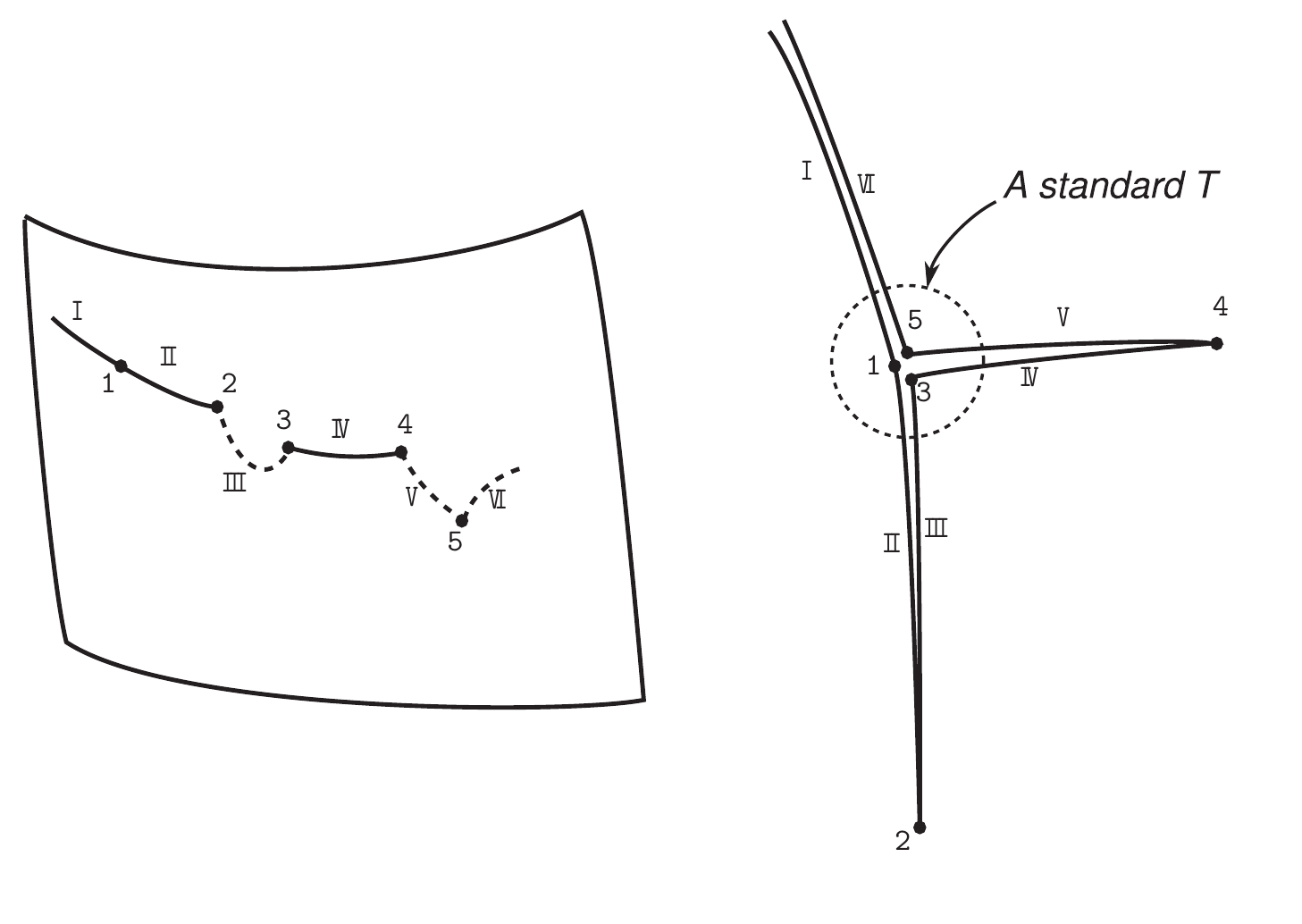}
  \caption{A {\emph{Standard T}}: The left hand side
  displays a curve $\alpha$ in $\T$, while the right hand side displays $\e
  \circ \alpha$.
  I, II and IV are ACDCs, III is the retort of II, V is the retort of IV, and
  VI is the retort of I. Vertices 2 and 4 are $A_{3}$ joins, vertex 1 is a
  splitter, vertex 3 is a hit and vertex 5 is a reprise. There can be more
  than two segments between a splitter and its matching hit, and between a hit
  and its matching reprise.}
  \label{figure: standard T}
\end{figure}

\begin{prop}\label{main properties of linking curves}
  Let $\alpha=\alpha_1\ast\dots\ast\alpha_k$ be a saturated aspirant curve between $x,y \in \Tone$. Then:
  \begin{itemizedot}
    \item $|x|>|y|$
    
    \item $\alpha$ is an FCLC.
  \end{itemizedot}
\end{prop}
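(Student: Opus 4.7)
My plan is to prove both statements simultaneously by analysing the matching $\sigma$ between ACDCs and their retorts that the reduction procedure produces. Since each reduction step cancels an adjacent pair $(\alpha_j,\alpha_{j+1})$ in stack-like fashion, and since $\alpha$ is saturated, every ACDC is eventually paired with a unique retort. Moreover, the matching is \emph{nested}: any two pairs are either disjoint or one is entirely contained between the endpoints of the other, just as with balanced parentheses. A short induction on the number of reduction steps establishes this, and the list of vertex types (splitter, $A_3$ join, hit, reprise) simply describes how the nesting pattern changes along $\alpha$.

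For the radius inequality, the definitions yield the following per-segment bounds: along each ACDC $\alpha_j$ the radius strictly decreases by some amount $\Delta_j>0$, while along its matched retort $\alpha_{\sigma(j)}$ Lemma~\ref{slack lemma} gives
\[
|\alpha_{\sigma(j)}(\mathrm{end})|-|\alpha_{\sigma(j)}(\mathrm{start})|<(1-\varepsilon_j)\,\Delta_j.
\]
Because $\alpha$ is absolutely continuous, the total change $|y|-|x|$ is the telescoped sum of per-segment changes, and grouping each ACDC with its matched retort yields
\[
|y|-|x|<-\sum_j\varepsilon_j\,\Delta_j<0,
\]
which is the first bullet.

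For the FCLC claim I would construct the identification map directly from $\sigma$: if $\alpha_j$ lives on $[a_j,b_j]$ and $\alpha_{\sigma(j)}$ on $[a_{\sigma(j)},b_{\sigma(j)}]$, both of length $L_j$, set $a_j+t\sim b_{\sigma(j)}-t$ for $t\in[0,L_j]$ and extend by transitivity. Nestedness guarantees that the quotient $\Gamma$ of $[0,t_0]$ by $\sim$ is a finite topological tree, and the defining property of a retort, $e_1(\alpha_{\sigma(j)}(b_{\sigma(j)}-t))=e_1(\alpha_j(a_j+t))$, ensures that $e_1\circ\alpha$ factors continuously through $\Gamma$. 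Tree-formedness then follows by observing that for any continuous $1$-form $\varphi$ along $e_1\circ\alpha$ factoring through $\Gamma$, the integrals of $\varphi(u')\,ds$ over each matched pair $[a_j,b_j]$ and $[a_{\sigma(j)},b_{\sigma(j)}]$ cancel via the reversing change of variable $s\mapsto a_j+b_{\sigma(j)}-s$; nestedness then guarantees that every identified pair of parameters with $T(t_1)=T(t_2)$ is the boundary of a union of such matched intervals, so $\int_{t_1}^{t_2}\varphi(u')\,ds=0$. Saturation forces $0\sim t_0$ (the tuple empties out), giving the fully tree-formed condition and hence that $\alpha$ is an FCLC.

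The main obstacle I expect is verifying formally that the matching is nested and that the induced quotient is a tree, since one must take into account the interplay between reprises (which glue two matched pairs in sequence) and hits (which nest a new pair inside an ongoing retort). I would handle this inductively, using the reduction rule itself: removing an innermost cancelable pair corresponds to collapsing a leaf-edge of $\Gamma$, and the inductive hypothesis applied to the shorter tuple — viewed as an aspirant curve with naturally inherited structure — combines with the already-treated base case of a single ACDC plus its retort to yield the full tree-formed statement for $\alpha$.
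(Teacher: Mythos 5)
Your proposal is correct and follows essentially the same route as the paper: the radius inequality comes from telescoping the per-pair decrease supplied by Lemma~\ref{slack lemma}, and the FCLC claim is established by building the identification $T$ from the ACDC/retort matching and cancelling integrals over matched intervals, with nestedness (from the stack-like reduction rule) underpinning the quotient being a tree. The one small divergence is that you treat the identification of splitter/hit/reprise triples purely by transitivity of the matching and nestedness, whereas the paper identifies them explicitly and then handles the boundary case $t_1,t_2$ lying at standard-T vertices by a separate continuity argument; both work, and yours is marginally cleaner.
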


\begin{proof}
  The first part follows trivially from lemma \ref{unbeatable lemma} and its generalization, lemma \ref{slack lemma}. Each pair of a ACDC and its retort adds a negative amount to $|\alpha(0)|=|x|$.
  
  For the second part, let $x\sim y$ and $x\sim y$ whenever $x=\alpha_i(s)$ lies on an ACDC $\alpha_i$ defined on $[0,l_i]$ and $y=\alpha_j(l_i-s)$ on its retort $\alpha_j$ defined on the same interval, so that $\alpha_i(t)=\alpha_j(l_i-t)$.
  We also identify the triples of vertices that belong to each standard T.
  Let $T:[0,l]\rightarrow \Gamma$ be the identification map associated to the relation $\sim$.

  We must show that $u= \exp \circ \alpha$ is tree-formed with respect to $T$: 
  let $t_{1}$, $t_{2}$ such that $T(t_{1} )=T(t_{2} )$, and $\varphi$ a continuous $1$-form along $u$ $(\varphi (s) \in T^{\ast}_{u(s)} M)$ that factors through $\Gamma$. 
  Then we claim that:
  \begin{equation}
    \int_{t_{1}}^{t_{2}} \varphi (s)(u' (s))ds \label{integral that must be 0
    for the curve to be tree formed}
  \end{equation}
  splits as a sum of integrals over the image by $\exp$ of an ACDC and the image of its matching retort.
  The curves in each such pair have the same image, and the integrals cancel out, as the integral of a $1$-form is
  independent of the parametrization, and only differs by sign.
  
  Suppose first that $t_{1}$ is in the domain of an ACDC $\alpha_{i}$ and $t_{2}$ lies in the retort $\alpha_{j}$ of $\alpha_{i}$.
  We recall it is possible to reach an empty tuple by canceling adjacent pairs of an ACDC and its retort.
  Thus, in order to cancel $\alpha_{i}$ and $\alpha_{j}$, it must be possible to cancel all the curves $\alpha_{k}$ with $i<k<j$.
  These curves can be matched in pairs $\{ ( \alpha_{n} , \alpha_{m} ) \}_{( n,m ) \in \mathcal{P}}$ of ACDC and retort, with $i<n<m<j$ for each pair $( n,m ) \in \mathcal{P}$.
  Then we have:
\begin{align*}
    \int_{t_{1}}^{t_{2}} \varphi (s)(u' (s))ds  =\, & 
    \int_{t_{1}}^{t_{2}^{i}}
      \varphi (s)(( \exp \circ \alpha_{i} )' (s))ds+
    \\
    & \sum_{( n,m ) \in \mathcal{P}} \Big( \int_{t^{n}_{1}}^{t^{n}_{2}}
    \varphi (s)(( \exp \circ \alpha_{n} )' (s))ds\:+\\
    & \hskip46pt\int_{t^{m}_{1}}^{t^{m}_{2}} \varphi (s)(( \exp \circ \alpha_{m}
)' (s))ds \:
    \Big) +\\
    & \int_{t_{1}^{j}}^{t_{2}} \varphi (s)(( \exp \circ \alpha_{j} )' (s))ds
\end{align*}

  The remaining two integrals also cancel out, proving the claim.
  
  If $t_{1}$ and $t_{2}$ are two of the three points of a standard T, we can
  take points $t^{\ast}_{1}$ and $t^{\ast}_{2}$ as close to $t_{1}$ and
  $t_{2}$ as we want, but in an ACDC and its retort, respectively, and such that $T(t_1^\ast)=T(t_2^\ast)$.
  The result follows because the integral \ref{integral that must be 0 for the curve to
  be tree formed} depends continuously on $t_{1}$ and $t_{2}$.

\end{proof}

\subsection{Existence of FCLCs}

The goal of this section is to prove the existence of an FCLC starting at an arbitrary point $x \in \mathcal{J}$.
The set $\{y:|y|<|x|, \e (y)= \e(x))\}=\{y_{j} \}$ is finite. This follows because $\{y:|y| \leqslant |x|\}$ can be covered with a finite amount of neighborhoods of adapted coordinates, and in any of them the preimage of any point is a finite set.
At least one $y_{j}$ realizes the minimum distance from $p$ to $q= \e (x)$, and must be either $A_3(I)$ or NC (in other words, $y \in \mathcal{I}$).
We will show that there is an FCLC joining $x$ and one $y_{j} \in \mathcal{I}$, though it may not be the one with minimal radius.

\begin{dfn}
  We define some important sets:
  \[ S_{R} =B_{R} \cap \e^{-1} ( \e ( \mathcal{A}_{2} \cap B_{R} )) \]
  \[ V_{1}^{0} =\{x \in V_{1} : \e^{-1} ( \e (x)) \cap B_{|x|} \subset \NC
     \cup \mathcal{A}_{2} \} \]
  \[ \SAtwo =\{x \in \mathcal{A}_{2} :    \exists y \in
     \mathcal{A}_{2} ,   \e (y)= \e (x), |y|<|x|\} \]
  
  In other words, $V_{1}^{0}$ consists of those points $x \in V_{1}$ such that
  all preimages of $\e (x$) with radius smaller than $|x|$ are $\NC$ {\emph{or}}
  $\mathcal{A}_{2}$.
\end{dfn}

\begin{dfn}
  A GACDC is an ACDC $\alpha$ such that
\begin{itemizedot}
  \item $\tmop{Im} ( \alpha )$ is contained in $\mathcal{C} \cap V_{1}^{0}$.
  \item for any $y \in B_{| \alpha (t_{0} )|} \cap \mathcal{A}_{2}$ such that
     $\e ( \alpha (t_{0} ))= \e (y)$, $\e \circ \alpha$ is transverse to $\e
     (\mathcal{A}_{2} \cap B_{\varepsilon} (y))$ at $t_{0}$, for some
     $\varepsilon >0$.
\end{itemizedot}
\end{dfn}

In words, all possible retorts of a GACDC avoid all singularities that are not $A_{2}$ and only meet $\mathcal{A}_{2}$ transversally.

\begin{dfn}\label{def: linking curve algorithm}
The \emph{linking curve algorithm} is a procedure that attempts to build an FCLC starting at a given point $x\in V_1$ (see figure \ref{figure: algorithm for linking curves}).

It starts with the trivial \emph{aspirant curve} $\alpha =\{x\}$ and updates it at each segment by addition of one or more segments, to get a new aspirant curve.
It only stops if the aspirant curve is saturated, and its tip is in $\mathcal{I}$.

The aspirant curve $\alpha = \alpha_{1} \ast \ldots \ast \alpha_{k}$ is updated following the only rule in the following list that can be applied:
\begin{description}
  \item[Descent] If the tip of $\alpha_{k}$ is a point in $\mathcal{J}$, let $\gamma$ be a GACDC contained in $V_{1}^{0}$ that starts at $x$ and ends up in an $A_3$ point.
  We know that $\gamma$ intersects $\SAtwo$ in a finite set and, for convenience, we split $\gamma$ into $r$ GACDCs $\alpha_{k+1},\dots,\alpha_{k+r}$ such that each of these curves intersects $\SAtwo$ only at its extrema.
  The new curve $\alpha \ast \alpha_{k+1} \ast \dots \ast \alpha_{k+r}$ ends up in an $A_{3}$ point.
  The next step is an $A_3$ join.

  \item[$A_3$ join] %
  If $\alpha_{k} :[0,T] \rightarrow V_{1}$ is a ACDC ending up in an $A_{3}$ point, add the retort $\alpha_{k+1}$ of $\alpha_{k}$ that starts at the $A_{3}$ join $\alpha_k(T)$.
  This is always possible, since $\alpha_{k}$ does not intersect $\SAtwo$.
  The new tip of $\alpha \ast \alpha_{k+1}$ will be $NC$, $A_2$ or $A_3$, but the latter can only happen if $\alpha \ast \alpha_{k+1}$ is a linking curve.

  \item[Reprise] If the tip of $\alpha$ is $NC$ and $\alpha$ is not a linking curve, let $\alpha_j$ be the latest loose curve in $\alpha$.
  We add the retort $\alpha_{k+1}$ of $\alpha_{j}$ starting at the tip of $\alpha$.
  This is possible for the same reason as before and, again, the new tip of $\alpha \ast \alpha_{k+1}$ will be $NC$, $A_2$ or $A_3$, and the latter can not happen unless $\alpha \ast \alpha_{k+1}$ is a linking curve.

  \item[Success!] If $\alpha$ is saturated and its tip is in $\mathcal{I}$, then $\alpha$ is an FCLC, so we report success and stop the algorithm.
  For completeness, the algorithm also reports success if $\alpha =\{x\}$, for
  $x \in \mathcal{I}$.
\end{description} 
\end{dfn}

\begin{figure}[ht]
 \includegraphics[width=0.9\textwidth]{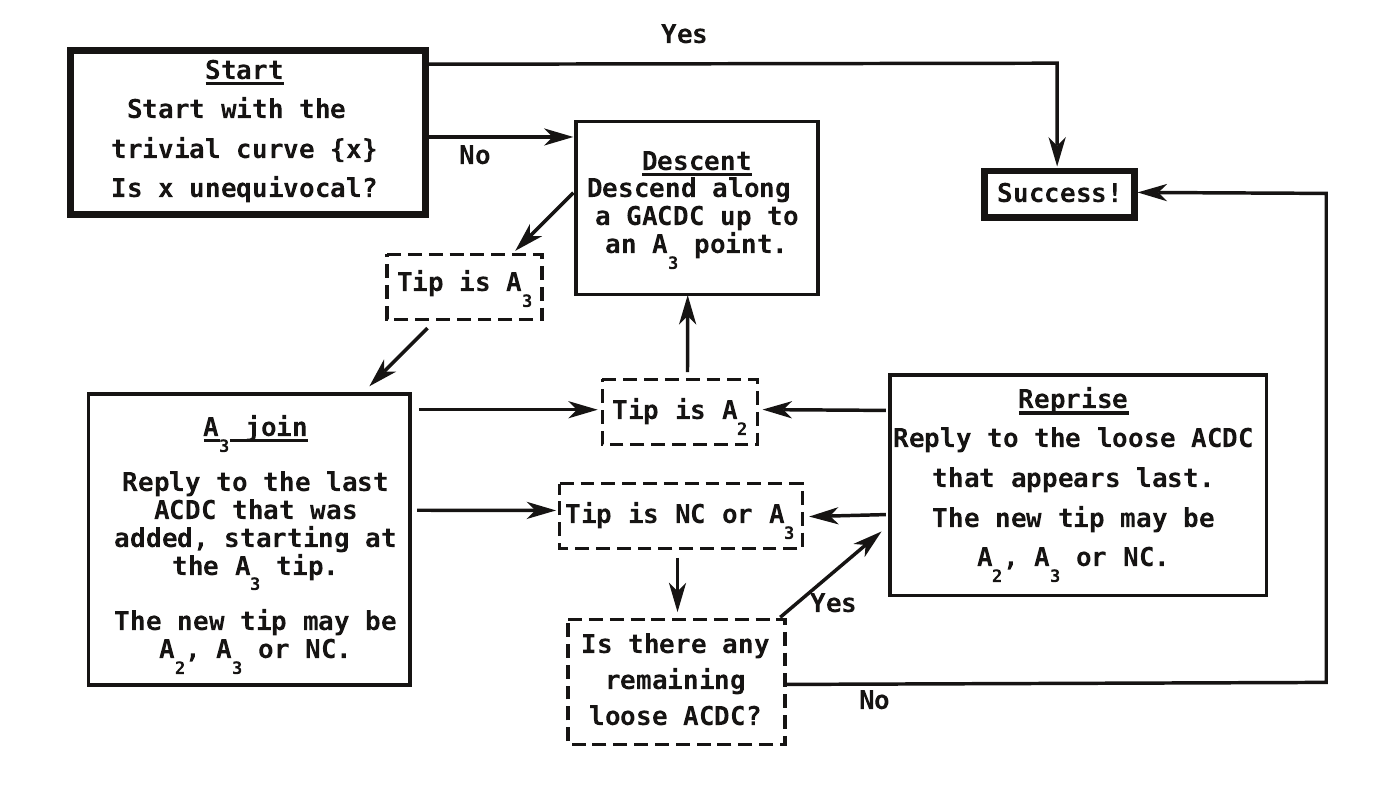}
 \caption{Flow diagram for the linking curve algorithm}
 \label{figure: algorithm for linking curves}
\end{figure}

\begin{remark}
The algorithm can also be presented in a \emph{recursive} fashion. We start with some definitions:

\begin{itemize}
 \item $Tip(\alpha)=\alpha(T)$, for any curve $\alpha$ defined in an interval $[0,T]$.
 \item $Ret(\alpha, y)$ is the retort of $\alpha$ starting at $y$, for any curve $\alpha$ contained in $V^1_0\setminus\SAtwo$, and a point $y\in V_1$ such that $\e(y)=\e(Tip(\alpha))$.
\end{itemize}

Then for any $x\in V_1$, we define an aspirant curve $L(x)$ by the following rules:
\begin{itemize}
 \item If $x\in\mathcal{J}$, then $L(x)=\{x\}$
 \item If $x\in \mathcal{I}$, then compute the GACDC curve $\gamma=\gamma_{1} \ast \dots \ast \gamma_{r}$, as above. Then $L(x)=\gamma_1\ast L(Tip(\gamma_1)) \ast Ret(\gamma_1,Tip(L(Tip(\gamma_1))))$
\end{itemize}

 The reader may have noticed that $\gamma_{2}$ to $\gamma_{r}$ are discarded, and only $\gamma_1$ is kept (the ACDC up to the first point in $\SAtwo$).
 This actually causes a small technical problem, so we will use only the iterative version of the algorithm.
\end{remark}

\begin{theorem}\label{thm: the linking curve algorithm stops in finite time}
  Let $M$ be a manifold with a Riemannian metric in $\mathcal{H}_{M}$.
\begin{enumerate}
 \item   For any $R>0$ there is $L>0$ such that any GACDC starting at $x \in \mathcal{J} \cap B_{R}$ has length at most $L$, and can be extended until it reaches an $A_{3}$ point.
 \item The algorithm \ref{def: linking curve algorithm} always reports ``success!'' after a finite number of steps, for any starting point $x \in \mathcal{J}$.
\end{enumerate}
\end{theorem}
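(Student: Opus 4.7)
For part~(1), my plan is to exploit the canonical parametrization directly. For an exact CDC, the radius decreases at unit rate with respect to the canonical parameter, so the canonical length equals the total radial drop and is at most $|\alpha(0)| \le R$; for an ACDC satisfying the hypotheses of Lemma~\ref{slack lemma}, the radial derivative is bounded above by $-1 + c/a$, giving the uniform bound $L = R/(1 - c/a)$. To show that a GACDC exists ending at an $A_3$ point, I would start with an exact CDC flowing out of $x$ inside $\mathcal{A}_2$; the finite-length bound forces the curve to exit $\mathcal{A}_2$ at finite time. For a metric in $\mathcal{H}_M$ the exit stratum is contained in $\mathcal{A}_3 \cup \mathcal{A}_4 \cup \mathcal{D}_4^{\pm}$, and since $\mathcal{A}_4$ and $\mathcal{D}_4^{\pm}$ have codimension at least three in $T_p M$, a small $C^1$ perturbation (still within the tolerance of Lemma~\ref{slack lemma}, so that the perturbed curve remains an ACDC) sends the curve transversally across them; Lemma~\ref{lemma: CDCs near A3 points} identifies the endpoint as an $A_3$ point. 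Further small perturbations achieve the containment in $V_1^0$ and the transversality of $\e\circ\alpha$ to $\e(\mathcal{A}_2\cap B_\varepsilon(y))$ at the endpoint, yielding a genuine GACDC.

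For part~(2), I would introduce the Lyapunov function
\[
\Phi(\alpha) \;=\; |\alpha(t_0)| \;+\; \sum_{\alpha_j\text{ loose in }\alpha} T_{\alpha_j},
\]
summed over the loose ACDCs of the current aspirant curve, with $T_{\alpha_j}$ denoting the canonical-parameter length of $\alpha_j$. Initially $\Phi = |x| \le R$, and $\Phi \ge 0$ always. A \emph{Descent} step preserves $\Phi$, because the decrease in tip radius along the newly added GACDC equals the total canonical length of the new loose ACDCs. Each \emph{$A_3$-join} or \emph{Reprise} step strictly decreases $\Phi$ by at least $\varepsilon_2 T_{\alpha_j}/2$, as the matched ACDC--retort pair is $\varepsilon$-unbeatable by Lemma~\ref{slack lemma}. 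Summing over all matching steps yields the uniform bound $\sum T_{\alpha_j} \le 2R/\varepsilon_2$ on the total canonical length of paired ACDCs, and combined with the loose case this controls the total length of every ACDC appearing in $\alpha$.

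The main obstacle is turning this length control into a bound on the \emph{number} of iterations, since descent steps can in principle produce arbitrarily short GACDCs. I would address this combinatorially. Each descent adds $r$ ACDCs, where $r-1$ is the number of crossings of the underlying GACDC with $\SAtwo$; for a GACDC of bounded length this number is finite and locally uniformly bounded, because $\SAtwo \cap \overline{B_R}$ is a finite union of smooth strata of lower dimension that a GACDC meets transversally. The matching structure between ACDCs and their retorts is a Dyck-like nesting, so the number of reprises between consecutive descents is at most the current stack size of loose ACDCs. To bound the total number of descents, I would combine the bound from part~(1) with the monotonicity of $\Phi$: after each descent, the tip radius drops by a strictly positive amount that can be bounded from below by the minimum GACDC length attainable from points of $\overline{B_R} \cap \mathcal{J}$, using compactness and the local normal forms of $\e$ at each singularity type. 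Once descents cease, finitely many reprise/$A_3$-join pairs empty the stack, and the algorithm halts in the ``Success!'' state; the tip then lies in $\mathcal{I}$, since neither Descent (which requires a tip in $\mathcal{J}$) nor Reprise (which requires a loose ACDC) is applicable.
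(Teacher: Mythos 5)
Your Part (2) has a genuine gap, and the gap is exactly where the paper's proof brings in the transient-pair machinery that you do not use.

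The critical step in your proposal is the claim that ``after each descent, the tip radius drops by a strictly positive amount that can be bounded from below by the minimum GACDC length attainable from points of $\overline{B_R} \cap \mathcal{J}$, using compactness and the local normal forms.'' This compactness argument fails: $\mathcal{J}$ is not closed (its closure meets $\mathcal{A}_3(I)$), so $\overline{B_R}\cap\mathcal{J}$ is not compact, and in fact a GACDC starting at an $\mathcal{A}_2$ point that lies arbitrarily close to an $A_3(I)$ point has arbitrarily small length, by Lemma~\ref{lemma: CDCs near A3 points}. The infimum you want to bound below is zero. Your Lyapunov function $\Phi$ does decrease, but the per-step decrease $\varepsilon T_{\alpha_j}$ can be arbitrarily small for the same reason, so $\Phi$ decreasing gives no bound on the number of iterations. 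This is precisely the obstruction the paper's Lemma~\ref{typical sings have transient neighborhoods} and the notion of a \emph{positive bounded transient pair} are built to overcome: the gain of a transient pair $(S,O)$ is accumulated over an entire run of the algorithm from a point of $S$ until the aspirant curve exits $O$ (a short descent near an $A_3(I)$ point is followed by an $A_3$-join and a reprise that together carry the tip out of $O$ with a definite radius drop), not per individual descent. The paper then finishes with a supremum-contradiction argument on the radius $R_0$, covering $\overline{B_{R_0}}$ by finitely many transient pairs, taking $\varepsilon_0$ smaller than the minimum gain, and using the inductive hypothesis on $B_{R_0}$ each time a reply to a loose ACDC produces a new $A_2$ tip. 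Your proof never invokes Lemma~\ref{typical sings have transient neighborhoods} and would need to reconstruct its content from scratch to close the argument. Your treatment of Part (1) is a more explicit route than the paper's (which just reads off boundedness from the bounded transient pairs), but it also needs care: the slack $a$ tends to zero as a GACDC approaches an $A_3$ point, so the constants in Lemma~\ref{slack lemma} degenerate near the terminal point; and the avoidance of the $D_4^+$-type-II sink is a point the paper handles separately and you gloss over.
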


\begin{dfn}
A pair ($S,O$) of open subsets of $\T$ with $\bar{S} \subset O$, is {\strong{transient}} if and only if for any point $x$ in $S \cap \mathcal{J}$, any aspirant curve that starts at $x$ can be extended to either an aspirant curve with endpoint outside of $O$ or an FCLC contained in $O$.

  The {\strong{gain}} of a transient pair $(S,O)$ is the infimum of all
  $|x|-|y|$, for all $x \in S$, $y \in V_{1} \setminus O$ such that there is
  an aspirant curve starting at $x$ and ending at $y$.

  A transient pair is {\strong{positive}} if it has positive gain.

  A transient pair is \strong{bounded} if there is a uniform bound for the length of any aspirant curve contained in $O$.
\end{dfn}

\begin{lem}
  \label{typical sings have transient neighborhoods}
  For any point $x$ of type NC, $A_{2}$, $A_{3}$, $A_{4}$, $D_{4}^{+}$ or $D_{4}^{-}$,
  there is a positive bounded transient pair $(S,O)$, with $x \in S$.
\end{lem}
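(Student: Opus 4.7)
The proof is by case analysis on the singularity stratum containing $x$, using the adapted coordinates from Definition~\ref{dfn: canonical form and adapted coordinates} and the classification of generic singularities in section~\ref{section: generic metric}. In every case we take $O$ to be a relatively compact neighborhood of $x$ in adapted coordinates, small enough that no singular stratum other than those adjacent to the stratum of $x$ meets $\bar O$, and $S\subset O$ a smaller concentric neighborhood with $\bar S\subset O$. The \emph{bounded} condition then comes from compactness of $\bar O$ combined with Lemma~\ref{unbeatable lemma}: each ACDC has $\e$-length bounded by $\mathrm{diam}(e_1(O))$, each retort has the same $\e$-length as its matching ACDC, and only finitely many distinct ACDCs based at different strata can appear before the algorithm either exits $O$ or terminates. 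The trivial cases are NC and $A_3(I)$: by shrinking $O$ we may assume it consists entirely of NC points (when $x$ is NC) or entirely of unequivocal points (when $x$ is $A_3(I)$, by Lemma~\ref{A3I are unequivocal}); in either case $S\cap\mathcal{J}=\emptyset$, so transience is vacuous and the gain is infinite by convention.

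For $x\in\mathcal{A}_2$, the set $\mathcal{C}\cap O$ in adapted coordinates is the hyperplane $\{x_1=0\}$ and consists entirely of $A_2$ points. Shrinking $O$ we may assume no $A_3$ point lies in $\bar O$ and the slack is bounded below by some $a>0$ on $\bar S$. Any aspirant curve starting at a point of $S\cap\mathcal{A}_2$ must, by the rules of the algorithm~\ref{def: linking curve algorithm}, begin with a GACDC; since no $A_3$ point is available in $O$, this GACDC must exit $O$ across $\partial O$ before being completed by an $A_3$ join. The gain is bounded below by $\varepsilon$ (from Lemma~\ref{slack lemma}) times the distance from $S$ to $\partial O$.

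For $x$ of type $A_3(II)$, $A_4$ or $D_4^{\pm}$, we use the explicit canonical formulas together with Lemma~\ref{lemma: CDCs near A3 points} and its analogues to describe the local foliation of $\mathcal{C}\cap O$ by integral curves of $D$. Away from a small neighborhood $N$ of the top stratum, every point of $\mathcal{C}\cap O\setminus N$ is $A_2$ with slack bounded below uniformly. The GACDCs starting in $S\setminus N$ either flow out through $\partial O$ --- in which case Lemma~\ref{slack lemma} yields the uniform gain --- or flow into an $A_3(I)$ point inside $O$, where the algorithm completes finitely many standard T's to produce an FCLC contained in $O$ with positive gain by Proposition~\ref{main properties of linking curves}. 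To treat aspirant curves starting in $N$ itself, one checks directly from the canonical formulas that the first GACDC leaves $N$ within uniformly bounded arc-length, after which the previous analysis applies.

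The main obstacle is ensuring that GACDCs starting near the higher-codimension top stratum inherit a uniform lower bound on slack: the slack degenerates at the vertex, forcing a delicate choice of $S$ and $N$ guaranteeing that any aspirant curve starting in $S\cap\mathcal{J}$ leaves a small neighborhood of the vertex in a uniformly bounded number of steps and enters a region where Lemma~\ref{slack lemma} provides uniform unbeatability. Carrying this out in the explicit canonical coordinates for each of $A_3(II)$, $A_4$, $D_4^+$ and $D_4^-$ --- including checking the transversality of the various strata of $\mathcal{C}$ to the flow of $D$ --- is the principal technical content of the proof.
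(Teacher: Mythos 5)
Your decomposition into cases and your use of Lemma~\ref{slack lemma} for the uniform gain match the paper's proof in structure, and your treatment of the NC and $A_2$ cases is essentially correct. However, there are two genuine problems.

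First, your treatment of $A_3(I)$ is wrong. You claim that by shrinking $O$ you may assume it consists entirely of unequivocal points, hence $S\cap\mathcal{J}=\emptyset$. But near an $A_3$ point of either type, the conjugate set $\mathcal{C}$ is a smooth hypersurface containing a curve of $A_3$ points, and every other point of $\mathcal{C}\cap O$ is $A_2$; since $\mathcal{A}_2\subset\mathcal{J}$ by definition, $S\cap\mathcal{J}$ is never empty. (Even if one argued that nearby $A_2$ points are unequivocal as individual points, the algorithm's \textbf{Success!} step fires only when the tip lies in $\mathcal{I}=\mathcal{NC}\cup\mathcal{A}_3(I)$, which excludes $A_2$.) The correct argument, which the paper uses and Lemma~\ref{lemma: CDCs near A3 points} makes available, is that the ACDC from any nearby $A_2$ point either flows into the $A_3(I)$ point --- in which case an $A_3$ join finishes the FCLC inside $O$, with positive gain from Lemma~\ref{unbeatable lemma} --- or exits through $\partial O$, reducing to the $A_2$ argument. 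There is no ``vacuous'' case here.

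Second, for $A_3(II)$, $A_4$ and $D_4^\pm$ you defer ``the principal technical content'' rather than supply it, and this is precisely where the claim is not automatic. The danger is exactly the one you name --- slack degenerating at the top stratum --- but the paper's resolution is not a generic remark: it requires the qualitative pictures worked out in the preceding subsection (one outgoing CDC per sector near $D_4^-$; the $U_1/U_2$ decomposition and the queue-d'aronde interruption of the retort near $A_4$; the distinction between $D_4^+$ of types I and II, with GACDCs chosen to avoid the one CDC flowing into a type-II center). Without establishing in adapted coordinates that every GACDC from $S\cap\mathcal{J}$ leaves $O$ or closes up inside $O$ after a bounded number of standard T's, the boundedness and positive gain of the pair are unsubstantiated. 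Your ``uniformly bounded number of steps'' claim in the final paragraph is the conclusion, not an ingredient that can be assumed.
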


Lemma \ref{typical sings have transient neighborhoods} is all we need to complete the proof of Main Theorem \ref{main theorem generic}:

\begin{proof}[Proof of theorem \ref{thm: the linking curve algorithm stops in finite time}]
We prove the second part first.

Define:
\[ R_{0} = \sup \left\{ R: \text{ for all }x \in B_{R} ,  \begin{array}{l}
     \text{the algorithm starting at $x$ reports }\\
     \text{success! after a finite amount of iterations}
   \end{array} \right\} \]

We will assume that $R_{0}$ is finite and derive a contradiction, thus showing that the algorithm always reports success after a finite amount of iterations.
Using lemma \ref{typical sings have transient neighborhoods}, we cover $\overline{B_{R_{0}}}$ by a finite number of neighborhoods $\{S_{i} \}_{i=1}^{N}$, where $(S_{i} ,O_{i} )$ are bounded positive transient pairs.
Then $B_{R_{0} + \varepsilon}$ is also covered by $\cup S_{i}$ for some $\varepsilon >0$.
Let $\varepsilon_{0}$ be the minimum of $\varepsilon$, and all the gains of the $N$ pairs $(S_{i} ,O_{i} )$.

Take a point $x \in B_{R_{0} + \varepsilon_{0}}$ and assume $x \in S_{1}$.
By hypothesis we can find an aspirant curve $\alpha$ with endpoint $y$ outside of $O_{1}$.

Thanks to the way we have chosen $\varepsilon_0$, we can assume $|y|<R_{0}$, and by hypothesis there is a saturated aspirant curve $\beta$ that joins $y$ to some point $z$.
Then $\alpha\ast\beta$ is an aspirant curve starting at $x$ and ending at $z$.
If we want to complete this aspirant curve to get a saturated one, it remains to reply to all the loose ACDCs in $\alpha$.
Each of them, except possibly its endpoints, is contained in $V^1_0\setminus\SAtwo$.
If, after replying to one of them, we hit an $A_2$ point $y_{0}$, then $|y_{0}|<R_{0}$, and thus we can append a saturated aspirant curve that joins $y_{0}$ to some $z_{0} \in \mathcal{N C} \cap B_{|y_{0} |}$.
Then we can continue to reply to the remaining loose ACDCs, and the process finishes in a finite number of steps.
This is the desired contradiction that completes the proof of the second part.

The first part follows trivially because the covering is by bounded pairs.%

\end{proof}

\begin{proof}[Proof of theorem \ref{claim: hypothesis of the synthesis theorem}]

The first part of theorem \ref{thm: the linking curve algorithm stops in finite time} guarantees that we can always perform the ``{\emph{descent}}'' step in the diagram.
We have already shown why the other steps can always be performed.

The second part of that theorem shows that the algorithm always stops after a finite number of iterations.

Thus, we can always produce an FCLC starting at any point in $\mathcal{J}$.
Theorem \ref{main properties of linking curves} shows that an FCLC is a linking curve.

This, together with lemma \ref{A3I are unequivocal} completes the proof of theorem \ref{claim: hypothesis of the synthesis theorem}.
\end{proof}

It only remains to prove lemma \ref{typical sings have transient neighborhoods}.
Before we can prove it, we need to look at $A_4$ and $D_4$ points more closely.

\subsection{CDCs in adapted coordinates\label{section: CDCs in adapted coords} for $A_4$ and $D_4$ points.}

As we mentioned in section \ref{section: generic metric}, the radial vector field, and the spheres of constant radius of $\T$, which have very simple expressions in standard linear coordinates in $\T$, are distorted in adapted coordinates.
Thus, the distribution $D$ and the CDCs do not always have the same expression in adapted coordinates.
In this section, we study them qualitatively.
We will use the name $R: \T \rightarrow \mathbb{R}$ for the radius function, and $r$ for the radial vector field, and we assume that our conjugate point is a first conjugate point (it lies in $\partial V_{1}$).

\subsubsection{$A_{4}$ points}

In a neighborhood $O$ of an $A_{4}$ point, $\Tone$ can be stratified as an isolated $A_{4}$ point, inside a stratum of dimension $1$ of $A_{3}$ points, inside a smooth surface consisting otherwise on $A_{2}$ points.
The conjugate points are given by $4x_{1}^{3} +2x_{1} x_{2} +x_{3} =0$, and the $A_{3}$ points are given by the additional equation $12x_{1}^{2} +2x_{2} =0$.
The kernel is generated by the vector $\frac{\partial}{\partial x_{1}}$ at any conjugate point and we can assume that $D$ is close to $\frac{\partial}{\partial x_{1}}$ in $O \cap\mathcal{C}$.

The radial vector field does not have a fixed expression in adapted coordinates, but the distribution $D$ is a smooth line distribution and its integral curves are smooth.
Thus, the $A_{4}$ point belongs to exactly one integral curve of $D$.

\begin{figure}[ht]
 \centering
 \includegraphics[width=0.8\textwidth]{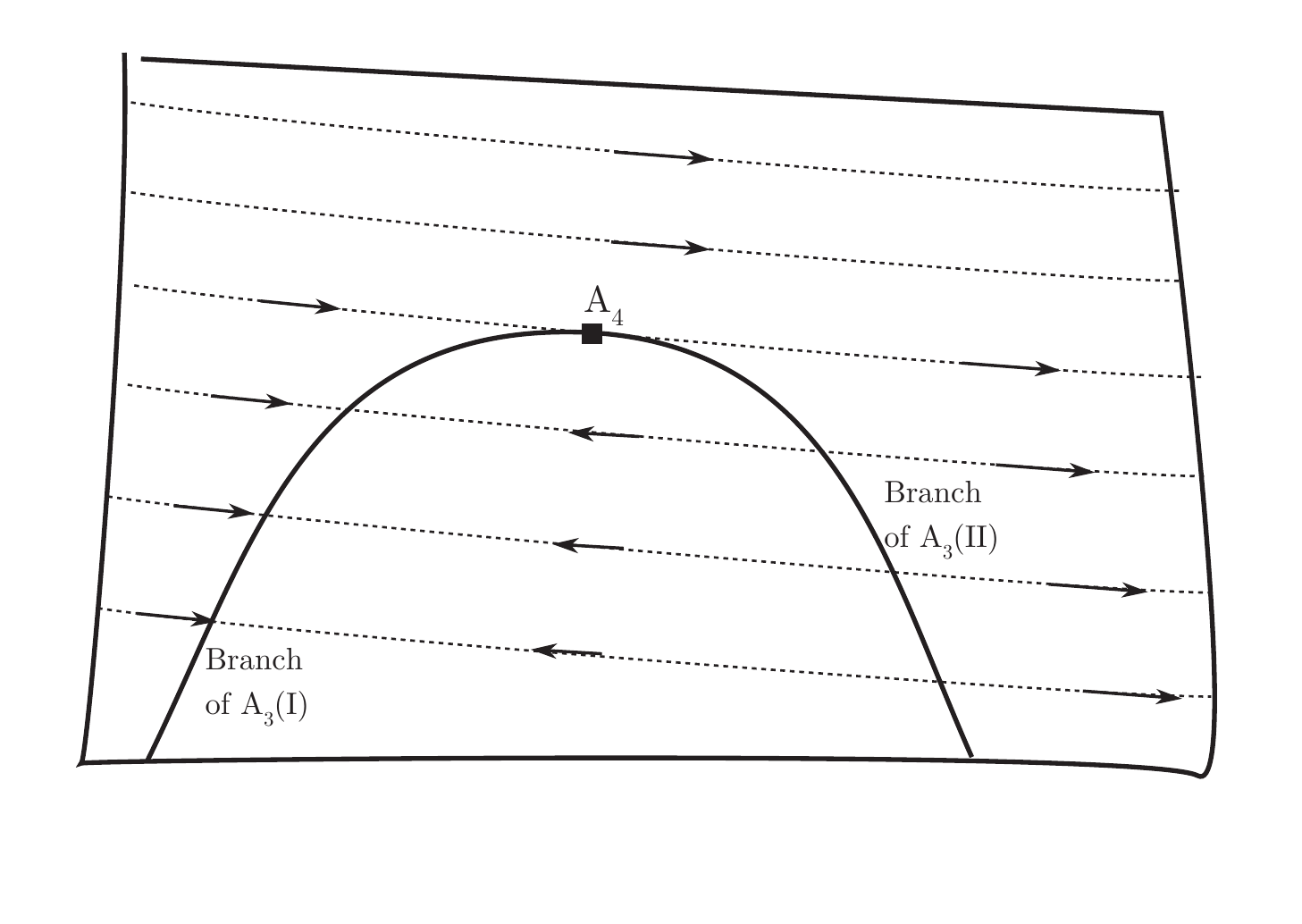}
 \caption{The
distribution $D$ and the CDCs at the conjugate points near an $A_{4}$
point.}
 \label{figure: near an A4 point}
\end{figure}

As we saw, $A_3(I)$ (resp $A_3(II)$) points have neighborhoods without $A_3(II)$
(resp $A_3(I)$) points. The $A_{4}$ point splits $\mathcal{A}_{3}$ into two
branches, and it can be shown easily that they must be of different types.
Composing with the coordinate change ($x_{1} ,x_{2} ,x_{3} ) \rightarrow
(-x_{1} ,x_{2} ,x_{3}$) if necessary, we can assume that the CDCs travel in
the directions shown in figure \ref{figure: near an A4 point}.

\subsubsection{$D_{4}^{-}$ points}

In a neighborhood $O$ of adapted coordinates near a $D_{4}^{-}$ point,
$\mathcal{C}$ is a cone given by the equations $0=-x_{1}^{2} -x_{2}^{2}
+x_{3}^{2}$. The kernel of $d  e_{1}$ at the origin is the plane $x_{3} =0$,
which intersects this cone only at ($0, \ldots ,0$). Three generatrices of the
cone consist of $A_{3}$ points (they are given by the equations $x_{2} =0$,
$x_{1} -x_{3} =0$ and $2x_{1} +x_{3}=0$, plus the equation of the cone), and the
rest of the points are $A_{2}$.

The radial vector field ($r_{1} ,r_{2} ,r_{3}$) at the origin must lie within
the solid cone $-r_{1}^{2} -r_{2}^{2} +r_{3}^{2} >0$, because the number of
conjugate points (counting multiplicities) in a radial line through a point
close to ($0,0,0$), must be $2$. In particular, $|r_{3} |>0$. Composing with
the coordinate change ($x_{1} ,x_{2} ,x_{3} ) \rightarrow (-x_{1} ,-x_{2}
,-x_{3}$) to the left and ($x_{1} ,x_{2} ,x_{3} ) \rightarrow (x_{1} ,x_{2}
,-x_{3}$) to the right, if necessary, we can assume that $r_{3} >0$.

The kernel at the origin is contained in the tangent to the hypersurface
$T_{0} = \{ R ( y ) =R ( 0 ) \}$, and the radius always decreases along a CDC.
Thus a CDC starting at a first conjugate point moves away from the origin and
may either hit an $A_{3}$ point, or leave the neighborhood. Thus these points
are not sinks of CDCs starting at points in $V_{1}$.

We now claim that there are three CDCs that start at any $D_{4}^{-}$ point and
flow out of $O$, and three CDCs that flow into any $D_{4}^{-}$ point, but the
latter ones are contained in the set of second conjugate points.

Recall that the $D_{4}^{-}$ point is the origin. We write the radial vector as
its value at the origin plus a first order perturbation:
\[ r=r^{0} +P(x) \]
with $|P(x)|<C|x|$ for some constant $C$.

We will consider angles and norms in $O$ measured in the adapted coordinates
in order to derive some qualitative behavior, even though these quantities do
not have any intrinsic meaning.

We can measure the angle between a generatrix $G$ and $D$ by the determinant
of a vector in the direction of $G$, the radial vector $r$ and the kernel $k$
of $ e_{1}$: the determinant is zero if and only if the angle is zero. The
angle between $k$ and $r$ in this coordinate system is bounded from below, and
the norm of $r$ is bounded close to $1$. Thus if we use unit vectors that span
$G$ and $k$, we get a number $d(x)$ that is comparable to the sine of the
angle between $G$ and the plane spanned by $r$ and $k$. Thus $c|d(x)|$ is a
bound from below to $| \sin ( \alpha )|$, where $\alpha$ is the angle between
$G$ and $D$, for some $c>0$.

The kernel is spanned by $(-x_{1} +x_{3} ,x_{2} ,0)$ if $-x_{1} +x_{3} \neq
0$. The generatrix of $C$ at a point $(x_{1} ,x_{2} ,x_{3} ) \in C$ is the
line through $(x_{1} ,x_{2} ,x_{3} )$ and the origin. So $d$ is computed as
follows:
\[ d(x)= \frac{1}{x_{1}^{2} +x_{2}^{2} +x_{3}^{2}} \left|\begin{array}{ccc}
     x_{1} & x_{2} & x_{3}\\
     -x_{1} +x_{3} & x_{2} & 0\\
     r_{1} & r_{2} & r_{3}
   \end{array}\right| \]

Let us look for the roots of the lower order ($0$-th order) approximation:
\[ d_{0} (x)= \frac{1}{x_{1}^{2} +x_{2}^{2} +x_{3}^{2}}
   \left|\begin{array}{ccc}
     x_{1} & x_{2} & x_{3}\\
     -x_{1} +x_{3} & x_{2} & 0\\
     r^{0}_{1} & r^{0}_{2} & r^{0}_{3}
   \end{array}\right| \]

where $(r^{0}_{1} ,r^{0}_{3} ,r^{0}_{3} )$ are the coordinates of $r^{0}$.

The equation $\left|\begin{array}{ccc}
  x_{1} & x_{2} & x_{3}\\
  -x_{1} +x_{3} & x_{2} & 0\\
  r^{0}_{1} & r^{0}_{2} & r^{0}_{3}
\end{array}\right| =0$ is homogeneous in the variables $x_{1}$, $x_{2}$ and
$x_{3}$, so we can make the substitution $-x_{1} +x_{3} =1$ in order to study
its solutions. We only miss the direction $\lambda (1,0,1)$, where $D$ is not
aligned with $G$ because it consists of $A_{3}$ points.

Points in $C$ now satisfy $1+2x_{1} -x_{2}^{2} =0$, and $d_{0} (x)$ becomes
$p(x_{2} )=- \frac{1}{2}   \hspace{0.25em} ( a - 1 )  x_{2}^{3}  + 
\frac{1}{2}   \hspace{0.25em} b x_{2}^{2}  -  \frac{1}{2}   \hspace{0.25em} (
a + 3 )  x_{2}  +  \frac{1}{2}   \hspace{0.25em} b$, for $a=
\frac{r^{0}_{1}}{r^{0}_{3}}$ and $b= \frac{r^{0}_{2}}{r^{0}_{3}}$ (recall
$r^{0}_{3} >0$). The lines of $A_{3}$ points correspond to $x_{2} =
\frac{-1}{\sqrt{3}}$, $x_{2} = \frac{1}{\sqrt{3}}$, and the third line lies at
$\infty$. We prove that $p$ has three different roots, one in each interval:
($- \infty , \frac{-1}{\sqrt{3}}$), ($\frac{-1}{\sqrt{3}} ,
\frac{1}{\sqrt{3}}$), ($\frac{1}{\sqrt{3}} , \infty$). This follows
immediately if we prove $\lim_{x_{2} \rightarrow - \infty} p(x_{2} )=-
\infty$, $p( \frac{-1}{\sqrt{3}} )>0$, $p( \frac{1}{\sqrt{3}} )<0$ and
$\lim_{x_{2} \rightarrow \infty} p(x_{2} )= \infty$ for all $a$ and $b$ such
that $a^{2} +b^{2} <1$. The first and last one are obvious, so let us look at
the second one. The minimum of
\[ p( \frac{-1}{\sqrt{3}} )= \frac{2 \sqrt{3}}{9} a+ \frac{2}{3} b+ \frac{4
   \sqrt{3}}{9} \]
in the circle $a^{2} +b^{2} \leqslant 1$ can be found using Lagrange
multipliers: it is exactly $0$ and is attained only at the boundary $a^{2}
+b^{2} =1$. The third inequality is analogous.

Thus, there is exactly one direction where $D$ is aligned with $G$ en each
sector between two lines of $A_{3}$ points. Take polar coordinates ($\phi ,r$)
in $C \cap V_{1}$. The roots of $d_{0}$ are transverse, and thus if
$\phi_{0}$ corresponds to a root of $_{} d_{0}$, then at a line in direction
$\phi$ close to $\phi_{0}$, the angle between $D$ and $G$ is at least $c( \phi
- \phi_{0} )+ \eta ( \phi ,r$), for $c>0$ and $\eta ( \phi ,r)=o(r$). If, at a
point in the line with angle $\phi$, and sufficiently small $r>0$, we move
upwards in the direction of $D$ (in the direction of increasing radius), we
hit the line of $A_{3}$ points, not the center. There are two CDCs starting at
each side of every $A_{3}$ point. A continuity argument shows that there must
be one CDC in each sector that starts at the origin (see figure \ref{figure:
near elliptic umbilic}).

\begin{figure} %
\includegraphics[width=0.8\textwidth]{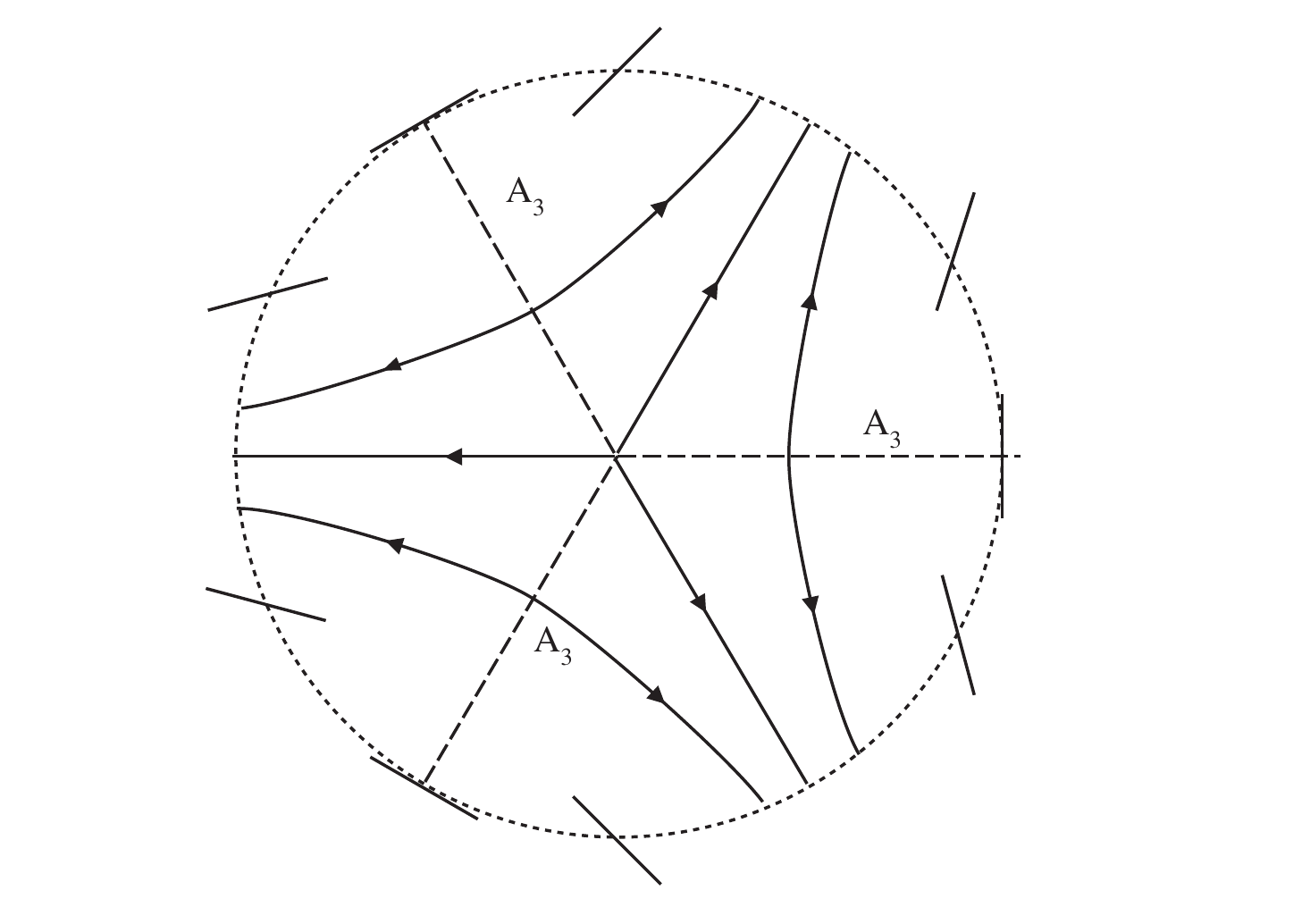}
 \caption{CDCs in the half-cone of first conjugate points near an
elliptic umbilic point, using the chart
$(x_{1} ,x_{2} ) \rightarrow (x_{1} ,x_{2} ,- \sqrt{x_{1}^{2} +x_{2}^{2}} )$,
for $r_{0} =(0,0,1)$.
The distribution $D$ makes half turn as we make a full turn around $x_{1}^{2}
+x_{2}^{2} =1$, spinning in
the opposite direction.}
 \label{figure: near elliptic umbilic}
\end{figure}

Reversing the argument, we see that there are three CDCs that descend into the
elliptic umbilic point, one in each sector, all contained in the the set of
second conjugate points.

\subsubsection{$D_{4}^{+}$ points}

The conjugate points in a neighborhood of adapted coordinates lie in the
cone $C$ given by $0=x_{1} x_{2} -x_{3}^{2} = \frac{1}{4} (x_{1} +x_{2} )^{2}
- \frac{1}{4} (x_{1} -x_{2} )^{2} -x_{3}^{2}$. This time, the kernel of $d \e$
at the origin intersects this cone in two lines through the origin, and the
inside of the cone $x_{1} x_{2} -x_{3}^{2} >0$ is split into two parts. There
is one line of $A_{3}$ points, the generatrix of the cone with parametric
equations: $t \rightarrow (t,t,t)$.

The radial vector at $r=(r_{1} ,r_{2} ,r_{3} )$ must lie within the solid cone
$r_{1} r_{2} -r_{3}^{2} >0$, for the same reason as above. Composing with the
coordinate change $(x_{1} ,x_{2} ,x_{3} ) \rightarrow (-x_{1} ,-x_{2} ,-x_{3}
)$ to the left and $(x_{1} ,x_{2} ,x_{3} ) \rightarrow (x_{1} ,x_{2} ,-x_{3}
)$ to the right, if necessary, we can assume that $r_{1} >0$ and $r_{2} >0$.

We write the radial vector as its value at the origin plus a first order
perturbation:
\[ r=r^{0} +P(x) \]
with $|P(x)|<C|x|$ for some constant $C$.

As before, the radius decreases along a CDC, but this time, a CDC starting at
a first conjugate point might end up at the origin. Let $F$ be the half cone
of first conjugate points (given by the equations $x_{1} x_{2} =x_{3}^2$ and
$\frac{1}{2} (x_{1} +x_{2} ) <0$). Let $F_{+}$ be the points of $F$ with
radius greater than the origin. Its tangent cone at the origin is $F \cap
\{x_{3} <0\}$ or $F \cap \{x_{3} >0\}$, depending on the sign of the third
coordinate of $r_{0}$.

As in the previous case, we can measure the angle between a generatrix $G$ and
$D$ by the determinant of a vector in the direction of $G$, the radial vector
$r$ and the kernel $k$ of $ e_{1}$. This time, the kernel is spanned by
$(-x_{3} ,x_{1} ,0)$ in the chart $x_{1} \neq 0$.
$$ d(x)= \frac{1}{x_{1}^{2} +x_{2}^{2} +x_{3}^{2}} \left|\begin{array}{ccc}
     x_{1} & x_{2} & x_{3}\\
     -x_{3} & x_{1} & 0\\
     r_{1} & r_{2} & r_{3}
   \end{array}\right|
$$

Again, we look for the roots of the lower order ($0$-th order) approximation,
which is equivalent to looking for the zeros of:
\[ \tilde{d} (x)= \left|\begin{array}{ccc}
     x_{1} & x_{2} & x_{3}\\
     -x_{3} & x_{1} & 0\\
     a & b & 1
   \end{array}\right| \]
in the cone $C$, for $a= \frac{r^{0}_{1}}{r^{0}_{3}}   \tmop{and}   b=
\frac{r^{0}_{2}}{r^{0}_{3}}$. We can make the substitution $x_{1} =-1$ in
order to study the zeros of the polynomial (we choose $x_{1} <0$ because we
are interested in the half cone of first conjugate points). This implies
$x_{2} =-x_{3}^{2}$ for a point in $C$, and we are left with $p(x_{3}
)=-x_{3}^{3} -bx_{3}^{2} +ax_{3} +1=0$. If $b^{2} +3a>0$, $p$ has two critical
points $\frac{-b \pm \sqrt{b^{2} +3a}}{3}$, otherwise it is monotone
decreasing. But even when $p$ has two critical points, the local maximum may
be negative, or the local minimum positive, with one real root.

The vector $r^{0}$ must satisfy $r_{3}^{0} \neq 0$ and $x_{1} x_{2} -x_{3}^{2}
>0$, or $ab>1$. There are two {\emph{chambers}} for $r^{0}$: $r_{3}^{0} >0$
and $r_{3}^{0} <0$. We will say that a $D_{4}^{+}$ point such that $r_{3}^{0}
>0$ (resp, $r_{3}^{0} <0$) is of type I (resp, type II).

If $r_{3}^{0} >0$ (or $a,b>0$), then $r^{0}$ and $L \cap F$ lie at opposite
sides of the kernel of $d  e_{1}$ at the origin. The cubic polynomial $p$ has
limit $\mp \infty$ at $\pm \infty$, \ and $p(0)>0$. The line of $A_{3}$ points
intersects $x_{1} =-1$ at $x_{3} =-1$. We check that $p(x_{3} =-1)=2-a-b$ is
always negative in the region $a>0$, $b>0$, $ab>1$. Thus there is exactly one
positive root, and two negative ones, one at each side of the line of $A_{3}$
points. This corresponds to the top right picture in figure \ref{figure:
hyperbolic umbilic}, where the $x_{3}$ axis is vertical, and the CDCs descend,
because $r_{3}^{0} >0$.

\begin{figure}%
\resizebox{\textwidth}{!}{
  \begin{tabular}{ll}
    \includegraphics{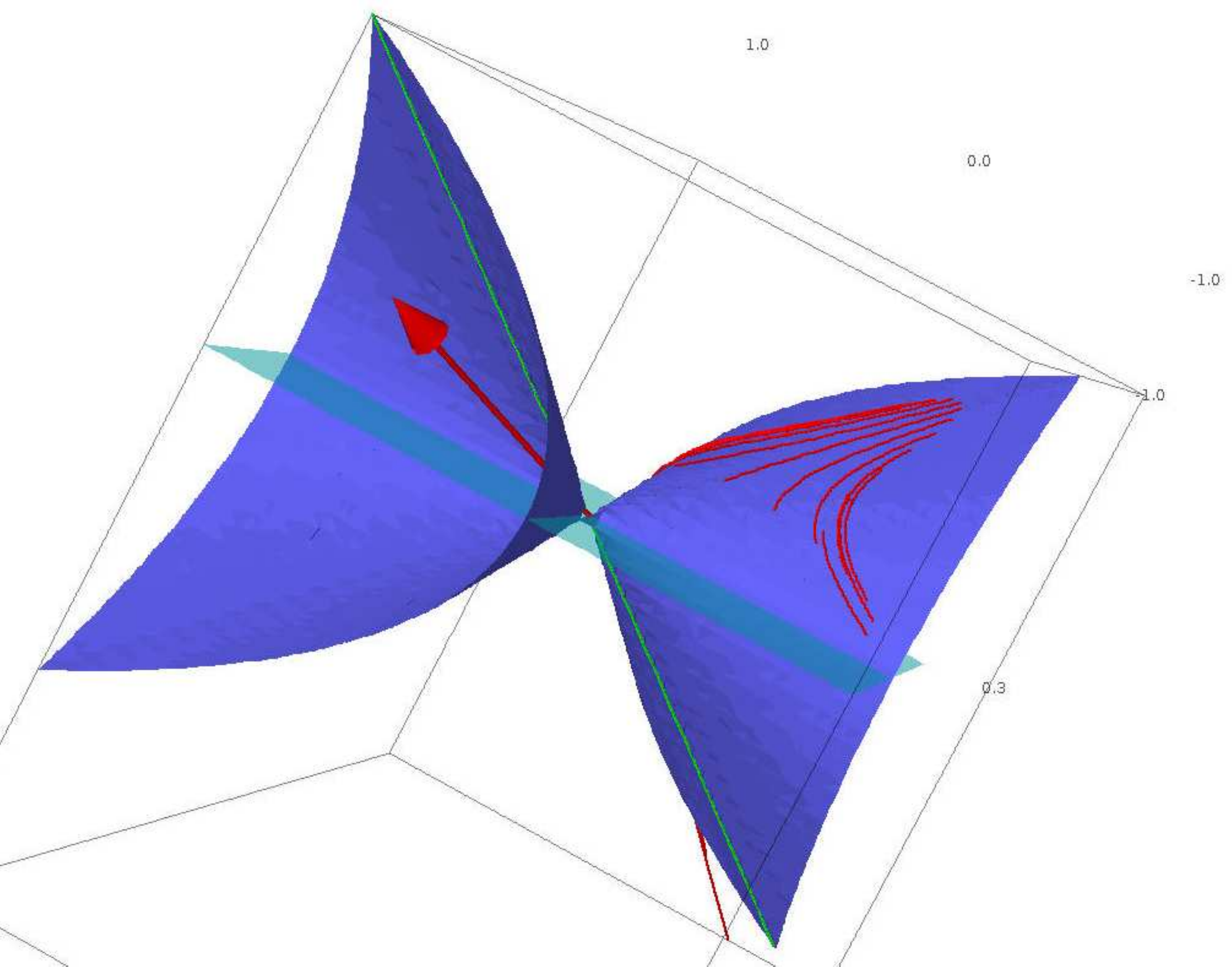} &
    \includegraphics{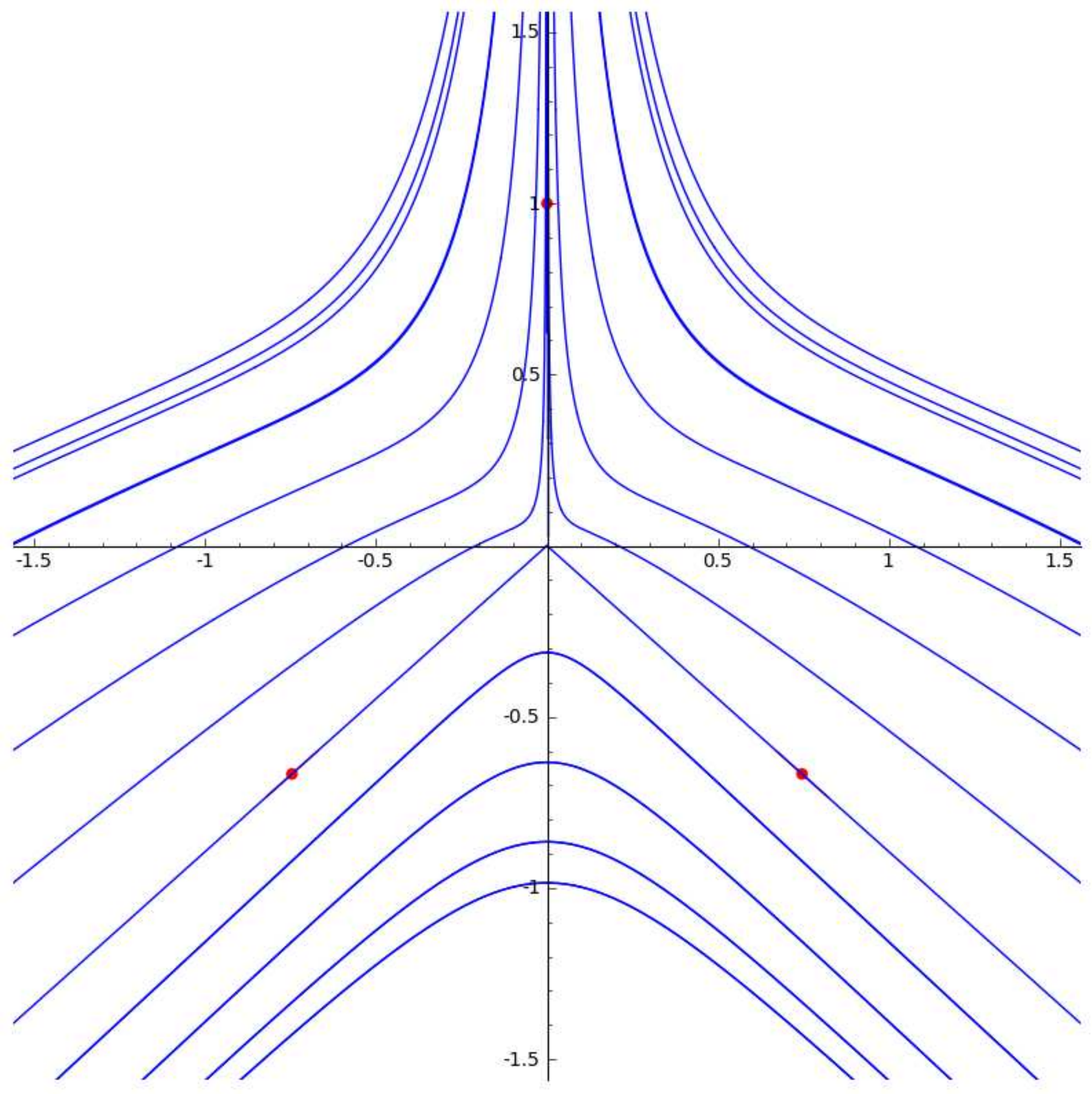}\\
    & \\
    \includegraphics{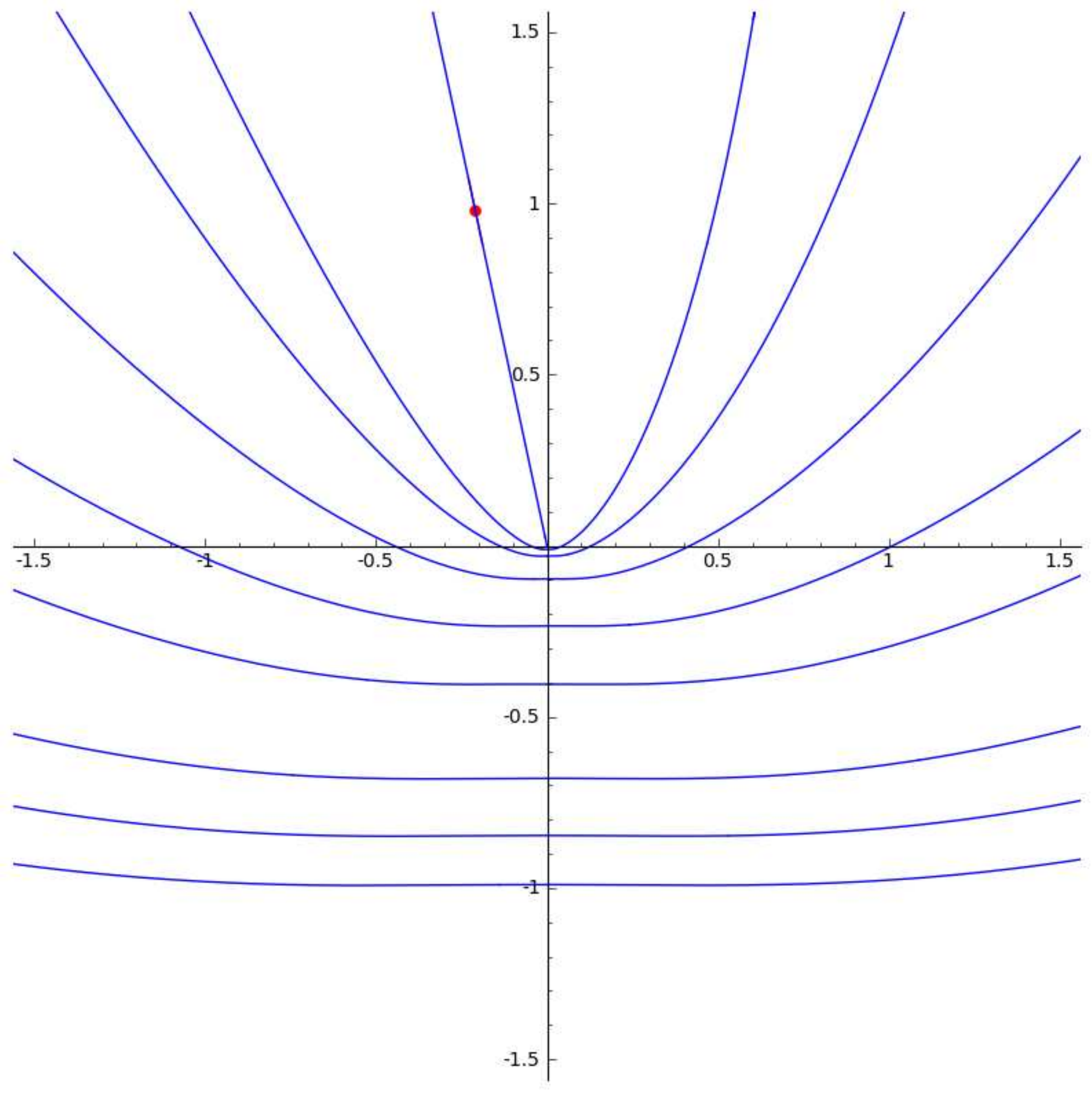} &
    \includegraphics{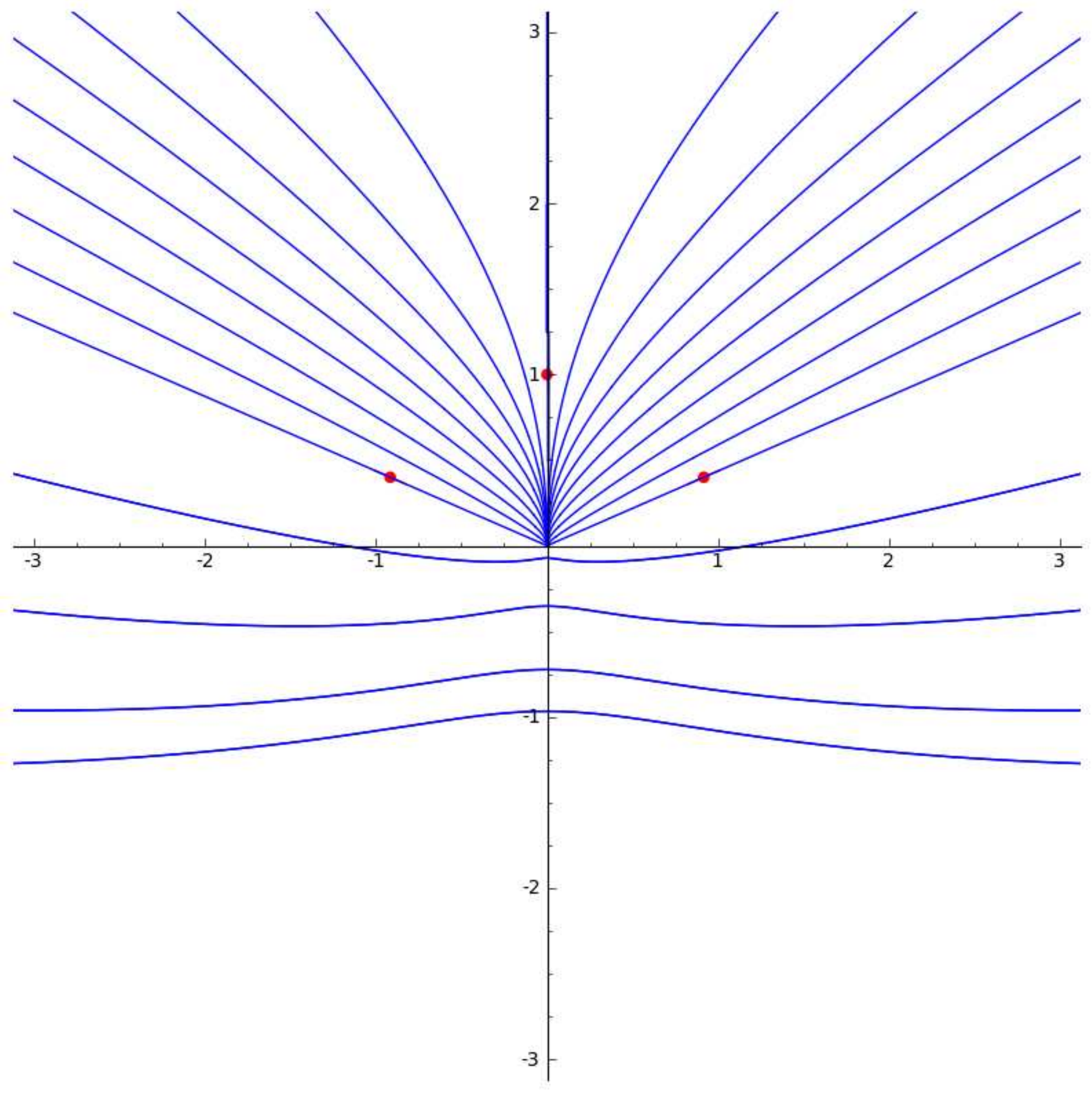}
  \end{tabular}}
  \caption{An hyperbolic umbilic point.}
  \label{figure: hyperbolic umbilic}
\begin{minipage}[b]{0.9\linewidth}  %
\mbox{}\\[-.5\baselineskip] %
\paragraph{Explanation of figure \ref{figure: hyperbolic umbilic}}
 In the \strong{TopLeft} corner, the cone $C$ appears in blue, the line of
$A_{3}$ points in green,
  the radial vector at the origin in red, and the CDCs in red.

  The other pictures show the CDCs in the parametrization of the half cone of
  first conjugate points, obtained by projecting onto the plane spanned by
  $(1,-1,0)$ and $(0,0,1)$. The red dots indicate the directions where $D$ is
  parallel to the generatrix of the cone. The $A_{3}$ points lie in the half
  vertical line with $x_{3} <0$.
  \begin{description}
   \item[TopRight] $a>0$, $b>0$.
   \item[BottomLeft]$a<0$, $b<0$, $p$ has only one real root.
   \item[BottomRight]$a<0$, $b<0$, $p$ has three distinct real roots.
  \end{description}
\end{minipage}  %
\end{figure}  %

The positive root gives a direction that is tangent to a CDC that enters into
the $D^{+}_{4}$ point, but moving to a nearby point we find CDCs that miss the
origin, and approach either of the two CDCs that depart from the origin,
corresponding to the negative roots of $p$.

However, if $r_{3}^{0} <0$ (type II), $p$ may have one or three roots.
We revert the direction of the CDC taking $p(z)=p(-x_3)$.
We note that $p(0)=1>0$, and $p' (z)>0$ for $z>0$, $a<0$ and $b<0$,
so there cannot be any positive root. A CDC starting at a point in $F$ flows
away from the stratum of $A_{3}$ points and out of the neighborhood (see the
bottom pictures at figure \ref{figure: hyperbolic umbilic}). It can be checked
by example that both possibilities do occur.

We want to remark that if there are three roots, the $D_{4}^{+}$ point is the
endpoint of the CDCs starting at any point in a set of positive
$\mathcal{\mathcal{H}}^{2}$ measure. Fortunately, all these points are second
conjugate points. This is the main reason why we build the synthesis as a
quotient of $V_{1}$ rather than all of $\T$ but more important: this is a hint
of the kind of complications we might find in arbitrary dimension, or for an
arbitrary metric, where we cannot list the normal forms and study each
possible singularity separately.

\begin{remark}
  In order to find out the number of real roots of $p$, for any value of $a$ and $b$, we used Sturm's method. 
  However, once we found out the results, we found alternative proofs and did not need to mention Sturm's method in the proof. 
  The precise boundary between the sets of $a,b$ such that $p$ has one or three real roots is found by Sturm's method. 
  It is given by:
  
  $p_{3} =-9  \hspace{0.25em} a^{2}  b^{2}  - 36  \hspace{0.25em} a^{3}  - 36 
  \hspace{0.25em} b^{3}  - 162  \hspace{0.25em} a b + 243=0$
\end{remark}

\subsection{Proof of lemma \ref{typical sings have transient neighborhoods}}

Let $x \in V_{1}$ be a point and $O$ be a cubical neighborhood of adapted coordinates around it.
$S$ will be a ``small enough'' subset of $O$:
\begin{description}
  \item[$NC$] The algorithm reports success! in one step for any non-conjugate point, so any $S
  \subset O$, such that $O$ has no conjugate points, satisfies the claim.
  The gain is the infimum of the empty set, $+\infty$, so the pair is positive.
  
  \item[$A_{2}$] The CDC $\alpha_{0}$ starting at $x_{0}$ that reaches
  $\partial O$ has a length $\varepsilon >0$. For $x$ in a sufficiently small
  neighborhood $S$ of $x_{0}$, there is a GACDC $\alpha$ that reaches $y \in
  \partial O$ and has length at least $\varepsilon /2$.
  
  If there is an aspirant curve that starts with $\alpha$, and later has a
  retort of $\alpha$, starting at a point $z$, then $|z|<|y|$, because the
  restriction of the curve from $y$ to $z$ is a linking curve.
  
  Further, $\alpha_{0}$ is unbeatable, so that any non-trivial retort of this
  short curve will increase the radius at most $|x|-|y|- \delta$ for some
  $\delta >0$. The inequality still holds with $\delta /2$ if instead of
  $\alpha_{0}$ we have a GACDC starting at some $x$ in a small enough
  neighborhood $V$ of $x_{0}$.
  
  So if we take $S$ as the intersection of $V$ and a ball of radius $\delta
  /2$, then $( S,O )$ is transient, and the gain is at least $\delta/2$.
  
  Any GACDC contained in $O$ is the graph over any ACDC of a Lipschitz function with derivative bounded by $c$, so it has finite length.
  It follows that the pair is bounded.
  
  \item[$A_{3}$] We recall that the set of singular points $\mathcal{C}$ near
  an $A_{3}$ point is an hypersurface, and the stratum of $A_{3}$ points is a
  smooth curve. An ACDC starting at any $A_{2}$ point will flow either into
  the stratum of $A_{3}$ points transversally (within $\mathcal{C}$), or into
  the boundary of $O$.
  
  For points in a smaller neighborhood $V \subset O$, one of the following
  things happen:
  \begin{itemize}
    \item If an ACDC starting at $x \in V \cap \mathcal{A}_{2}$ flows into an
    $A_{3}$ point, then it can be replied in one step, and the algorithm
    stops. The algorithm also stops if $x \in \mathcal{A}_{3}$.
    
    \item If the ACDC starting at $x \in V \cap \mathcal{A}_{2}$ flows into $y
    \in \partial O$, the argument is the same as that for an $A_{2}$ point.
  \end{itemize}
  The length of any GACDC in $O$ is bounded for the same reason as for $A_2$ points, and this is enough to bound aspirant curves contained in $O$.
  \item[$A_{4}$] Near an $A_{4}$ point, $\mathcal{C}$ is a smooth hypersurface
  and $\mathcal{A}_{3}$ is a smooth curve sitting inside $\mathcal{C}$. The
  $A_{4}$ point is isolated and splits the curve $\mathcal{A}_{3}$ into two
  parts. One of them, which we call Branch I, consists of $A_{3} ( I )$
  points, and the other branch consists of $A_3(II)$ points. The conjugate
  distribution $D$ coincides with the kernel of $\e$ at the $A_{4}$ point, and
  is contained in the tangent to the manifold of $A_{3}$ points.
  
  As we saw before, a CDC that ends up in the $A_{4}$ point can be perturbed
  so that it either hits an $A_{3}$ point, or leaves the neighborhood.
  
  Let $H$ be the set of points such that the CDC starting at that point flows
  into the $A_{4}$ point. $H$ is a smooth curve, and splits $U$ into two
  parts. One of them, $U_{1}$, contains only $A_{2}$ points, while the other,
  $U_{2}$, contains all the $A_{3}$ points.
  
  \begin{figure}[ht]
    \includegraphics[width=0.8\textwidth]{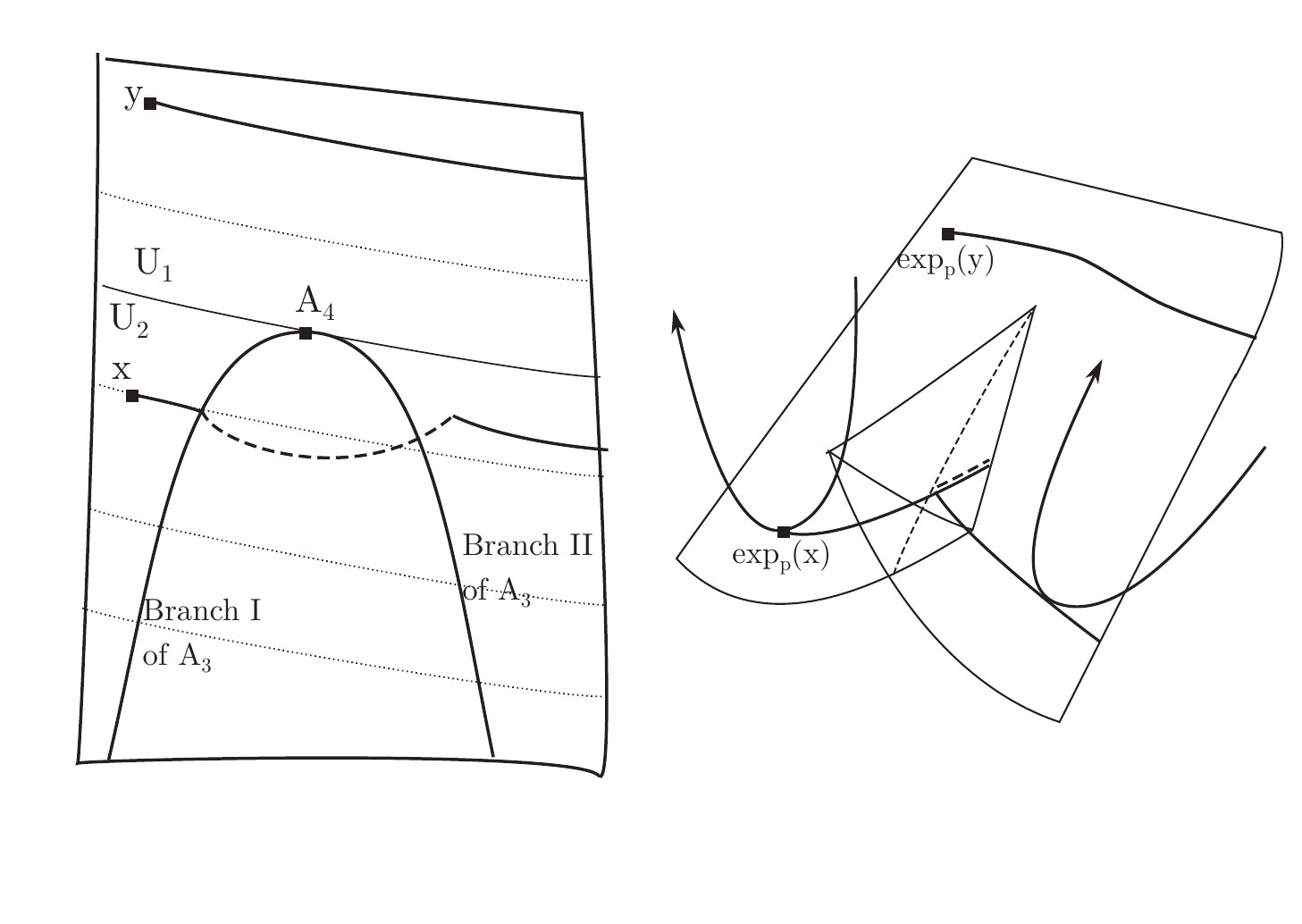}
    \caption{\label{figure: linking curves near A4 points}This picture shows a
    neighborhood of an $A_{4}$ point in $\T$, together with the linking curves
    that start at $x$ and $y$ (to the left) and the image of the whole sketch
    by $\e$ (to the right).}
  \end{figure}
  
  Look at figure \ref{figure: linking curves near A4 points}: a CDC starting
  at a point $y \in U_{1}$ flows into the boundary of $U$ without meeting any
  obstacle. A CDC $\alpha$ starting at a point $x \in U_{2}$, however, flows
  into the branch I of $\mathcal{A}_{3}$. We can start a retort $\beta$ at
  that point, but it will get interrupted when $\exp \circ \beta$ reaches the
  stratum of the queue d'aronde that is the image of two strata of $A_{2}$
  points meeting transversally. The retort cannot go any further because only
  the points ``above $\exp ( \mathcal{C} )$'' (the side of ) have a preimage,
  and points in the main sheet of $\exp ( \mathcal{C} )$ have only one
  preimage, that is $A_{2}$. When he hit the stratum of $A_{2}$ points, we
  follow a CDC to get a curve that leaves the neighborhood in a similar way as
  the curve starting at $y$ did.
  
  \item[$D_{4}$] Any CDC starting at any point in a neighborhood of a
  $D^{-}_{4}$, or $D_{4}^{+}$ of type I point leaves the neighborhood without
  meeting other singularities. A nearby GACDC will also do. We only have to
  worry about the one CDC that flows into the $D_{4}^{+}$ of type II, but we
  always take a nearby GACDC that avoids the center.
\end{description}

\section{Further questions}\label{section: beyond}

We have proposed a new strategy for proving the Ambrose conjecture.
If our only goal had been to prove that the Ambrose conjecture holds for a generic family of metrics, we could have simplified the definitions of unequivocal point and linked points.
We have chosen the definitions so that the sutured property does not exclude some common manifolds.

There is a weaker form of the sutured property that may be simpler to prove, allowing for a remainder set K consisting of points that are neither unequivocal nor linked to an unequivocal point, but such that the Hausdorff dimension of $e_1(\mathcal{K})$ is smaller than $n-2$.
We have decided not to include it here, but the reader can find details in chapter 6.5 of \cite{Angulo Tesis}.

\subsection{Bounding the length of the linking curves}

It doesn't seem likely that a uniform bound can be found for the lengths of the FCLCs built with the linking curve algorithm.
Let us show how a naive argument for bounding the length fails at giving a uniform bound.

Let $B_R$ be the maximum length of a linking curve starting at a point $x$ of radius $R$.
The algorithm starting at $x$ first adds a GACDC $\alpha$ of length $l$ that leaves a transient neighborhood $U$ of $x$.
The next iterations of the algorithm add a linking curve $\beta$ at the tip of $\alpha$, and it only remains to reply to $\alpha$.
If this could be done in one step, we would have:
$$
B_R < 2l + B_{R-\varepsilon}
$$
but unfortunately, $\alpha =  \alpha_{1} \ast \ldots \ast \alpha_{k}$ might cross $\SAtwo$ $k$ times. After adding $\beta$ to the tip of $\alpha$, we can always reply to $\alpha_k$, but then we may have to iterate the algorithm until we add a linking curve starting at the tip of $\alpha_{k-1}$ before we can reply to the $\alpha_{k-1}$.
This means we may have to add $k$ linking curves, and our bound is only:
$$
B_R < 2l + k\cdot B_{R-\varepsilon}
$$
This is of little use unless we can bound $k$.

However, it may be enough to find a uniform bound of the composition of the linking curve with $e_1$.
Then a metric can be approximated by generic ones, obtaining sequences of linking curves for the approximate metrics, and then using \cite[Lemma 4.2]{Hambly Lyons}, for instance.

\end{document}